\numberwithin{equation}{section}
\theoremstyle{plain}
\newtheorem{main}{Theorem}
\newtheorem{mcor}[main]{Corollary}
\newtheorem{theorem}{Theorem}[section]
\newtheorem{claim}[theorem]{Claim}
\newtheorem{lemma}[theorem]{Lemma}
\newtheorem{proposition}[theorem]{Proposition}
\newtheorem{corollary}[theorem]{Corollary}
\theoremstyle{definition}
\newtheorem{definition}[theorem]{Definition}
\newtheorem*{definition*}{Definition}
\newtheorem{example}[theorem]{Example}
\newtheorem{notation}[theorem]{Notation}
\newtheorem{remark}[theorem]{Remark}
\begin{document}

\title[Embedding universality for  II$_1$ factors with property (T)]
{Embedding universality for II$_1$ factors with property (T)}

\author[I. Chifan]{Ionut Chifan}
\address{14 MacLean Hall, Department of Mathematics, The University of Iowa, IA, 52242, U.S.A.}\email{ionut-chifan@uiowa.edu}

\author[D. Drimbe]{Daniel Drimbe}
\address{Department of Mathematics, KU Leuven, Celestijnenlaan 200b, B-3001 Leuven, Belgium}\email{daniel.drimbe@kuleuven.be}

\author[A. Ioana]{Adrian Ioana}
\address{Department of Mathematics, University of California San Diego, 9500 Gilman Drive, La Jolla, CA 92093, USA}\email{aioana@ucsd.edu}

{\thanks{I.C. was partially supported by NSF FRG Grant \#1854194; D.D. was supported by the postdoctoral fellowship fundamental research 12T5221N of the Research Foundation Flanders; A.I. was partially supported by NSF FRG Grant \#1854074.}}
\begin{abstract} 
We prove that every separable tracial von Neumann algebra embeds into a II$_1$ factor with property (T) which can be taken to have trivial outer automorphism and fundamental groups. We also establish an analogous result for the trivial extension over a non-atomic probability space of every countable p.m.p. equivalence relation. These results are obtained by using the class of wreath-like product groups introduced recently in \cite{CIOS21}.
 \end{abstract}

\maketitle

\section{Introduction and statement of main results}

Property (T), introduced by Kazhdan in \cite{Ka67}, is a fundamental representation-theoretic property of groups which has a wide spectrum of applications to several areas including  ergodic theory, group theory and operator algebras.
In operator algebras, it was first used by Connes in \cite{Co80} to prove that the II$_1$ factor $\text{L}(G)$ arising from a countable icc property (T) group $G$ has countable outer automorphism and fundamental groups.
 Connes and Jones then defined a notion of property (T) for II$_1$ factors and showed that a group II$_1$ factor $\text{L}(G)$ has property (T) if and only if  $G$ does \cite{CJ85}. In the last 20 years, property (T) has been instrumental in the advances made in the study of von Neumann algebras via Popa's deformation/rigidity theory (see the surveys \cite{Po06,Va10,Io18}).

Olshanskii  \cite{Ol95} and Delzant  \cite{De96} proved that every non-elementary hyperbolic group $H$ is {\it SQ-universal}: every countable group embeds into a quotient of $H$.
Since property (T) passes to quotients, taking $H$ to be a hyperbolic group with property (T) implies that every countable group embeds into a countable group with property (T),  as noted right after \cite[Th\'{e}or\`{e}me 3.5]{De96}. 

Our main result establishes an analogue of this fact for II$_1$ factors. Moreover, we have:

\begin{main}\label{A}
Any separable tracial von Neumann algebra $(M,\tau)$ embeds into a II$_1$ factor with property (T). Moreover,  the following hold:

\begin{enumerate}

\item For every acylindrically hyperbolic group $H$, $M$ embeds into a II$_1$ factor $Q$ which is generated by a representation $\pi:H\rightarrow\mathcal U(Q)$. Thus, if $H$ has property (T), then $Q$ has property (T).
\item $M$ embeds into a property (T) II$_1$ factor $P$ with $\emph{Out}(P)=\{e\}$ and $\mathcal F(P)=\{1\}$. 
\end{enumerate}
\end{main}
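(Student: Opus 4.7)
The plan is to combine the wreath-like product construction from \cite{CIOS21} with the Dehn-filling / small cancellation theory for acylindrically hyperbolic groups (Dahmani-Guirardel-Osin). Given a separable tracial $(M, \tau)$ and an acylindrically hyperbolic group $H$, I would first, after replacing $M$ by the II$_1$ factor $M * L(\F_\infty)$ if necessary, choose a finitely generated group $\Lambda$ (e.g.\ $\Lambda = \F_2$) together with a homomorphism $\iota \colon \Lambda \to \mathcal U(M)$ whose image generates $M$ as a von Neumann algebra; this is possible because every separable II$_1$ factor is generated by two unitaries.

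More precisely, the wreath-like product framework should produce a countable group $G$, itself a quotient of $H$ via some $\eta \colon H \twoheadrightarrow G$, fitting into a short exact sequence
\[
1 \longrightarrow \Lambda^{(I)} \longrightarrow G \longrightarrow H_0 \longrightarrow 1,
\]
where $H_0$ is a quotient of $H$ and $I$ is an $H_0$-set with infinite orbits and small stabilizers, so that the associated generalized Bernoulli shift is sufficiently mixing. Set $Q := M^{\bar\otimes I} \rtimes H_0$ with $H_0$ acting by the generalized Bernoulli shift permuting tensor factors. A unitary representation $\tilde\pi \colon G \to \mathcal U(L^2(Q))$ can then be built by letting the base $\Lambda^{(I)}$ act on $M^{\bar\otimes I}$ via the $I$-fold tensor product of $\iota$, and letting $H_0$ act via the canonical unitaries of the crossed product. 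These combine into a well-defined representation of $G$ because the $H_0$-conjugation on $\Lambda^{(I)}$ is compatible with its permutation of tensor factors. Since $\iota(\Lambda)'' = M$, we obtain $\tilde\pi(G)'' = Q$, and hence $\pi := \tilde\pi \circ \eta$ satisfies $\pi(H)'' = Q$, while $M$ sits inside $Q$ as the tensor factor at the identity coset.

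The property-(T) conclusion of part (1) then follows from the standard fact that a von Neumann algebra generated by a unitary representation of a Kazhdan group inherits property (T), by the matrix-coefficient argument of Connes-Jones. The main technical hurdle is the first step, namely producing the wreath-like quotient $G$ of $H$ with $\Lambda$ as the prescribed base; this is the group-theoretic heart of \cite{CIOS21} and rests on Osin's cocycle-type Dehn surgery together with iterated HNN/amalgamation constructions. For part (2), one would specialize $H$ further (for instance to a torsion-free hyperbolic Kazhdan group with trivial outer automorphism group, produced by a rigid quotient construction) and then invoke the W$^*$-superrigidity computations for wreath-like product factors from \cite{CIOS21} to establish $\emph{Out}(P) = \{e\}$ and $\mathcal F(P) = \{1\}$ for the resulting factor $P$.
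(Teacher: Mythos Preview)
Your overall strategy---realize the quotient of $H$ as a wreath-like product $G\in\mathcal W\mathcal R(\Lambda,H_0)$ and then push the generating representation $\iota:\Lambda\to\mathcal U(M)$ through the wreath-like structure---is exactly the one the paper uses. However, there is a genuine gap in the way you build $Q$ and $\tilde\pi$.

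You write $Q=M^{\bar\otimes I}\rtimes H_0$ as an \emph{ordinary} crossed product by the Bernoulli shift and then claim that the pieces $\iota^{(I)}:\Lambda^{(I)}\to\mathcal U(M^{\bar\otimes I})$ and $h\mapsto u_h$ ``combine into a well-defined representation of $G$ because the $H_0$-conjugation on $\Lambda^{(I)}$ is compatible with its permutation of tensor factors.'' This only works if the extension $1\to\Lambda^{(I)}\to G\to H_0\to 1$ \emph{splits}, and for the groups of interest (in particular any $G$ with property (T)) it does not: the split case is the standard wreath product $\Lambda\wr H_0$, which never has property (T). In general there is no $H_0$-action on $\Lambda^{(I)}$, only a cocycle action $(\alpha,v)$ once a set-theoretic section is chosen, and moreover $\alpha_b$ is not the pure shift $\sigma_b$ on $\Lambda^{(I)}$ but rather $\mathrm{Ad}(\rho_b)\circ\sigma_b$ for some $\rho_b$ lying in the \emph{unrestricted} product $\Lambda^{H_0}$. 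This is the content of the paper's Lemma~\ref{special}, and it is exactly what is needed to promote $\iota$ to a cocycle action $(\beta^\iota,w^\iota)$ of $H_0$ on $M^{H_0}$ (Proposition~\ref{extend}); the correct target is the \emph{cocycle} crossed product $Q=M^{H_0}\rtimes_{\beta^\iota,w^\iota}H_0$, and only then does $\tilde\pi:G\to\mathcal U(Q)$ exist. Without this step your map $\tilde\pi$ is not a homomorphism.

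Two smaller points. First, the assertion that ``every separable II$_1$ factor is generated by two unitaries'' is the open generator problem; the paper sidesteps it by passing to $(M*\mathrm{L}(\mathbb F_2))\overline\otimes R$, which is McDuff and hence $3$-generated by \cite{GP98}, and takes $\Lambda=\mathbb F_3$. Second, for part~(2) the rigidity results of \cite{CIOS21} are stated for group factors $\mathrm{L}(G)$, not for the generalized wreath-like von Neumann algebras $Q$ above, so the paper has to redo the deformation/rigidity argument in this setting (Theorem~\ref{out}); it is not a black-box citation.
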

For a II$_1$ factor $P$, we denote by $\text{Out}(P)=\text{Aut}(P)/\text{Inn}(P)$ the {\it outer automorphism group of $P$} and by $\mathcal F(P)=\{\tau(e)/\tau(f)\mid\text{$e,f\in P$ projections, $ePe\cong fPf$}\}$
the {\it fundamental group of $P$} \cite{MvN43}.

Before putting Theorem \ref{A} into context, we note  that it leads to a characterization of separability for tracial von Neumann algebras. Indeed, since II$_1$ factors with property (T) are finitely generated by \cite{Po86} and thus separable, the following result is an immediate consequence of Theorem \ref{A}:

\begin{mcor}\label{B}
A tracial von Neumann algebra $(M,\tau)$ is separable if and only if it embeds into a II$_1$ factor with property (T).
\end{mcor}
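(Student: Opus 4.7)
The corollary has two directions, and both are short. For the forward implication, if $(M,\tau)$ is separable, then the first sentence of Theorem \ref{A} already provides a II$_1$ factor with property (T) into which $M$ embeds, and there is nothing further to check. So the interesting content is the converse, and my plan is to reduce it to a known finite-generation result.

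The converse I would argue as follows. Suppose $M$ embeds (trace-preservingly, as a unital $*$-subalgebra) into a II$_1$ factor $P$ with property (T). By Popa's theorem \cite{Po86}, every II$_1$ factor with property (T) is finitely generated as a von Neumann algebra. In particular, $P$ admits a countable $\|\cdot\|_2$-dense $*$-subalgebra (take the $*$-algebra generated over $\mathbb{Q}(i)$ by a finite generating set), so $L^2(P,\tau)$ is a separable Hilbert space, and consequently $P$ itself is separable in the usual sense for tracial von Neumann algebras. Identifying $M$ with its image in $P$, the inclusion of $L^2$-spaces $L^2(M,\tau)\subseteq L^2(P,\tau)$ exhibits $L^2(M,\tau)$ as a closed subspace of a separable Hilbert space, hence separable; this gives separability of $M$.

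There is no real obstacle here: the only nontrivial ingredient beyond Theorem \ref{A} is Popa's finite generation theorem for property (T) II$_1$ factors, and the rest is a routine hereditary argument. The point of recording Corollary \ref{B} is simply to observe that, in view of Theorem \ref{A}, the class of II$_1$ factors with property (T) is exactly wide enough to serve as a universal receiving class for separable tracial von Neumann algebras, and no wider.
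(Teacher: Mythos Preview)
Your proof is correct and matches the paper's own argument essentially line for line: the forward direction is Theorem \ref{A}, and the converse invokes Popa's finite-generation theorem for property (T) II$_1$ factors \cite{Po86} to conclude separability of the ambient factor and hence of $M$. The paper phrases the converse more tersely but uses exactly the same ingredients.
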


Theorem \ref{A} implies that the family of separable II$_1$ factors with property (T) is {\it embedding universal}: any separable II$_1$ factor embeds into one with property (T).
This settles in the positive \cite[Question 6.3.21]{AGKE20}.

\begin{remark} (Non)embeddability results for II$_1$ factors (see \cite{PV21} for a survey of such results) lead to examples of (non)embedding universal families.
Embedding universal families include those consisting of all separable II$_1$ factors which have one of the following properties: are McDuff, are single generated, do not have property Gamma, are prime, do not have Cartan subalgebras. Indeed, if $M$ is a separable II$_1$ factor and $R$ is the hyperfinite II$_1$ factor, then $M\overline{\otimes}R$ is McDuff and thus single generated \cite{GP98}, while $M*\text{L}(\mathbb Z)$ is a non-Gamma II$_1$ factor which is prime \cite{Pe06} and has no Cartan subalgebras \cite{Io12}. 
On the other hand, the family of  II$_1$ factors with Haagerup's property \cite{CJ85} and any countable family of separable II$_1$ factors \cite{Oz02} are not embedding universal.
It is open whether the family of all II$_1$ factors $\text{L}(G)$ arising from icc countable groups $G$ is embedding universal.
\end{remark}

\begin{remark}
The Connes Embedding Problem (CEP) asks if every separable II$_1$ factor embeds into the ultrapower, $R^\omega$, of $R$, where $\omega$ is a free ultrafilter on $\mathbb N$ \cite{Co76}. By Theorem \ref{A}, the CEP is equivalent to asking whether every  property (T) II$_1$ factor embeds into $R^\omega$. Thus, a negative answer to the CEP, announced recently in \cite{JNVWY20}, implies the existence of a non-embeddable II$_1$ factor with property (T).
\end{remark}

 Part (1) of Theorem \ref{A} can be viewed as a II$_1$ factor analogue of the SQ-universality of acylindrically hyperbolic groups established in \cite[Theorem 8.1(a)]{DGO11}. We refer to \cite[Section 3.2]{CIOS21} for the definition of acylindrically hyperbolic groups and note  that this class contains all non-elementary hyperbolic groups. In particular, Theorem \ref{A}(1) applies to cocompact lattices $H$ in the rank one simple Lie group $\text{Sp}(n,1)$, $n\geq 2$, as such $H$ are hyperbolic and have property (T). Theorem \ref{A}(1) highlights a striking difference between the type II$_1$ representations of $H$ and those of icc lattices $G$ in higher rank simple Lie groups (e.g. $\text{SL}_m(\mathbb R)$, $m\geq 3$).  Work of Peterson \cite{Pe14} (see also \cite{Be06,BH19}) shows that $\text{L}(G)$ is the only II$_1$ factor generated by a representation of $G$. In contrast, Theorem \ref{A}(1) shows that the family of  II$_1$ factors generated by representations of $H$ is embedding universal.

An immediate consequence of part (2) of Theorem \ref{A}, which is new to our knowledge, is that every separable II$_1$ factor embeds into a separable II$_1$ factor with trivial outer automorphism group.
To further discuss Theorem \ref{A}(2), denote by $\bm{\mathcal {T}}$ the family of all II$_1$ factors $P$ with property (T) which satisfy $\text{Out}(P)=\{e\}$ and $\mathcal F(P)=\{1\}$. In a major breakthrough in \cite{Po01}, Popa discovered the first examples of  II$_1$ factors $P$ with $\mathcal F(P)=\{1\}$. The existence of II$_1$ factors $P$ with $\text{Out}(P)=\{e\}$ and $\mathcal F(P)=\{1\}$ was obtained in \cite{IPP05}. However, none of these II$_1$ factors have property (T). Only recently, property (T) II$_1$ factors $P$ with  $\mathcal F(P)=\{1\}$ were found in \cite{CDHK20}.
Most recently, the fact that $\bm{\mathcal{T}}$ is nonempty was proved in \cite{CIOS21}. To elaborate on the last result, we note that Popa's strengthening of Connes' rigidity conjecture (see \cite[Section 3]{Po06}) predicts that $\text{L}(G)\in \bm{\mathcal{T}}$, whenever $G$ is an icc property (T) group with $\text{Out}(G)=\{e\}$ and no characters. Confirming this conjecture for an uncountable class of groups, it was shown in \cite[Corollary 2.7]{CIOS21} that $\bm{\mathcal{T}}$ contains a continuum $\{\text{L}(G_i)\}_{i\in I}$ of nonisomorphic property (T) group II$_1$ factors. In particular, $\bm{\mathcal {T}}$ is uncountable.
Theorem \ref{A}(2) moreover shows that $\bm{\mathcal{T}}$ is embedding universal. Our next main result further strengthens this fact. Denote by $\text{\bf II}_1$ the family of all separable II$_1$ factors.

\begin{main}\label{C}
There is a map $\emph{\bf II}_1\times\mathbb R\ni (M,s)\mapsto P_{(M,s)}\in \bm{\mathcal{T}}$ such that
 $M$ embeds into $P_{(M,s)}$, and if $P_{(M,s)}$ is stably isomorphic to $P_{(N,t)}$ then $M\cong N$ and $s=t$, for every $(M,s),(N,t)\in\emph{\bf II}_1\times\mathbb R$.
\end{main}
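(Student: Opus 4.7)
The plan is to build each $P_{(M,s)}$ as the group von Neumann algebra of a wreath-like product group that rigidly encodes both the tracial von Neumann algebra $M$ and the real parameter $s$, and then to recover the pair $(M,s)$ from the stable isomorphism class of $P_{(M,s)}$ by combining the construction used in the proof of Theorem~\ref{A}(2) with the W$^*$-superrigidity of wreath-like product factors established in \cite{CIOS21}.

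Concretely, the plan splits into three steps. First, I would inspect the proof of Theorem~\ref{A}(2) to extract a construction that, given a separable tracial $(M,\tau)$ and an acylindrically hyperbolic property (T) group $H$, produces a wreath-like product group $\Gamma(M,H) = B_M \wr_\pi H$ such that $L(\Gamma(M,H)) \in \bm{\mathcal{T}}$ contains $M$, with $M$ pinned down inside $L(B_M)$ as the image under $L(\,\cdot\,)$ of a canonical, automorphism-invariant subgroup (or analogous group-theoretic datum) of $B_M$. Second, I would select from the family $\{L(G_i)\}_{i\in I} \subset \bm{\mathcal{T}}$ of \cite[Corollary~2.7]{CIOS21} a continuum $\{H_s\}_{s\in\R}$ of pairwise non-stably-isomorphic acylindrically hyperbolic property (T) groups, and set
$$P_{(M,s)} \;:=\; L(\Gamma(M,H_s)).$$
By construction $P_{(M,s)} \in \bm{\mathcal{T}}$ and $M$ embeds into $P_{(M,s)}$, so the assignment $(M,s)\mapsto P_{(M,s)}$ has the required embedding property.

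For the rigidity assertion, suppose $P_{(M,s)}$ is stably isomorphic to $P_{(N,t)}$. The W$^*$-superrigidity theorem for wreath-like product factors of \cite{CIOS21} then lifts this to a stable isomorphism of the underlying groups $\Gamma(M,H_s) \cong \Gamma(N,H_t)$ that preserves the wreath-like product decomposition. Matching the ``acting'' pieces yields a group isomorphism $H_s \cong H_t$, which forces $s=t$ by the rigidity of the chosen continuum; matching the ``base'' pieces yields a group isomorphism $B_M \cong B_N$, which induces a von Neumann algebra isomorphism $L(B_M) \cong L(B_N)$ sending the distinguished subalgebra encoding $M$ onto the one encoding $N$, and hence $M \cong N$.

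The main obstacle is the last implication: one must design the assignment $M \mapsto B_M$ so that the embedding $M \hookrightarrow L(B_M)$ is canonical with respect to the group structure of $B_M$, in a sense strong enough that an abstract group isomorphism $B_M \cong B_N$ forces a von Neumann algebra isomorphism $M \cong N$ between the pinned subalgebras. This refinement of the proof of Theorem~\ref{A}(2) is the most delicate technical point; once it is in place, the rest of the argument is a standard application of the rigidity machinery developed for wreath-like products in \cite{CIOS21}.
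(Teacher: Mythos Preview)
Your proposal rests on a misreading of the construction behind Theorem~\ref{A}(2). The factor $P_{(M,s)}$ produced there is \emph{not} a group von Neumann algebra $\text{L}(\Gamma)$ for some wreath-like product group $\Gamma$; it is a wreath-like product \emph{von Neumann algebra} $N=(M\overline\otimes R)^B\rtimes_{\beta,w}B\in\mathcal W\mathcal R(M\overline\otimes R,B)$, in which $M$ sits as a genuine subfactor $M\hookrightarrow M\overline\otimes R\cong (M\overline\otimes R)^e\subset N$. The group $G\in\mathcal W\mathcal R(\mathbb F_3,B)$ enters only through a \emph{representation} $\widetilde\pi:G\to\mathcal U(N)$ that generates $N$; this representation is far from the regular one, so $N\not\cong\text{L}(G)$ as soon as $M\overline\otimes R\not\cong\text{L}(\mathbb F_3)$. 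Consequently there is no group ``$B_M$'' whose group algebra canonically contains $M$, and the W$^*$-superrigidity theorem of \cite{CIOS21} for group factors $\text{L}(G)$ does not apply to $P_{(M,s)}$.

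The step you flag as ``the most delicate technical point''---encoding an arbitrary separable II$_1$ factor $M$ inside some $\text{L}(B_M)$ so that a group isomorphism $B_M\cong B_N$ forces $M\cong N$---is not a refinement one can simply carry out: whether every separable II$_1$ factor even embeds into a group II$_1$ factor is an open problem (see the remark after Corollary~\ref{B}). So this is a genuine gap, not a missing detail.

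The paper's route avoids all of this. One fixes a \emph{single} wreath-like product group $G\in\mathcal W\mathcal R(\mathbb F_3,B)$ with property (T), no characters, and $B$ icc hyperbolic with $\text{Out}(B)=\{e\}$. The datum $(M,s)$ is encoded entirely in the choice of a generating homomorphism $\pi_{(M,s)}:\mathbb F_3\to\mathcal U(M\overline\otimes R)$: two generators are sent to a fixed generating pair of $R$, and the third to $\sum_i w_i\otimes p_i$ where $(w_i)$ generates $M$ and $\tau(w_1)=s$. The rigidity input is Theorem~\ref{out}, a von Neumann algebraic analogue of the superrigidity in \cite{CIOS21} tailored to these wreath-like product factors: any stable isomorphism $P_{(M,s)}\cong P_{(N,t)}$ forces $p=1$ and conjugates $\widetilde\pi_{(M,s)}$ to $\widetilde\pi_{(N,t)}$ on all of $G$, hence intertwines $\pi_{(M,s)}$ and $\pi_{(N,t)}$ on $\mathbb F_3$. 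Matching the first two generators pins down the $R$-factor and forces the isomorphism to be of the form $\theta_1\otimes\text{Id}_R$; matching the third recovers $\theta_1(w_i^{(M,s)})=w_i^{(N,t)}$, whence $M\cong N$ and $s=\tau(w_1^{(M,s)})=\tau(w_1^{(N,t)})=t$. No continuum of acting groups is needed: both $M$ and $s$ are read off from the single representation.
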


Next, we state an analogue of  Theorem \ref{A}(2) for countable p.m.p. equivalence relations using Zimmer's notion of property (T) \cite{Zi81} in this context. 
While it remains open whether any countable p.m.p. equivalence relation $\mathcal R$ is contained into an ergodic countable p.m.p. equivalence relation with property (T), we prove this for the trivial extension of $\mathcal R$ over a non-atomic probability space:

\begin{main}\label{D}
Let $\mathcal R$ be a countable p.m.p. equivalence relation on a probability space $(X,\mu)$. Let $\Delta_Y=\{(y,y)\mid y\in Y\}$ be the trivial equivalence relation on a non-atomic probability space $(Y,\nu)$.
Then $\mathcal R\times\Delta_Y$ embeds into a countable ergodic p.m.p. equivalence relation $\mathcal S$ on $X\times Y$ such that $\emph{L}(\mathcal S)\in\bm{\mathcal{T}}$.
In particular, $\mathcal S$ has property (T), $\emph{Out}(\mathcal S)=\{e\}$ and $\mathcal F(\mathcal S)=\{1\}$.
\end{main}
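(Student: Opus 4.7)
The natural strategy is to derive Theorem \ref{D} from a Cartan-preserving refinement of Theorem \ref{A}(2). Set $M := \text{L}(\mathcal R \times \Delta_Y)$ with its Feldman--Moore Cartan subalgebra $A := L^\infty(X \times Y, \mu \times \nu)$. The key reduction I would carry out is the following: if $M$ embeds into some $P \in \bm{\mathcal T}$ in such a way that $A$ remains a Cartan subalgebra of $P$, then applying the Feldman--Moore construction to $A \subset P$ produces a countable p.m.p.\ equivalence relation $\mathcal S$ on $X \times Y$ with $\text{L}(\mathcal S) = P$. This $\mathcal S$ is automatically ergodic (because $P$ is a factor), it contains $\mathcal R \times \Delta_Y$ (because the normalizer of $A$ in $M$ sits inside the normalizer of $A$ in $P$), and the additional conclusions $\text{Out}(\mathcal S)=\{e\}$ and $\mathcal F(\mathcal S)=\{1\}$ are inherited from $P \in \bm{\mathcal T}$ via the fact that equivalence-relation automorphisms and amplifications lift to those of $\text{L}(\mathcal S)$.

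The Cartan-preserving embedding itself would be produced by adapting the proof of Theorem \ref{A}(2) so as to track the Cartan subalgebra. That proof is built on the wreath-like product groups $\Gamma = A_0^{(I)} \rtimes H$ introduced in \cite{CIOS21}, with $H$ a suitable hyperbolic property (T) group, $A_0$ a finite abelian group, and $H$ acting on $I$; for such $\Gamma$ one has $\text{L}(\Gamma) = L^\infty(\widehat{A_0}^{I}) \rtimes H$, and $L^\infty(\widehat{A_0}^{I})$ is a Cartan subalgebra arising from the generalized Bernoulli action. My plan is to replace this standard Bernoulli base with an essentially free p.m.p.\ action of $\Gamma$ on $(X \times Y, \mu \times \nu)$ itself, engineered so that (i) the $\Gamma$-orbit equivalence relation contains $\mathcal R \times \Delta_Y$, (ii) $A = L^\infty(X \times Y)$ is the full Cartan of $L^\infty(X \times Y) \rtimes \Gamma$, and (iii) this crossed product still lies in $\bm{\mathcal T}$. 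Step (i) can be arranged by coupling the $\Gamma$-action to a countable group generating $\mathcal R$ on $X$, while using the $Y$-direction to carry the wreath-like randomness required by the rigidity hypotheses from \cite{CIOS21}.

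The main obstacle is achieving (i)--(iii) simultaneously. The non-atomicity of $Y$ is essential for (ii): the measure-preserving isomorphism $(Y,\nu) \cong (Y \times \widehat{A_0}^{I}, \nu \times \mathrm{Haar})$ allows one to absorb the Bernoulli base into the $Y$-factor, so that $L^\infty(X \times Y)$ itself becomes the full Cartan rather than a strict subalgebra of one. The most delicate point is (iii): the theorems of \cite{CIOS21} responsible for placing $\text{L}(\Gamma) \in \bm{\mathcal T}$ require freeness, weak mixing, spectral gap, and Popa-type malleability properties of the underlying action of $\Gamma$ on its base, and one must verify all of these for the twisted action on $X \times Y$, no matter how pathological $\mathcal R$ is. This verification is the technical heart of the argument, and is where the methods used to prove Theorem \ref{A}(2) must be imported and adapted to the Cartan-preserving setting.
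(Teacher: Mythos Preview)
Your plan rests on a misreading of the construction. The wreath-like product groups used here and in \cite{CIOS21} for this purpose have base $A=\mathbb F_3$, not a finite abelian group; thus $\text{L}(G)$ for $G\in\mathcal W\mathcal R(\mathbb F_3,B)$ has no Cartan of the Pontryagin--Bernoulli type $L^\infty(\widehat{A_0}^I)$ that you describe, and there is nothing to ``absorb into the $Y$-factor''. The rigidity input (Theorem~\ref{out}) is likewise not a statement about crossed products $L^\infty\rtimes\Gamma$ with dynamical hypotheses; it concerns the wreath-like product von Neumann algebras $N_\pi=M^B\rtimes_{\beta^\pi,w^\pi}B$ attached to a generating homomorphism $\pi:\mathbb F_3\to\mathcal U(M)$, and its hypotheses are purely group-theoretic ($G$ has no characters, $B$ icc hyperbolic with $\text{Out}(B)=\{e\}$). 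Your step (i) is also unsupported: producing a free action of a fixed property~(T) group $\Gamma$ whose orbit relation contains an \emph{arbitrary} prescribed $\mathcal R$ is not something Feldman--Moore gives you.

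The paper takes a different route that never builds a $\Gamma$-action. The missing idea is Lemma~\ref{genR}: after replacing $\mathcal R$ by $\mathcal R\times\mathcal T$ with $\mathcal T$ ergodic hyperfinite on $Y$, one finds $\alpha,\beta,\gamma\in[\mathcal R]$ (elements of the full group, not just unitaries) with $\{u_\alpha,u_\beta,u_\gamma\}''=\text{L}(\mathcal R)$, i.e.\ a homomorphism $\pi:\mathbb F_3\to[\mathcal R]$. One then runs the wreath-like construction of Proposition~\ref{extend} \emph{inside the category of equivalence relations}: form $\mathcal R^{(B)}$ on $X^B$, push the cocycle action $(\alpha,v)$ of $B$ on $\mathbb F_3^{(B)}$ through $\pi^B$ to obtain a cocycle action $(\tau,\omega)$ of $B$ on $\mathcal R^{(B)}$ by elements of $\text{Aut}(\mathcal R^{(B)})$, and let $\mathcal S$ be the relation generated. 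Lemma~\ref{cocycleER} identifies $\text{L}(\mathcal S)$ with $\text{L}(\mathcal R)^B\rtimes_{\beta^{u\circ\pi},w^{u\circ\pi}}B$, which is exactly the $N_{u\circ\pi}$ to which Theorem~\ref{out} applies, yielding $\text{L}(\mathcal S)\in\bm{\mathcal T}$. The Cartan is preserved for free because every step lives at the level of relations; identifying $X^B\cong X\times X^{B\setminus\{e\}}$ gives the embedding $\mathcal R\times\Delta_Y\subset\mathcal R^{(B)}\subset\mathcal S$.
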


Finally, we mention two applications of Theorem \ref{A} to the structure of {\it infinitely generic} II$_1$ factors. First, extending the main result of \cite{Go20}, we prove that, given an infinitely generic II$_1$ factor $Q$, any separable full II$_1$ factor $P$ admits an embedding $P\hookrightarrow Q^\omega$ with factorial relative commutant (see Theorem \ref{Go20}).
Secondly, we prove that any infinitely generic II$_1$ factor $Q$ is super McDuff, i.e., its central sequence algebra $Q'\cap Q^\omega$ is a II$_1$ factor (see Theorem \ref{igsMc}).

\subsection*{Comments on the proof of Theorem \ref{A}} 
The proof of Theorem \ref{A} relies heavily on the class of wreath-like product groups introduced recently by Osin, Sun and two of the authors in \cite{CIOS21}. Specifically, to prove the main assertion of Theorem \ref{A}, we combine the following two facts:
\begin{enumerate}[label=(\alph*)]
\item Let $G\in\mathcal W\mathcal R(A,B)$ be a wreath-like product of two infinite groups $A$ and $B$. Then any homomorphism $\pi:A\rightarrow\mathcal U(M)$  generating a II$_1$ factor $M$ extends to a homomorphism $\widetilde\pi:G\rightarrow\mathcal U(N)$ which generates a  II$_1$ factor $N$ containing $M$ (see Proposition \ref{extend}). This new II$_1$ factor $N$ is a wreath-like product of $M$ and $B$, in a sense defined in Section \ref{wrelike}.

 \item There exists a property (T) group $G\in\mathcal W\mathcal R(\mathbb F_3,B)$, for some group $B$.

\end{enumerate}
Fact (a) is a consequence of the existence of a  special cocycle semidirect product decomposition of $G$ observed in a preliminary version of \cite{CIOS21} (see Lemma \ref{special}). Fact (b) was proved in \cite{CIOS21} (see Theorem \ref{wreath}).

If $M$ is a II$_1$ factor generated by $3$ unitaries, then there is a homomorphism $\pi:\mathbb F_3\rightarrow\mathcal U(M)$ generating $M$. 
Combining (a) and (b) implies the existence of a II$_1$ factor $N$ which contains $M$ and is generated by a representation of a property (T) group  and thus has property (T). Since every separable tracial von Neumann algebra is contained in a II$_1$ factor generated by $3$ unitaries (in fact, in a II$_1$ factor which is single generated and thus generated by $2$ unitaries \cite{GP98}), we conclude that every separable tracial von Neumann algebra is contained in a II$_1$ factor with property (T). This implies the main assertion of Theorem \ref{A}.
Part (1) of Theorem \ref{A} also follows since, as shown in \cite{CIOS21} (see Theorem \ref{wreath}),  one can take $G$ in (b) to be a quotient of any  acylindrically hyperbolic group.
Finally, to prove part (2) of Theorem \ref{A} we use that in (b) one can take $B$ to be an icc hyperbolic group with property (T) and trivial outer automorphism group.
 Additionally, we adapt techniques from Popa's deformation/rigidity theory used in \cite{CIOS21} to show that if $G\in\mathcal W\mathcal R(A,B)$ has property (T) and no characters, then any II$_1$ factor $N$ constructed as in (a) has trivial  outer automorphism and fundamental groups (see Theorem \ref{out}). 
\section{Preliminaries}

\subsection{Wreath-like product groups}\label{wrlike}
The proofs of our main results rely on the class of wreath-like product groups introduced in \cite{CIOS21}:

\begin{definition}\cite{CIOS21}
A group $G$ is called a {\it wreath-like product} of two groups $A$ and $B$ if it is an extension of the form $\{e\}\rightarrow\oplus_{b\in B}A_b\rightarrow G\xrightarrow{\varepsilon}B\rightarrow\{e\}$ such that $A_b\cong A$ and $gA_bg^{-1}=A_{\varepsilon(g)b}$, for every $b\in B$ and $g\in G$. The set of all wreath-like products of $A$ and $B$ is denoted by $\mathcal W\mathcal R(A,B)$.
\end{definition}

It is clear that the usual wreath product group $A\wr B=A^{(B)}\rtimes B$ belongs to $\mathcal W\mathcal R(A,B)$. A remarkable fact established in \cite{CIOS21} is that although wreath products never have property (T), there is an abundance of wreath-like products with property (T). This is illustrated by the following result.

\begin{theorem}\emph{\cite{CIOS21}}\label{wreath}
Let $A$ be any finitely generated group.
\begin{enumerate}
\item There exists  a property (T) group $G$ with no nontrivial characters which belongs to $\mathcal W\mathcal R(A,B)$, for some icc property (T) hyperbolic group $B$ with $\emph{Out}(B)=\{e\}$.
\item Every acylindrically hyperbolic group $H$ admits a quotient which belongs to $\mathcal W\mathcal R(A,C)$, for some group $C$.
\end{enumerate}
\end{theorem}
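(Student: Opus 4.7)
The strategy is to build $G$ as a Dehn-filling-type quotient of an acylindrically hyperbolic group containing $A$, using the small-cancellation theory over hyperbolically embedded subgroups developed by Olshanskii, Dahmani--Guirardel--Osin, Hull, Osin and Sun. The crucial ingredient is a Cohen--Lyndon-type theorem for hyperbolically embedded subgroups (due to Sun): if $K$ is hyperbolically embedded in an ambient group $\Gamma$ and $R \lhd K$ is chosen sufficiently deep, then the normal closure $\langle\langle R\rangle\rangle^\Gamma$ is canonically a free product $\ast_{t\in T}\, tRt^{-1}$ indexed by a transversal $T$ of $K$ in $\Gamma$, and the natural map $K/R \hookrightarrow \Gamma/\langle\langle R\rangle\rangle^\Gamma$ is injective. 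This will let us control the subgroup structure of a suitable quotient.

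For part (2), start with the free product $\Gamma_0 = H \ast A$, which is acylindrically hyperbolic with both $H$ and $A$ hyperbolically embedded (via the action on the Bass--Serre tree). I would perform a single coordinated Dehn filling that simultaneously (i) kills a deep normal subgroup $N \lhd H$, producing a base $C := H/N$, and (ii) imposes the commutation relations $[gag^{-1},\, hbh^{-1}] = e$ for all $a,b \in A$ whenever $g,h \in \Gamma_0$ project to distinct elements of $C$. Applying Sun's theorem to $A \hookrightarrow \Gamma_0$ then ensures that in the quotient $G$ the normal closure of $A$ is precisely $\bigoplus_{b \in C} A_b$ with $A_b \cong A$, and conjugation by $G$ permutes the summands via the shift action of $C$, so $G \in \mathcal W\mathcal R(A, C)$. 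Since this $G$ is a priori a quotient of $\Gamma_0$ rather than of $H$, one first applies the Olshanskii--Delzant SQ-universality of acylindrically hyperbolic groups to embed $A$ into a suitable (still acylindrically hyperbolic) quotient $\bar H$ of $H$, and carries out the construction over $\bar H$ instead; the resulting $G$ is then genuinely a quotient of $H$.

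For part (1), take $H$ to be a cocompact lattice in $\Sp(n,1)$, $n \geq 2$, which is hyperbolic with property (T); property (T) descends to every quotient, so the constructed $G$ inherits it automatically. To ensure in addition that $B = C$ is icc, hyperbolic with property (T), and satisfies $\mathrm{Out}(B)=\{e\}$, I would select $N \lhd H$ via the group-theoretic Dehn filling of Dahmani--Guirardel--Osin in tandem with Belegradek--Osin's construction of hyperbolic property (T) groups with no outer automorphisms. Absence of nontrivial characters of $G$ then follows from the wreath-like structure: any character is forced to vanish on $\bigoplus_{b \in B} A_b$ (the shift action of $B$ on the abelianization of this kernel collapses its image in any abelian quotient), hence factors through $B$, where it is trivial by property (T) together with the perfectness built into the choice of $B$. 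The technical heart and main obstacle is verifying that the combined relation set---the deep $N \lhd H$ together with all the commutation relations---satisfies the small-cancellation / rotating family hypothesis over $\Gamma_0$, so that Sun's theorem actually applies and delivers the claimed wreath-like structure without collapsing anything else; this coordination is delicate because property (T) leaves very little slack for adding relations simultaneously.
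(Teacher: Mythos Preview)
The paper does not itself prove this result; it cites \cite[Theorem 6.9]{CIOS21} for part (1) and \cite[Theorem 4.20]{CIOS21} for part (2). Your sketch has the right flavor---\cite{CIOS21} does proceed via Dehn filling over acylindrically hyperbolic groups and relies on the Cohen--Lyndon property for hyperbolically embedded subgroups---but there is a genuine gap in your treatment of part (1).

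Your argument that $G$ has no nontrivial characters treats ``character'' as a homomorphism to an abelian group: you invoke abelianizations, perfectness, and property~(T). In this paper and in \cite{CIOS21}, a character means a normalized, conjugation-invariant, positive-definite function $G\to\C$ (equivalently, a tracial state on $C^*(G)$). Such a function need not vanish on any normal subgroup and need not factor through any quotient; perfectness and property~(T) by themselves say nothing about nontrivial Thoma characters. In \cite{CIOS21} the absence of characters is a separate and substantial argument combining the wreath-like structure with character-rigidity input for hyperbolic groups, and your sketch does not address this. Secondarily, while you correctly flag the passage from the free-product kernel delivered by Cohen--Lyndon to the required direct sum $\oplus_{b}A_b$ as the technical crux, your proposed mechanism---adjoining the infinite family of commutators $[gag^{-1},hbh^{-1}]$ as further small-cancellation relators---is not how \cite{CIOS21} proceeds and is not obviously workable, since these relators do not lie in a deep normal subgroup of a single hyperbolically embedded subgroup as the rotating-family machinery requires.
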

For parts (1) and (2), see \cite[Theorem 6.9]{CIOS21} and \cite[Theorem 4.20]{CIOS21}, respectively.

\subsection{Tracial von Neumann algebras} A {\it tracial von Neumann algebra} is a pair $(M,\tau)$ consisting of a von Neumann algebra $M$ and a faithful normal tracial state $\tau:M\rightarrow\mathbb C$. 
We denote by $\mathcal U(M)$ the group of unitary elements of $M$ and by $\text{Aut}(M)$ the group of $\tau$-preserving automorphisms of $M$. 
By von Neumann's bicommutant theorem, if $S\subset \mathcal U(M)$, then $S''\subset M$ is equal to the von Neumann algebra generated by $S$. 
For $u\in\mathcal U(M)$, we denote by $\text{Ad}(u)\in\text{Aut}(M)$ the inner automorphism of $M$ given by $\text{Ad}(u)(x)=uxu^*$. The group of all inner automorphisms of $M$ is denoted by $\text{Inn}(M)$. For  a set $I$, we denote by $(M^I,\tau)$ the tensor product tracial von Neumann algebra $\overline{\otimes}_{i\in I}(M, \tau)$. For $J\subset I$, we view $M^J\subset M^I$, in the natural way. For $i\in I$, we write $M^i$ instead of $M^{\{i\}}$. 

A tracial von Neumann algebra $(M,\tau)$ is a {\it II$_1$ factor} if it is infinite dimensional and has trivial center. A II$_1$ factor $M$ has {\it property (T)}, in the sense of Connes and Jones \cite{CJ85}, if there exist $F\subset M$ finite and $\delta>0$ such that if $\mathcal H$ is a Hilbert $M$-bimodule and $\xi\in\mathcal H$ is a unit vector with $\max_{x\in F}\|x\xi-\xi x\|<\delta$, then there exists $\eta\in\mathcal H$, $\eta\not=0$, such that $y\eta=\eta y$, for every $y\in M$.
If $M$ is a II$_1$ factor such that $M=\pi(G)''$, for a property (T) group $G$ and a homomorphism $\pi:G\rightarrow\mathcal U(M)$, then $M$ has property (T) (see the proof of \cite[Theorem 2]{CJ85} or \cite[Theorem 4.1.7]{Po86}).



\subsection{Equivalence relations}\label{ER} Let $(X,\mu)$ be a probability space, which we will always assume to be standard. Let $\mathcal R$ be a countable p.m.p. equivalence relation on $(X,\mu)$ \cite{FM77}. We endow $\mathcal R$ with the $\sigma$-finite Borel measure $\nu_{\mathcal R}$ defined by $\nu_{\mathcal R}(A)=\int_{X} |A^x|\;\text{d}\mu(x)$, where $A^x=\{y\in X|(x,y)\in A \}$.

For $x\in X$, we denote by $[x]_{\mathcal R}$ its $\mathcal R$-equivalence class. The {\it automorphism group} of $\mathcal R$, denoted $\text{Aut}(\mathcal R)$, is the group of  all $\theta\in\text{Aut}(X,\mu)$ such that $\theta([x]_{\mathcal R})=[\theta(x)]_{\mathcal R}$, for $\mu$-almost every $x\in X$. The {\it full group} of $\mathcal R$, denoted $[\mathcal R]$, consists of all $\theta\in\text{Aut}(\mathcal R)$ such that $\theta(x)\in [x]_{\mathcal R}$, for $\mu$-almost every $x\in X$. Note that $[\mathcal R]$ is a normal subgroup of $\text{Aut}(\mathcal R)$.
The {\it outer automorphism} of $\mathcal R$ is  defined as $\text{Out}(\mathcal R)=\text{Aut}(\mathcal R)/[\mathcal R]$.
The {\it fundamental group} of $\mathcal R$ is the set of all quotients $\mu(A)/\mu(B)\in (0,+\infty)$, where $A,B\subset X$ are non-null measurable sets such that $\mathcal R\cap (A\times A)\cong\mathcal R\cap (B\times B)$.

Associated to $\mathcal R$ is a tracial von Neumann algebra $\text{L}(\mathcal R)$ \cite{FM77}. This  is generated by a copy of $\text{L}^\infty(X)$ and the image of a homomorphism $u:[\mathcal R]\rightarrow\mathcal U(\text{L}(\mathcal R))$ such that $u_\theta fu_\theta^*=f\circ\theta^{-1}$, for every $f\in\text{L}^\infty(X)$ and $\theta\in[\mathcal R]$. We have a homomorphism $$i:\text{Aut}(\mathcal R)\rightarrow\text{Aut}(\text{L}(\mathcal R))$$ given by $i(\varphi)(u_\theta)=u_{\varphi\theta\varphi^{-1}}$ and $i(\varphi)(f)=f\circ\varphi^{-1}$, for every $\varphi\in\text{Aut}(\mathcal R)$, $\theta\in [\mathcal R]$ and $f\in \text{L}^\infty(X)$.
Then $i([\mathcal R])\subset\text{Inn}(\text{L}(\mathcal R))$ and $i$ descends to an injective homomorphism $\text{Out}(\mathcal R)\hookrightarrow\text{Out}(\text{L}(\mathcal R))$. Moreover, we have that $\mathcal F(\mathcal R)\subset\mathcal F(\text{L}(\mathcal R))$.

In \cite{Zi81}, Zimmer introduced a notion of property (T) for countable p.m.p. equivalence relations. 
Let $\mathcal R$ be a countable ergodic p.m.p. equivalence relation on a probability space $(X,\mu)$. If $\text{L}(\mathcal R)$ has property (T), then $\mathcal R$ has property (T).
This is because property (T) for $\mathcal R$ is equivalent to $\text{L}(\mathcal R)$ having property (T) relative to $\text{L}^\infty(X)$ in the sense of \cite[Definition 4.1.3]{Po86}, and latter condition holds if  $\text{L}(\mathcal R)$ has property (T).

\subsection{Cocycle actions} In this subsection, we discuss several notions of cocycle actions on von Neumann algebras, groups and equivalence relations.

\begin{definition}
A {\it cocycle action} of a group $B$ on a tracial von Neumann $(M,\tau)$ is a pair $(\beta, w)$ consisting of two maps $\beta:B\rightarrow\text{Aut}(M)$ and $w:B\times B\rightarrow\mathcal U(M)$ which satisfy the following 
\begin{enumerate}
\item $\beta_b\beta_c=\text{Ad}(w_{b,c})\beta_{bc}$, for every $b,c\in B$, 
\item $w_{b,c}w_{bc,d}=\beta_b(w_{c,d})w_{b,cd}$, for every $b,c,d\in B$, and
\item $w_{b,e}=w_{e,b}=1$, for every $b\in B$.
\end{enumerate}
\end{definition}

\begin{definition}
Let $(\beta,w)$ be a cocycle action of a group $B$ on a tracial von Neumann algebra $(M,\tau)$.
The {\it cocycle crossed product} von Neumann algebra $M\rtimes_{\beta,w}B$ is a tracial von Neumann algebra which is  generated by a copy of $M$ and unitary elements $\{u_b\}_{b\in B}$ such that
$u_bxu_b^*=\beta_b(x)$, $u_bu_c=w_{b,c}u_{bc}$ and $\tau(xu_b)=\tau(x)\delta_{b,e}$, for every $b,c\in B$ and $x\in M$.
\end{definition}

The above notions have well-known analogues for groups (see, e.g., \cite[pages 104-105]{Br82}):
\begin{definition}
A {\it cocycle action} of a group $B$ on a group $A$ is a pair $(\alpha, v)$ consisting of two maps $\alpha:B\rightarrow\text{Aut}(A)$ and $v:B\times B\rightarrow A$ which satisfy the following 
\begin{enumerate}
\item $\alpha_b\alpha_c=\text{Ad}(v_{b,c})\alpha_{bc}$, for every $b,c\in B$, 
\item $v_{b,c}v_{bc,d}=\alpha_b(v_{c,d})v_{b,cd}$, for every $b,c,d\in B$, and
\item $v_{b,e}=v_{e,b}=e$, for every $b\in B$.
\end{enumerate}
\end{definition}

\begin{definition}\label{csproduct}
 Let $(\alpha, v)$ be a cocycle action of a group $B$ on group $A$.   Then the set $A\times B$ endowed with the unit $e=(e,e)$ and the multiplication operation $(x,b)\cdot (y,c)=(x\alpha_b(y)v_{b,c},bc)$ is a group, denoted $A\rtimes_{\alpha,v}B$, and called the {\it cocycle semidirect product} group. 
Moreover, we have a short exact sequence $\{e\}\rightarrow A\xrightarrow{i} A\rtimes_{\alpha,v}B\xrightarrow{\gamma} B\rightarrow\{e\}$, where $i(a)=(a,e)$ and $\gamma(a,b)=b$.\end{definition}

While the above terminology (cocycle actions and cocycle semidirect products) is not standard in group theory, we adopt it in this paper by analogy with the von Neumann algebra case.

Conversely, as is well-known and easy to see, any extension arises as a cocycle semidirect product:

\begin{lemma}\label{semi}
Let $A,B$ be groups and consider a short exact sequence $\{e\}\rightarrow A\xrightarrow{i} G\xrightarrow{\gamma} B\rightarrow \{e\}$. 
Let $k:B\rightarrow G$ be a map such that $\gamma(k_b)=b$, for every $b\in B$. Define $\alpha:B\rightarrow\emph{Aut}(A)$ and $v:B\times B\rightarrow A$ by letting $\alpha_b=\emph{Ad}(k_b)$ and $v_{b,c}=k_bk_ck_{bc}^{-1}$, for every $b,c\in B$.
Then $(\alpha,v)$ is a cocycle action of $B$ on $A$ and $\delta:A\rtimes_{\alpha,v}B\rightarrow G$ given by $\delta(a,b)=i(a)k_b$ is an isomorphism.
\end{lemma}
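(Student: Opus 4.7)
The plan is to verify the three assertions directly from the definitions, exploiting the fact that $i(A)$ is normal in $G$ (so that conjugation by $k_b$ preserves $i(A)$) and that the associativity of multiplication in $G$ is precisely the source of the $2$-cocycle identity. First I would note that, without loss of generality, we may assume $k_e=e$ (replace $k_e$ by $e$, which is allowed since $\gamma(e)=e$); this is needed so that the normalization condition $v_{b,e}=v_{e,b}=e$ holds, since $v_{e,b}=k_ek_bk_b^{-1}=k_e$ and similarly $v_{b,e}=k_e$. Once this normalization is in place, the fact that $v_{b,c}\in i(A)$ follows from $\gamma(k_bk_ck_{bc}^{-1})=bc\cdot(bc)^{-1}=e$ combined with exactness, and so we may (and do) identify $v_{b,c}$ with the corresponding element of $A$. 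Similarly, normality of $i(A)$ in $G$ ensures that $a\mapsto k_b\,i(a)\,k_b^{-1}$ defines an automorphism of $i(A)$, hence of $A$, which is $\alpha_b$.

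Next I would check the two cocycle action axioms. For the first, a direct computation gives
\[
\alpha_b\alpha_c(a)=k_bk_c\,i(a)\,k_c^{-1}k_b^{-1}=(k_bk_ck_{bc}^{-1})k_{bc}\,i(a)\,k_{bc}^{-1}(k_bk_ck_{bc}^{-1})^{-1}=\mathrm{Ad}(v_{b,c})\alpha_{bc}(a),
\]
where all equalities take place in $G$ and then we transport back to $A$ via $i$. For the $2$-cocycle identity, I would expand $k_bk_ck_d$ in two different ways using associativity:
\[
(k_bk_c)k_d=v_{b,c}\,k_{bc}k_d=v_{b,c}v_{bc,d}\,k_{bcd},\qquad k_b(k_ck_d)=k_b\,v_{c,d}\,k_{cd}=\alpha_b(v_{c,d})v_{b,cd}\,k_{bcd},
\]
and the equality of the two right-hand sides, after canceling $k_{bcd}$, yields $v_{b,c}v_{bc,d}=\alpha_b(v_{c,d})v_{b,cd}$.

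I would then turn to the map $\delta(a,b)=i(a)k_b$. That it is a homomorphism is a short computation: by the definition of the product in $A\rtimes_{\alpha,v}B$,
\[
\delta\bigl((a,b)(a',b')\bigr)=i(a)\,i(\alpha_b(a'))\,i(v_{b,b'})\,k_{bb'}=i(a)\,k_b\,i(a')\,k_b^{-1}\,(k_bk_{b'}k_{bb'}^{-1})\,k_{bb'}=i(a)k_b\cdot i(a')k_{b'},
\]
which equals $\delta(a,b)\delta(a',b')$. Surjectivity of $\delta$ follows because for any $g\in G$ we have $\gamma(gk_{\gamma(g)}^{-1})=e$, hence $gk_{\gamma(g)}^{-1}=i(a)$ for a unique $a\in A$, giving $g=\delta(a,\gamma(g))$; injectivity follows because $i(a)k_b=e$ forces $b=\gamma(i(a)k_b)=e$ and then $i(a)=k_e^{-1}=e$, so $a=e$ by injectivity of $i$.

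I do not expect any serious obstacle: the argument is essentially bookkeeping. The only subtle point is the normalization $k_e=e$, which must be imposed (or arranged) before one can assert that $(\alpha,v)$ satisfies axiom (3) of a cocycle action; once that is settled, every remaining identity reduces to associativity of multiplication in $G$ or to the exactness of the given sequence.
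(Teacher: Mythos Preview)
Your proof is correct and is precisely the standard verification; the paper itself gives no proof of this lemma, stating only that it is ``well-known and easy to see.'' One harmless slip: you write $v_{b,e}=k_e$, but actually $v_{b,e}=k_bk_ek_b^{-1}$; this does not affect the argument, since once $k_e=e$ is imposed both $v_{e,b}$ and $v_{b,e}$ vanish as needed.
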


The next lemma, whose proof is straightforward, relates the above two notions of cocycle actions.

\begin{lemma}\label{extension}
Let $(\alpha, v)$ be a cocycle action of a group $B$ on a group $A$. Let  $(\beta,w)$ be a cocycle action of $B$ on a tracial von Neumann algebra $(M,\tau)$ and $\pi:A\rightarrow\mathcal U(M)$ be a homomorphism. Assume that $\pi\circ\alpha_b=\beta_b\circ\pi$ and $w_{b,c}=\pi(v_{b,c})$, for every $b,c\in B$.
Then $\pi$ extends to a homomorphism $\widetilde\pi:A\rtimes_{\alpha,v}B\rightarrow \mathcal U(M\rtimes_{\beta,w}B)$ given by $\widetilde\pi(a,b)=\pi(a)u_b$, for every $a\in A,b\in B$.
\end{lemma}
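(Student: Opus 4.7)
The proof is a direct verification using the defining relations of the two cocycle crossed/semidirect products, so my plan is to carry out the computation rather than introduce any new machinery.

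First I would observe that $\widetilde\pi$ indeed takes values in $\mathcal U(M\rtimes_{\beta,w}B)$, since $\pi(a)$ is unitary in $M$ and $u_b$ is unitary in $M\rtimes_{\beta,w}B$, hence so is the product $\pi(a)u_b$. I would also record right away that $\widetilde\pi$ extends $\pi$ in the sense that $\widetilde\pi(i(a))=\widetilde\pi(a,e)=\pi(a)u_e=\pi(a)$, since $w_{e,e}=1$ forces $u_e=1$ in $M\rtimes_{\beta,w}B$.

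The main (and essentially only) step is to check that $\widetilde\pi$ is multiplicative. Using the group law $(a,b)\cdot(a',b')=(a\,\alpha_b(a')\,v_{b,b'},\,bb')$ on $A\rtimes_{\alpha,v}B$ and the fact that $\pi$ is a homomorphism, I would compute
\begin{align*}
\widetilde\pi\bigl((a,b)\cdot(a',b')\bigr)
&=\pi\bigl(a\,\alpha_b(a')\,v_{b,b'}\bigr)\,u_{bb'}\\
&=\pi(a)\,\pi(\alpha_b(a'))\,\pi(v_{b,b'})\,u_{bb'}.
\end{align*}
On the other hand, using the relations $u_b x u_b^{*}=\beta_b(x)$ and $u_bu_{b'}=w_{b,b'}u_{bb'}$ in $M\rtimes_{\beta,w}B$,
\begin{align*}
\widetilde\pi(a,b)\,\widetilde\pi(a',b')
&=\pi(a)\,u_b\,\pi(a')\,u_{b'}\\
&=\pi(a)\,\beta_b(\pi(a'))\,u_b u_{b'}\\
&=\pi(a)\,\beta_b(\pi(a'))\,w_{b,b'}\,u_{bb'}.
\end{align*}
The two expressions agree by the compatibility assumptions $\beta_b\circ\pi=\pi\circ\alpha_b$ and $w_{b,b'}=\pi(v_{b,b'})$. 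This establishes that $\widetilde\pi$ is a group homomorphism, completing the proof.

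There is no real obstacle here: the lemma is a bookkeeping statement whose content is precisely that the two compatibility hypotheses are exactly what is needed to match the multiplicative structure of the semidirect product with that of the crossed product. The only thing to be careful about is keeping track of where the cocycles $v_{b,b'}$ and $w_{b,b'}$ appear, and using $\pi$ being a homomorphism to split $\pi(a\,\alpha_b(a')\,v_{b,b'})$ into three factors before invoking the two compatibility identities.
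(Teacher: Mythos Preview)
Your proof is correct and is exactly the straightforward verification the paper has in mind; the paper does not actually write out a proof, simply noting that the lemma's proof ``is straightforward,'' and your computation is the natural one.
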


\begin{definition} Let $\mathcal R$ be a countable p.m.p. equivalence relation on a probability space $(X,\mu)$.
A {\it cocycle action} of a countable group $B$ on $\mathcal R$ is a pair $(\gamma, \omega)$ consisting of two maps $\gamma:B\rightarrow\text{Aut}(\mathcal R)$ and $\omega:B\times B\rightarrow [\mathcal R]$ which satisfy the following 
\begin{enumerate}
\item $\gamma_b\gamma_c=\omega_{b,c}\gamma_{bc}$, for every $b,c\in B$, 
\item $\omega_{b,c}\omega_{bc,d}=\text{Ad}(\gamma_b)(\omega_{c,d})\omega_{b,cd}$, for every $b,c,d\in B$, and
\item $\omega_{b,e}=\omega_{e,b}=\text{Id}_X$, for every $b\in B$.
\end{enumerate}
\end{definition}
The following lemma, which uses the notation from Subsection \ref{ER}, is immediate:
\begin{lemma}\label{cocycleER}  Let $\mathcal R$ be a countable p.m.p. equivalence relation on a probability space $(X,\mu)$ and $(\gamma,\omega)$ be a cocycle action  of a countable group $B$ on $\mathcal R$ such that $\mu(\{x\in X\mid \gamma_b(x)\in [x]_{\mathcal R}\})=0$, for every $b\in B\setminus\{e\}$.
 Let $\mathcal S$ be the smallest equivalence relation on $X$ which contains $\mathcal R$ and the graph of $\gamma_b$, for every $b\in B$. Then $(i\circ\gamma,u\circ\omega)$ is a cocycle action of $B$ on $\emph{L}(\mathcal R)$ and $\emph{L}(\mathcal S)\cong\emph{L}(\mathcal R)\rtimes_{i\circ\gamma,u\circ\omega}B$.
\end{lemma}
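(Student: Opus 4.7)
To prove the two claims, I would first verify the cocycle action axioms and then construct an explicit isomorphism via the universal property of the cocycle crossed product. For the axioms, the key observation is that $i|_{[\cR]} = \Ad\circ u$ (both $i(\theta)$ and $\Ad(u_\theta)$ act as $f\mapsto f\circ\theta^{-1}$ on $L^\infty(X)$ and as $u_\sigma\mapsto u_{\theta\sigma\theta^{-1}}$). Axiom (1) for $(i\circ\gamma, u\circ\omega)$ then follows by applying $i$ to axiom (1) for $(\gamma,\omega)$; axiom (2) follows by applying $u$ to the corresponding identity in $[\cR]$ together with $u_{\gamma_b\sigma\gamma_b^{-1}}=i(\gamma_b)(u_\sigma)$; axiom (3) is immediate.

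For the isomorphism, since $\cR\subset\cS$ gives $\text{L}(\cR)\subset\text{L}(\cS)$ and each $\gamma_b\in[\cS]$, I set $U_b:=u_{\gamma_b}\in\cU(\text{L}(\cS))$. A direct computation on the generators $L^\infty(X)$ and $\{u_\theta\mid\theta\in[\cR]\}$ yields $U_bxU_b^*=(i\circ\gamma)_b(x)$ for $x\in\text{L}(\cR)$, while the homomorphism property of $u$ combined with axiom (1) for $(\gamma,\omega)$ gives $U_bU_c=u_{\gamma_b\gamma_c}=u_{\omega_{b,c}}u_{\gamma_{bc}}=(u\circ\omega)_{b,c}U_{bc}$. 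The crucial step, and the only place where the freeness hypothesis is used, is the trace orthogonality $\tau(xU_b)=\tau(x)\delta_{b,e}$ for $x\in\text{L}(\cR)$: by linearity and weak-$*$ density it suffices to take $x=fu_\theta$ with $f\in L^\infty(X)$, $\theta\in[\cR]$, and $b\neq e$, in which case
$$\tau(fu_\theta u_{\gamma_b})=\tau(fu_{\theta\gamma_b})=\int_{\{x\,:\,\theta(\gamma_b(x))=x\}}f\,d\mu=0,$$
since the integration domain is contained in $\{x:\gamma_b(x)\in[x]_{\cR}\}$, which has measure zero by hypothesis.

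By the universal property of the cocycle crossed product, these relations produce a trace-preserving (hence injective) $*$-homomorphism $\Phi:\text{L}(\cR)\rtimes_{i\circ\gamma,u\circ\omega}B\to\text{L}(\cS)$ extending the identity on $\text{L}(\cR)$ and sending the canonical unitaries to the $U_b$. The remaining and main obstacle is surjectivity, for which one needs $u_\sigma$ in the range of $\Phi$ for every $\sigma\in[\cS]$. Since $\cS$ is generated as an equivalence relation by $\cR$ and the graphs of the $\gamma_b$, a standard measurable selection argument produces a countable measurable partition $X=\bigsqcup_n X_n$ together with elements $\tau_n$ in the subgroup of $\Aut(X,\mu)$ generated by $[\cR]\cup\{\gamma_b\}_{b\in B}$ such that $\sigma|_{X_n}=\tau_n|_{X_n}$; then $u_\sigma=\sum_n\chi_{X_n}u_{\tau_n}$ lies in the strong-operator closure of the range of $\Phi$, completing the proof.
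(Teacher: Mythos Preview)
Your proof is correct and takes a genuinely different route from the paper's. The paper proceeds spatially: it first shows that the freeness hypothesis yields the orbit decomposition $[x]_{\cS}=\bigsqcup_{g\in B}[\gamma_g(x)]_{\cR}$, then builds an explicit measure-space isomorphism $\phi:(\cR\times B,\nu_{\cR}\times c)\to(\cS,\nu_{\cS})$, $\phi(x,y,g)=(x,\gamma_{g^{-1}}(y))$, and uses the induced unitary $V$ on $L^2$ to conjugate the cocycle crossed product onto $\text{L}(\cS)$. Your approach is algebraic: you bypass the measure-theoretic computation entirely by verifying the crossed-product relations for the unitaries $U_b=u_{\gamma_b}$ inside $\text{L}(\cS)$ and invoking the universal property (equivalently, comparing GNS constructions via the trace). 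The freeness hypothesis enters at the same conceptual point in both arguments---in the paper it is what makes the orbit decomposition disjoint (hence $\phi$ bijective), while for you it is exactly what gives the trace orthogonality $\tau(xU_b)=0$ for $b\neq e$. Both proofs close with the same observation: $\text{L}(\cS)$ is generated by $\text{L}^\infty(X)$ together with $\{u_\theta:\theta\in[\cR]\}\cup\{u_{\gamma_b}:b\in B\}$, which the paper states directly and you obtain via the measurable selection/decomposition argument. Your route is shorter and more conceptual; the paper's is more concrete and gives an explicit $L^2$-picture of the isomorphism. One small cosmetic point: with the usual conventions the decomposition in your last line should read $u_\sigma=\sum_n u_{\tau_n}\chi_{X_n}$ (or $\sum_n \chi_{\tau_n(X_n)}u_{\tau_n}$), but this does not affect the argument.
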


{\it Proof.} The fact that $(i\circ\gamma,u\circ\omega)$ is a cocycle action of $B$ on $\emph{L}(\mathcal R)$ follows trivially. To prove the second assertion, we notice that the hypothesis implies that $[x]_{\mathcal S}=\sqcup_{g\in B} [\gamma_{g}(x)]_{\mathcal R}$, for almost every $x\in X$.
We claim that the map $\phi: (\mathcal R\times B, \nu_{\mathcal R}\times c )\to \mathcal (S,\nu_{\mathcal S})$ given by $\phi(x,y,g)=(x,\gamma_{g^{-1}}(y))$, for all $(x,y)\in \mathcal R$ and $g\in B$,
is a measure space isomorphism, where  $c$ is the counting measure on $B$. 

To justify this claim, it suffices to show that $ (\nu_{\mathcal R}\times c)(\phi^{-1}(A))=\nu_{\mathcal S}(A)$, for any Borel set $A\subset\mathcal S$. 
To this end, let $C_g=\{(x,y)\in\mathcal R| (x,\gamma_{g^{-1}}(y))\in A \}$, for $g\in B$. Then we have that
\begin{equation}\label{o1}
\begin{array}{rcl}
    (\nu_{\mathcal R}\times c)(\phi^{-1}(A))  &=&   \sum_{g\in B} \int_{\mathcal R} 1_A(x,\gamma_{g^{-1}}(y))\;\text{d}\nu_{\mathcal R}(x,y)=\sum_{g\in B} \nu_{\mathcal R}(C_g) \\
    &=& \sum_{g\in B} \int_{X} |C_g\cap [x]_{\mathcal R}|\;\text{d}\mu(x).
\end{array}
\end{equation}
Note that for almost every $x\in X$ and $y\in A\cap [x]_{\mathcal S}$, there is a unique $g\in B$ such that $(x,y)\in A$ and $(x,\gamma_{g}(y))\in \mathcal R.$  This shows that $A\cap [x]_{\mathcal S}=\sqcup_{g\in B} \gamma_{g^{-1}}(C_g\cap [x]_{\mathcal R})$, and therefore,
\begin{equation}\label{o2}
\nu_{\mathcal S}(A)=\int_X |A\cap [x]_{\mathcal S}|\;\text{d}\mu(x)=\int_X \sum_{g\in B} |C_g\cap [x]_{\mathcal R}|\;\text{d}\mu(x).  
\end{equation}
Equations \eqref{o1} and \eqref{o2} together with Fubini's theorem show that $ (\nu_{\mathcal R}\times c)(\phi^{-1}(A))=\nu_{\mathcal S}(A)$, proving that $\phi$ is a measure space isomorphism.
Let $V:\text{L}^2(\mathcal S,\nu_{\mathcal S})\to \text{L}^2(\mathcal R\times B, \nu_{\mathcal R}\times c)$ be the unitary operator defined by $V\xi=\xi\circ \phi$, for any $\xi\in \text{L}^2(\mathcal S,\nu_{\mathcal S})$. 

Denote by $u:[\mathcal R]\rightarrow\mathcal U(\text{L}(\mathcal R))$ and $w:[\mathcal S]\rightarrow\mathcal U(\text{L}(\mathcal S))$ the homomorphisms that together with a copy of $\text{L}^\infty(X)$ generate the von Neumann algebras $\text{L}(\mathcal R)$ and $\text{L}(\mathcal S)$, respectively. Denote by $\{u_g\}_{g\in B}$ the canonical unitaries that implement the cocycle action  $(i\circ\gamma,u\circ\omega)$ of $B$ on $\text{L}(\mathcal R)$. Since $\mathcal R$ is a subequivalence relation of $\mathcal S$, we have $[\mathcal R]\subset [\mathcal S]$. It is easy to see that ${\rm Ad}(V^*)$ preserves $\text{L}^\infty(X)$, maps $u_{\theta}$ to $w_{\theta}$ for $\theta\in [\mathcal R]$ and $u_g$ to $w_g$ for $g\in B$. This implies that $V^*( \text{L}(\mathcal R)\rtimes_{i\circ\gamma,u\circ\omega}B) V \subset \text{L}(\mathcal S)$. Since $\mathcal S$ is generated by $[\mathcal R]$ and $B$,  $\text{L}(\mathcal S)$ is generated by $\text{L}^{\infty}(X)$ together with $\{w_\theta\mid \theta\in [\mathcal R]\cup B\}$. Thus, we deduce that $V^*( \text{L}(\mathcal R)\rtimes_{i\circ\gamma,u\circ\omega}B) V = \text{L}(\mathcal S)$, which proves the lemma.
\hfill$\blacksquare$
 

\subsection{Wreath-like product groups as cocycle semidirect products} 
In this subsection we show that wreath-like products admit a special cocycle semidirect product decomposition.  
For groups $A$ and $B$, we denote by $\sigma$ the {\it shift} action of $B$ on $A^B=\prod_{b\in B}A$ given by $\sigma_b(x)=(x_{b^{-1}c})_{c\in B}$, for every $x=(x_c)_{c\in B}\in A^B$ and $b\in B$. Note that $\sigma$ leaves invariant the normal subgroup $A^{(B)}=\oplus_{b\in B}A$ of $A^B$.
We start by recording the following direct consequence of Lemma \ref{semi}:

\begin{corollary}\label{wre}
Let $A,B$ be groups. A group $G$ belongs to $\mathcal W\mathcal R(A,B)$ if and only if it is isomorphic to $A^{(B)}\rtimes_{\alpha,v}B$, for a cocycle action $(\alpha,v)$ on $B$ on $A^{(B)}$ such that $\alpha_b(A_c)=A_{bc}$, for every $b,c\in B$.
\end{corollary}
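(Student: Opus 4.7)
The plan is to prove both implications by invoking Lemma \ref{semi} for the forward direction and carrying out a short direct computation in the cocycle semidirect product for the backward direction. The two conditions to match are the wreath-like condition $g A_c g^{-1} = A_{\varepsilon(g)c}$ on the one hand and the condition $\alpha_b(A_c) = A_{bc}$ on the other; everything else is provided by the general extension-vs-cocycle-semidirect-product correspondence.

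For the forward implication, suppose $G \in \mathcal{WR}(A,B)$, so we have a short exact sequence $\{e\}\to\oplus_{b\in B}A_b \to G \xrightarrow{\varepsilon} B \to \{e\}$ with $A_b \cong A$ and $gA_bg^{-1}=A_{\varepsilon(g)b}$. Identifying $\oplus_{b\in B} A_b$ with $A^{(B)}$ in the canonical way, I would pick any set-theoretic lift $k:B\to G$ with $\varepsilon(k_b)=b$ and apply Lemma \ref{semi}. This produces a cocycle action $(\alpha,v)$ of $B$ on $A^{(B)}$ with $\alpha_b = \mathrm{Ad}(k_b)|_{A^{(B)}}$ and an isomorphism $A^{(B)}\rtimes_{\alpha,v} B \cong G$. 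Applying the wreath-like condition to $g=k_b$ then gives $\alpha_b(A_c) = k_b A_c k_b^{-1} = A_{\varepsilon(k_b)c}=A_{bc}$, as required.

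For the backward implication, suppose $G=A^{(B)}\rtimes_{\alpha,v}B$ with $\alpha_b(A_c)=A_{bc}$. Definition \ref{csproduct} already furnishes the short exact sequence $\{e\}\to A^{(B)}\xrightarrow{i} G\xrightarrow{\gamma}B\to\{e\}$, and $A_c\cong A$ by construction, so only the conjugation action needs verification. Writing $g=(x,b)=(x,e)(e,b)$, a short computation using the axioms $v_{b,e}=e$ and $v_{e,e}=e$, together with the fact that the cocycle-action axioms force $\alpha_e=\mathrm{id}$, shows that $(e,b)(a,e)(e,b)^{-1}=(\alpha_b(a),e)$ and $(x,e)(a,e)(x,e)^{-1}=(xax^{-1},e)$. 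By hypothesis the first step sends $A_c$ onto $A_{bc}$, and since $A_{bc}$ is a direct summand of $A^{(B)}=\oplus_{d\in B}A_d$ the second step (an inner automorphism of $A^{(B)}$) preserves it. Hence $g A_c g^{-1}=A_{bc}=A_{\gamma(g)c}$, which is the wreath-like condition.

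There isn't really a serious obstacle here: the statement is essentially a bookkeeping translation of Lemma \ref{semi} into the wreath-like setting, with the one nontrivial observation being that inner conjugation in a direct sum of groups preserves each direct summand, which makes the $A^{(B)}$-factor of $g$ irrelevant for the action on the $A_c$'s.
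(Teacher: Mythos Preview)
Your proof is correct and follows exactly the approach the paper intends: the paper states this corollary as a ``direct consequence of Lemma~\ref{semi}'' without further proof, and you have simply written out the details of both directions, using Lemma~\ref{semi} for the forward implication and a short computation in the cocycle semidirect product for the converse.
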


We continue with an example of cocycle actions satisfying Corollary \ref{wre}.

\begin{lemma}\label{ex}
Let $A,B$ be groups. Let $\rho:B\rightarrow A^B$ be a map such that $v_{b,c}:=\rho_b\sigma_b(\rho_c)\rho_{bc}^{-1}\in A^{(B)}$, for every $b,c\in B$, and $\rho_e=e$.
For $b\in B$, define $\alpha_b:=\emph{Ad}(\rho_b)\sigma_b\in\emph{Aut}(A^{(B)})$. Then $(\alpha,v)$ is a cocycle action of $B$ on $A^{(B)}$ and the cocycle semidirect product $G=A^{(B)}\rtimes_{\alpha,v}B$ belongs $\mathcal W\mathcal R(A,B)$.
\end{lemma}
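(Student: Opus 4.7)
The strategy is to reduce to Corollary \ref{wre}: once I verify that $(\alpha,v)$ is a cocycle action of $B$ on $A^{(B)}$ and that $\alpha_b(A_c)=A_{bc}$ for all $b,c\in B$, the corollary delivers $G\in\mathcal W\mathcal R(A,B)$. So the plan breaks into checking the three cocycle axioms and the compatibility with the fibers $A_c$, each of which should follow from a direct computation using the defining relation $\rho_b\sigma_b(\rho_c)=v_{b,c}\rho_{bc}$ together with $\rho_e=e$.

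First I would observe that $\alpha_b$ really is an automorphism of $A^{(B)}$: the shift $\sigma_b$ stabilizes $A^{(B)}$, and conjugation by an arbitrary element of $A^B$ acts coordinate by coordinate on $A^{(B)}$, so it also preserves finite support. For axiom (1), I would use the general identity $\sigma_b\circ\text{Ad}(\rho_c)=\text{Ad}(\sigma_b(\rho_c))\circ\sigma_b$ to collapse
\[
\alpha_b\alpha_c = \text{Ad}(\rho_b)\,\sigma_b\,\text{Ad}(\rho_c)\,\sigma_c = \text{Ad}\bigl(\rho_b\sigma_b(\rho_c)\bigr)\,\sigma_{bc},
\]
then substitute $\rho_b\sigma_b(\rho_c)=v_{b,c}\rho_{bc}$ to read off $\text{Ad}(v_{b,c})\,\alpha_{bc}$. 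Axiom (3) is immediate from $\rho_e=e$ and $\sigma_e=\text{Id}$.

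Axiom (2) is the only step with nontrivial bookkeeping, and I expect it to be the main (mild) obstacle. The plan is to expand $v_{b,c}v_{bc,d}$ directly, note that the inner factor $\rho_{bc}^{-1}\rho_{bc}$ telescopes, and use $\sigma_b\sigma_c=\sigma_{bc}$ together with the fact that $\sigma_b$ is a homomorphism to arrive at $\rho_b\sigma_b(\rho_c\sigma_c(\rho_d))\rho_{bcd}^{-1}$. On the other side, applying $\alpha_b=\text{Ad}(\rho_b)\sigma_b$ to $v_{c,d}$ and multiplying by $v_{b,cd}$, a cancellation $\sigma_b(\rho_{cd}^{-1})\cdot\sigma_b(\rho_{cd})$ reproduces exactly the same expression. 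Nothing beyond careful manipulation should be required.

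Finally, for Corollary \ref{wre}'s compatibility condition, I would observe that $\sigma_b(A_c)=A_{bc}$ straight from the definition of the shift, while any element of $A^B$ acts on $A^{(B)}$ by coordinatewise conjugation and hence stabilizes each $A_d$ (it acts on $A_d$ as inner conjugation by the $d$-th coordinate). Therefore $\alpha_b(A_c)=\text{Ad}(\rho_b)(A_{bc})=A_{bc}$, and Corollary \ref{wre} concludes that $G=A^{(B)}\rtimes_{\alpha,v}B$ lies in $\mathcal W\mathcal R(A,B)$. I do not anticipate a genuine obstacle: the hypothesis $v_{b,c}\in A^{(B)}$ is precisely what makes $(\alpha,v)$ a cocycle action on the finite-support subgroup rather than only on $A^B$, and everything else is formal.
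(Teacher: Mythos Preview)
Your proposal is correct and follows exactly the approach in the paper: verify directly that $(\alpha,v)$ is a cocycle action, observe that $\alpha_b(A_c)=A_{bc}$, and invoke Corollary~\ref{wre}. The paper simply declares both verifications ``straightforward'' and ``clear'' without writing out the computations you have outlined.
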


{\it Proof.}
It is straightforward to check that $(\alpha, v)$ is a cocycle action. 
Since  clearly $\alpha_b(A_c)=A_{bc}$, for every $b,c\in B$, Corollary \ref{wre}  implies that $G\in\mathcal W\mathcal R(A,B)$.
\hfill$\blacksquare$

The following key result, obtained in a preliminary version of \cite{CIOS21}, shows that any wreath-like product group admits a decomposition as in Lemma \ref{ex}:

\begin{lemma}\emph{\cite{CIOS21}}\label{special} Let $G\in\mathcal W\mathcal R(A,B)$, for groups $A,B$. Then there is $\rho:B\rightarrow A^B$ such that $v_{b,c}:=\rho_b\sigma_b(\rho_c)\rho_{bc}^{-1}\in A^{(B)}$, for every $b,c\in B$, and $\rho_e=e$, and letting  $\alpha_b:=\emph{Ad}(\rho_b)\sigma_b\in\emph{Aut}(A^{(B)})$, for every $b\in B$, we have that  $G\cong A^{(B)}\rtimes_{\alpha,v}B$.
\end{lemma}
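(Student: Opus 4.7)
The plan is to exhibit an appropriate set-theoretic section of the extension $\{e\}\to A^{(B)}\to G\to B\to\{e\}$ and then rewrite its Lemma \ref{semi} cocycle data in the shift-twisted form by choosing the internal identifications $A_b\cong A$ compatibly with that section.

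First I would fix a section $k:B\to G$ with $k_e=e$ and $\varepsilon(k_b)=b$. By Lemma~\ref{semi}, this produces a cocycle action $(\alpha',v')$ of $B$ on $N:=\oplus_{b}A_b$, with $\alpha'_b=\Ad(k_b)|_N$ and $v'_{b,c}=k_bk_ck_{bc}^{-1}\in N$, such that $G\cong N\rtimes_{\alpha',v'}B$. Next, fix one isomorphism $\iota_e:A\to A_e$ and transport it using the section by defining $\iota_b:A\to A_b$ by $\iota_b(a):=k_b\iota_e(a)k_b^{-1}$; this gives an identification $A^{(B)}\cong N$ via $(a_c)_c\mapsto\prod_c\iota_c(a_c)$. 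A direct computation using $k_bk_c=v'_{b,c}k_{bc}$ and the fact that distinct $A_d$'s commute inside $N$ yields
\[
k_b\,\iota_c(a)\,k_b^{-1}=\pi_{bc}(v'_{b,c})\,\iota_{bc}(a)\,\pi_{bc}(v'_{b,c})^{-1},
\]
where $\pi_d:N\to A_d$ denotes the projection. Setting $\rho_b\in A^B$ coordinatewise by $\rho_{b,d}:=\iota_d^{-1}(\pi_d(v'_{b,b^{-1}d}))$, one reads off that after the transport $\alpha'_b$ becomes exactly $\Ad(\rho_b)\sigma_b$ on $A^{(B)}$. Moreover $\rho_e=e$ because $v'_{e,c}=e$ for every $c$.

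The substantive step, which I expect to be the main obstacle, is to check that $v_{b,c}:=\rho_b\sigma_b(\rho_c)\rho_{bc}^{-1}$, computed \emph{a priori} in the infinite product $A^B$, actually lies in $A^{(B)}$; in fact I claim it corresponds to $v'_{b,c}$ under the identification. The difficulty is that each $\rho_b$ can have infinite support in $A^B$, so every coordinate of the product is potentially nonzero, and one needs to extract cancellation from the extension data itself. To do this, I would apply the $2$-cocycle identity $v'_{b,c}v'_{bc,f}=\alpha'_b(v'_{c,f})v'_{b,cf}$, project it onto $A_d$ using $\pi_d\circ\alpha'_b=\alpha'_b\circ\pi_{b^{-1}d}$, and then specialize $f=c^{-1}b^{-1}d$. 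Rewriting the three projected terms through the defining formula for $\rho$ and applying $\iota_d^{-1}$, the inner automorphism by $\rho_{b,d}$ coming from $\alpha'_b$ cancels with its inverse, leaving
\[
\iota_d^{-1}(\pi_d(v'_{b,c}))=\rho_{b,d}\,\rho_{c,b^{-1}d}\,\rho_{bc,d}^{-1}
\]
for every $d\in B$. This is precisely the equality of $d$-th coordinates of $v_{b,c}$ and of the $\iota$-image of $v'_{b,c}$, so finite support of $v_{b,c}$ is inherited from $v'_{b,c}\in N$, and the cocycle actions $(\alpha',v')$ on $N$ and $(\alpha,v)$ on $A^{(B)}$ are conjugated by the identification. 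Lemma \ref{semi} then gives $G\cong A^{(B)}\rtimes_{\alpha,v}B$, which is the claimed decomposition.
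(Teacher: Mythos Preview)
Your proof is correct and follows essentially the same route as the paper: both choose a section $k:B\to G$ with $k_e=e$, transport the identifications $A\cong A_b$ via $\iota_b=\Ad(k_b)\iota_e$, and arrive at the very same $\rho$ (your $\rho_{b,d}=\iota_d^{-1}(\pi_d(v'_{b,b^{-1}d}))$ unwinds to the paper's $\pi(k_d^{-1}k_bk_{b^{-1}d})$). The only organizational difference is in how the key identity $\iota_d^{-1}(\pi_d(v'_{b,c}))=\rho_{b,d}\rho_{c,b^{-1}d}\rho_{bc,d}^{-1}$ is established: the paper introduces a map $\tau:G\to A_e^B$, checks it is a $1$-cocycle for $\sigma\circ\varepsilon$ on all of $G$, and reads off $v_{b,c}=\tau_{k_bk_c}\tau_{k_{bc}}^{-1}=\varphi^{-1}(k_bk_ck_{bc}^{-1})$; you instead project the $2$-cocycle identity for $v'$ to $A_d$ and specialize $f=c^{-1}b^{-1}d$. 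Both arguments yield the same coordinatewise identity and hence the finite-support conclusion.
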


{\it Proof.}
We have a short exact sequence $\{e\}\rightarrow\oplus_{b\in B}A_b\rightarrow G\xrightarrow{\varepsilon}B\rightarrow\{e\}$ such that $A_b\cong A$ and $gA_bg^{-1}=A_{\varepsilon(g)b}$, for every $b\in B$ and $g\in G$.  For $b\in B$, fix $k_b\in G$ with $\varepsilon(k_b)=b$. Take $k_e=e$.

Let $\varphi_b:A_e\rightarrow A_b$ be the isomorphism given by $\varphi_b(x)=k_bxk_b^{-1}$. Define $\varphi:A_e^{(B)}\rightarrow \oplus_{b\in B}A_b$ by $$\varphi=\oplus_{b\in B}\varphi_b.$$
If $g\in G$ and $b\in B$, then $\varepsilon(k_b^{-1}gk_{\varepsilon(g)^{-1}b})=e$ and thus $k_b^{-1}gk_{\varepsilon(g)^{-1}b}\in \oplus_{b\in B}A_b$. Denote by $\pi:\oplus_{b\in B}A_b\rightarrow A_e$ the quotient homomorphism given by $\pi((x_b)_{b\in B})=x_e$. Define a map $\tau:G\rightarrow A_e^B$ by letting 
$$\text{$\tau_g=(\pi(k_b^{-1}gk_{\varepsilon(g)^{-1}b}))_{b\in B}$, for every $g\in G$.}$$
If $g\in G$ and $x\in \oplus_{b\in B}A_b$, then $\tau_{gx}\tau_g^{-1}=(\pi(k_b^{-1}gxg^{-1}k_b))_{b\in B}$.
Write $gxg^{-1}=(y_c)_{c\in B}\in \oplus_{b\in B}A_b$. Since $k_b^{-1}y_ck_b\in A_{b^{-1}c}$, we get that $\pi(k_b^{-1}gxg^{-1}k_b)=k_b^{-1}y_bk_b$. Thus, $\pi(k_b^{-1}gxg^{-1}k_b)=e$ if $y_b=e$ and $\varphi_b(\pi(k_b^{-1}gxg^{-1}k_b))=y_b$, for every $b\in B$.   This implies that 
\begin{equation}\label{tau}
\text{$\tau_{gx}\tau_g^{-1}\in A_e^{(B)}$ and $\varphi(\tau_{gx}\tau_g^{-1})=gxg^{-1}$, for every $g\in G$ and $x\in \oplus_{b\in B}A_b$.}
\end{equation}
Now, a direct calculation shows that $\tau$ is a $1$-cocycle for  the action $\sigma\circ\varepsilon:G\rightarrow\text{Aut}(A_e^B)$:
 \begin{equation}\label{cocycletau} \text{$\tau_{g_1}\sigma_{\varepsilon(g_1)}(\tau_{g_2})=\tau_{g_1g_2}$, for every $g_1,g_2\in G$.}
\end{equation}
This follows by using that $(\sigma_{\varepsilon(g_1)}(\tau_{g_2}))_b=(\tau_{g_2})_{\varepsilon(g_1)^{-1}b}=\pi(k_{\varepsilon(g_1)^{-1}b}^{-1}g_2k_{\varepsilon(g_1g_2)^{-1}b})$, for every $b\in B$.

Moreover, as automorphisms of $A_e^B$, we have that 
\begin{equation}\label{conjugat} \text{$\varphi^{-1}\circ\text{Ad}(g)\circ\varphi=\text{Ad}(\tau_g)\circ\sigma_{\varepsilon(g)}$, for every $g\in G$.}
\end{equation}
To justify this, note that $\pi(y)a\pi(y)^{-1}=yay^{-1}$, for every $a\in A_e$ and $y=(y_b)_{b\in B}\in\oplus_{b\in B}A_b$.
Thus, if $x=(x_b)_{b\in B}\in A_e^{(B)}$, then $$((\text{Ad}(\tau_g)\circ\sigma_{\varepsilon(g)})(x))_b=\text{Ad}(\pi(k_b^{-1}gk_{\varepsilon(g)^{-1}b}))(x_{\varepsilon(g)^{-1}b})=\text{Ad}(k_b^{-1}gk_{\varepsilon(g)^{-1}b})(x_{\varepsilon(g)^{-1}b}),$$ and so $((\varphi\circ\text{Ad}(\tau_g)\circ\sigma_{\varepsilon(g)})(x))_b=\text{Ad}(gk_{\varepsilon(g)^{-1}b})(x_{\varepsilon(g)^{-1}b})=((\text{Ad}(g)\circ\varphi)(x))_b$. This proves \eqref{conjugat}.

Define $\rho:B\rightarrow A_e^B$ by  $\rho_b=\tau_{k_b}$, for $b\in B$. Let $v:B\times B\rightarrow A_e^B$ and $\alpha:B\rightarrow\text{Aut}(A_e^{(B)})$ be given by $v_{b,c}=\rho_b\sigma_b(\rho_c)\rho_{bc}^{-1}$ and $\alpha_b=\text{Ad}(\rho_b)\sigma_b$, for $b,c\in B$.
Then using \eqref{tau} and \eqref{cocycletau} we get \begin{equation}\label{v}
\text{$v_{b,c}=\tau_{k_b}\sigma_{\varepsilon(k_b)}(\tau_{k_c})\tau_{k_{bc}}^{-1}=\tau_{k_bk_c}\tau_{k_{bc}}^{-1}=\varphi^{-1}(k_bk_bk_{bc}^{-1})\in A_e^{(B)}$, for every $b,c\in B$.}
\end{equation}

Define $\theta:A_e^{(B)}\rtimes_{\alpha, v}B\rightarrow G$ by letting $\theta(x,b)=\varphi(x)k_b$.
If $(x,b),(y,c)\in A_e^{(B)}\rtimes_{\alpha,v}B$, then \eqref{conjugat} implies that $\text{Ad}(k_b)\circ\varphi=\varphi\circ\text{Ad}(\tau_{k_b})\circ\sigma_{\varepsilon(k_b)}=\varphi\circ\alpha_b$.  In combination with \eqref{v}, we derive that
\begin{align*}\theta(x,b)\theta(y,c)=\varphi(x)k_b\varphi(y)k_c&=\varphi(x)\text{Ad}(k_b)(\varphi(y))k_bk_c\\&=\varphi(x\alpha_b(y))k_bk_c=\varphi(x\alpha_b(y)v_{b,c})k_{bc}=\theta((x,b)\cdot (y,c)). \end{align*}
This proves that $\theta$ is a homomorphism. Since $\theta$ is bijective, it follows that $\theta$ is an isomorphism.
\hfill$\blacksquare$

\begin{remark}
As a consequence of \cite[Lemma 4.1]{CIOS21}, if $\psi:A\rightarrow A_0$ is an onto homomorphism and $G\in\mathcal W\mathcal R(A,B)$, then we can find $G_0\in\mathcal W\mathcal R(A_0,B)$ and an onto homomorphism $\widetilde\psi:G\rightarrow G_0$ which extends $\psi^{B}:A^{(B)}\rightarrow A_0^{(B)}$. Lemma \ref{special} implies the same for arbitrary, not necessarily onto, homomorphisms $\psi:A\rightarrow A_0$. Indeed, let $\rho:B\rightarrow A^B$ be as in Lemma \ref{special}. For $b,c\in B$, let $\tau_b=\psi^B(\rho_b)\in A_0^B$, $\beta_b=\text{Ad}(\tau_b)\sigma_b\in\text{Aut}(A_0^{(B)})$ and $w_{b,c}=\tau_b\sigma_b(\tau_b)\tau_{bc}^{-1}=\psi^B(v_{b,c})\in A_0^{(B)}$. 
Then $(\beta,w)$ is a cocycle action of $B$ on $A_0^{(B)}$, $G_0=A_0^{(B)}\rtimes_{\beta,w}B\in\mathcal W\mathcal R(A_0,B)$  and $\widetilde\psi(a,b)=(\psi^B(a),b)$, for $a\in A^{(B)}$ and $b\in B$, defines a homomorphism $\widetilde\psi:G\rightarrow G_0$ which extends $\psi^B:A^{(B)}\rightarrow A_0^{(B)}$.
\end{remark}

We end this section by using Lemma \ref{special} to characterise wreath-like products $G\in\mathcal W\mathcal R(A,B)$ as certain subgroups of the unrestricted wreath product group $A^B\rtimes_\sigma B$.

\begin{corollary}
Let $A,B$ be groups and denote by $\delta:A^B\rtimes_\sigma B\rightarrow B$ the quotient homomorphism. Then a group belongs to $\mathcal W\mathcal R(A,B)$ if and only if it is isomorphic to a subgroup $G$ of $A^B\rtimes_\sigma B$ which satisfies that  $A^{(B)}\subset G$, $\delta(G)=B$ and $\ker(\delta_{|G})=A^{(B)}$. 
\end{corollary}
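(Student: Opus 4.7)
The plan is to prove both implications by a direct construction, leveraging the decomposition from Lemma \ref{special}.

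For the forward implication, given $G \in \mathcal{WR}(A,B)$, I would invoke Lemma \ref{special} to identify $G$ with $A^{(B)} \rtimes_{\alpha, v} B$, where $\alpha_b = \operatorname{Ad}(\rho_b)\sigma_b$ and $v_{b,c} = \rho_b \sigma_b(\rho_c) \rho_{bc}^{-1} \in A^{(B)}$ for a map $\rho \colon B \to A^B$ with $\rho_e = e$. The natural candidate for the embedding into $A^B \rtimes_\sigma B$ is the ``untwisting'' map
\[
\Phi \colon A^{(B)} \rtimes_{\alpha,v} B \longrightarrow A^B \rtimes_\sigma B, \qquad \Phi(x,b) = (x \rho_b, b).
\]
I would verify $\Phi$ is a homomorphism by a direct multiplication check: expanding $\Phi(x,b)\Phi(y,c) = (x\rho_b \sigma_b(y)\sigma_b(\rho_c), bc)$ and inserting $\rho_b^{-1}\rho_b$ in the middle recovers $(x \alpha_b(y) v_{b,c} \rho_{bc}, bc) = \Phi((x,b)(y,c))$. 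Injectivity is immediate since $\rho_e = e$. The image $G := \Phi(A^{(B)} \rtimes_{\alpha,v} B)$ contains $A^{(B)}$ (take $b = e$), satisfies $\delta(G) = B$ (by surjectivity of the $B$-coordinate), and $\ker(\delta_{|G}) = A^{(B)}$ since the only elements of $G$ with trivial $B$-component are those with $b = e$.

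For the converse, suppose $G \le A^B \rtimes_\sigma B$ satisfies the three conditions. Then $\delta_{|G}$ yields a short exact sequence $\{e\} \to A^{(B)} \to G \xrightarrow{\delta_{|G}} B \to \{e\}$. For each $b \in B$, let $A_b \subset A^{(B)}$ denote the copy of $A$ supported at coordinate $b$, so that $A_b \cong A$ and $A^{(B)} = \oplus_{b \in B} A_b$. It remains to check $gA_b g^{-1} = A_{\delta(g)b}$ for all $g \in G$ and $b \in B$. Writing $g = (f, c)$ with $f \in A^B$ and $c = \delta(g) \in B$, a direct computation in $A^B \rtimes_\sigma B$ gives $(f,c)(a,e)(f,c)^{-1} = (f \sigma_c(a) f^{-1}, e)$ for any $a \in A^{(B)}$. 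The shift $\sigma_c$ carries $A_b$ onto $A_{cb}$, and conjugation by $f \in A^B$ preserves each $A_{cb}$ (acting coordinatewise), so the conjugation map is an automorphism of $A_{cb}$. This gives $gA_bg^{-1} = A_{cb} = A_{\delta(g)b}$, as required.

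Neither direction presents a significant obstacle: the forward implication is an almost mechanical consequence of Lemma \ref{special} once one guesses the untwisting formula $\Phi(x,b) = (x\rho_b, b)$, and the converse is a straightforward computation in the unrestricted wreath product. The only point that requires care is tracking the cocycle identity $\alpha_b(y)v_{b,c} = \rho_b \sigma_b(y) \sigma_b(\rho_c) \rho_{bc}^{-1}$ in the forward direction, which is precisely what makes $\Phi$ multiplicative.
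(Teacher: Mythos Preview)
Your proposal is correct and follows essentially the same approach as the paper: for the converse you verify the conjugation identity $(f,c)A_b(f,c)^{-1}=A_{cb}$ directly in $A^B\rtimes_\sigma B$, and for the forward implication you invoke Lemma~\ref{special} and define exactly the embedding $\Phi(x,b)=(x\rho_b,b)$ that the paper calls $\lambda$. Your write-up simply supplies more detail than the paper's terse version.
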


{\it Proof.} If $x\in A^B,b\in B$, then $(x,b)(A^c)(x,b)^{-1}=A^{bc}$. This implies that any subgroup $G$ of $A^B\rtimes_\sigma B$ such that $A^{(B)}\subset G$, $\delta(G)=B$ and $\ker(\delta_{|G})=A^{(B)}$ belongs to $\mathcal W\mathcal R(A,B)$.

Conversely, let $G\in\mathcal W\mathcal R(A,B)$ and represent it as $G=A^{(B)}\rtimes_{\alpha,v}B$ as in Lemma \ref{special}. Then $\lambda:G\rightarrow A^B\rtimes_\sigma B$ given by $\lambda(a,b)=(a\rho_b,b)$, for every $a\in A^{(B)}, b\in B$, is an injective homomorphism. Since $A^{(B)}\subset\lambda(G)$, $\delta(\lambda(G))=B$ and $\ker(\delta_{|\lambda(G)})=A^{(B)}$, the conclusion follows.
\hfill$\blacksquare$

\section{Wreath-like product von Neumann algebras}\label{wrelike}
In this section, motivated by Corollary \ref{ex}, we introduce and study a notion of wreath-like product for tracial von Neumann algebras.

\begin{definition}
Let $(M,\tau)$ be a tracial von Neumann algebra and $B$ be a group. A tracial von Neumann algebra $N$ is said to be a {\it wreath-like product of $M$ and $B$} if it is isomorphic to $M^B\rtimes_{\beta,w}B$, where $(\beta,w)$ is a cocycle action of $B$ on $M^B$ such that $\beta_b(M^c)=M^{bc}$, for every $b,c\in B$. We denote by  $\mathcal W\mathcal R(M,B)$ the class of all wreath-like products of $M$ and $B$.
\end{definition}

\begin{example}
Let $A,B$ be groups and $G\in\mathcal W\mathcal R(A,B)$. Then $\text{L}(G)\in\mathcal W\mathcal R(\text{L}(A),B)$.
To see this, by Corollary \ref{wre} we have $G\cong A^{(B)}\rtimes_{\alpha,v}B$, for a cocycle action $(\alpha,v)$ on $B$ on $A^{(B)}$ such that $\alpha_b(A_c)=A_{bc}$, for every $b,c\in B$. 
Let $(u_g)_{g\in G}\subset\mathcal U(\text{L}(G))$ be the canonical generating unitaries.
For $b,c\in B$, let $\beta_b$ be the automorphism of $\text{L}(A^{(B)})=\text{L}(A)^B$ induced by $\alpha_b$ and let $w_{b,c}=u_{v_{b,c}}$.
Since $\text{L}(G)\cong\text{L}(A)^B\rtimes_{\beta,w}B$ and $\beta_b(M^c)=M^{bc}$, for every $b,c\in B$, it follows that $\text{L}(G)\in\mathcal W\mathcal R(\text{L}(A),B)$.
This example will be significantly generalized in the main result of this section, Proposition \ref{extend}.
\end{example}

\begin{lemma}\label{factor}
Let $(M,\tau)$ be a non-trivial tracial von Neumann algebra, $B$ an infinite group and $N\in\mathcal W\mathcal R(M,B)$.
Then $N$ is a II$_1$ factor. Moreover, if $M$ is a factor, then  $M^B\subset N$ is a regular irreducible subfactor. 
\end{lemma}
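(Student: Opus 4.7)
The plan is to show $Z(N) = \mathbb{C}$, then deduce the moreover statement. For any $x \in Z(N)$, Fourier-expand $x = \sum_{b \in B} x_b u_b$ with $x_b \in M^B$ (in $L^2$-sense). Commutation of $x$ with $M^B$ yields $x_b \beta_b(m) = m x_b$ for all $b \in B$ and $m \in M^B$, while commutation with each $u_c$ produces companion identities on the Fourier coefficients; specifically, $\beta_c(x_e) = x_e$ for every $c \in B$ (using $w_{e,c} = w_{c,e} = 1$).

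For the coefficient at $b = e$, we thus have $x_e \in Z(M^B) = Z(M)^B$ and $\beta_c(x_e) = x_e$ for every $c \in B$. Ergodicity of the $B$-action on $Z(M)^B$ follows from a Bernoulli-shift argument: in the $L^2$-decomposition of $Z(M)^B$ indexed by finite supports $S \subset B$, $\beta_b$ permutes the support-$S$ component to the support-$bS$ component, and since $B$ acts freely on itself and is infinite, every non-empty support orbit is infinite, so $L^2$-summability kills every non-trivial mode. Hence $x_e \in \mathbb{C}$.

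For $b \neq e$, I claim $x_b = 0$ via proper outerness of $\beta_b$ on $M^B$. Trace cyclicity applied to the intertwiner relation gives $\tau((\beta_b^{-1}(x_b) - x_b) m) = 0$ for all $m \in M^B$, hence $\beta_b(x_b) = x_b$; combined with the identities $x_b^*x_b \in Z(M^B)$ and $\beta_b(x_b^*x_b) = x_b x_b^*$ (both consequences of the intertwiner relation via trace cyclicity), $x_b$ is normal with $z := x_b^* x_b \in Z(M^B)^{\beta_b}$. The polar decomposition $x_b = w z^{1/2}$ produces a partial isometry $w \in M^B$ with support projection $p := s(z)$ satisfying $wmw^* = p\,\beta_b(m)$ for all $m \in M^B$. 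To force $p = 0$, I use a commutator estimate combined with a conditional-expectation computation: for $\epsilon > 0$, pick finite $F \subset B$ so that $w$ and $p$ are each $\epsilon$-close in $L^2$ to their conditional expectations onto $M^F$ and $Z(M)^F$ respectively; choose $c \in B$ with $\{c, bc\} \cap F = \emptyset$ and a non-scalar unitary $u \in M^c$ (available in any non-trivial $M$; take for instance $u = 2q - 1$ for a non-trivial projection $q$). The standard commutator bound gives $\|p(\beta_b(u) - u)\|_2 = \|wuw^* - up\|_2 = \|[w,u]w^*\|_2 \leq 2\epsilon$, while evaluating the left-hand side using $E_{Z(M)^{\{c,bc\}}}(p) \approx \tau(p) \cdot 1$ (since the essential supports are disjoint from $\{c, bc\}$) produces $\|p(\beta_b(u) - u)\|_2^2 \approx 2\tau(p)(1 - |\tau(u)|^2)$. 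Since $|\tau(u)| < 1$, letting $\epsilon \to 0$ forces $\tau(p) = 0$, hence $x_b = 0$.

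Combining these, $Z(N) = \mathbb{C}$, and since $N$ contains the infinite-dimensional $M^B$ and has a faithful normal tracial state inherited from the cocycle crossed product construction, it is a II$_1$ factor. For the moreover part, when $M$ is a factor one has $Z(M^B) = \mathbb{C}$, so applying the same Fourier argument to $x \in (M^B)' \cap N$ (the ergodicity step is unnecessary since $x_e \in Z(M^B) = \mathbb{C}$ is automatic) yields $(M^B)' \cap N = \mathbb{C}$, i.e.\ irreducibility. Regularity is immediate: each $u_b$ normalizes $M^B$ via $\beta_b \in \text{Aut}(M^B)$, and $N$ is generated by $M^B$ together with $\{u_b\}_{b \in B}$. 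The main technical obstacle is the proper-outerness estimate in the non-factor case, where the support projection $p$ lives in the non-trivial center $Z(M^B)$ and the conditional-expectation approximation must carefully exploit the shift structure $\beta_b(M^c) = M^{bc}$.
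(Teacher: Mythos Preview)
Your argument is correct, but it is substantially more elaborate than the paper's. Both proofs start from the Fourier decomposition and the intertwiner relation $x_b\beta_b(m)=mx_b$, but for $b\neq e$ the paper dispenses with polar decompositions, support projections, and conditional-expectation approximations: it simply picks a non-scalar unitary $y\in M$, sets $y_n=i_{c_n}(y)\in M^{c_n}$ for a sequence $c_n\to\infty$, and computes directly
\[
\|x_b\|_2^2=\langle x_b\beta_b(y_n),\,y_nx_b\rangle\longrightarrow |\tau(y)|^2\|x_b\|_2^2,
\]
the limit following from tensor independence once $c_n$ and $bc_n$ leave any fixed finite support of $x_b$; since $|\tau(y)|<1$ this forces $x_b=0$ in one line. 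For the trivial coefficient, the paper again avoids your $L^2$-support decomposition of $Z(M)^B$: once $x=x_e\in M^B$ is known to be $\beta_{c_n}$-invariant, it computes $\|x\|_2^2=\langle\beta_{c_n}(x),x\rangle\to|\tau(x)|^2$, hence $x\in\mathbb C$. Your route has the merit of making the ``proper outerness'' mechanism explicit and would adapt to more general intertwiner situations, but here the shift structure $\beta_b(M^c)=M^{bc}$ makes the paper's direct inner-product estimate much shorter. (A minor slip: your displayed relation $wmw^*=p\,\beta_b(m)$ should read $w^*mw=p\,\beta_b(m)$, coming from $\beta_b(m)w^*=w^*m$; this does not affect the estimate.) The paper also records the intermediate statement $(M^B)'\cap N\subset M^B$ first, which yields both factoriality and irreducibility immediately.
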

A subfactor $P\subset N$ is called {\it regular} if its normalizer, $\mathcal N_N(P)=\{u\in\mathcal U(N)\mid uPu^*=P\}$, generates $N$ and {\it irreducible} if $P'\cap N=\mathbb C1$.

{\it Proof.} We claim that $(M^B)'\cap N\subset M^B$. To this end, fix a sequence $(c_n)\subset B$ with $c_n\rightarrow\infty$.  Let $x\in (M^B)'\cap N$ and consider its Fourier decomposition $x=\sum_{b\in B}x_bu_b$, where $x_b\in M^B$, for $b\in B$. 

Let $b\in B\setminus\{e\}$. Then $x_b\beta_b(y)=yx_b$, for every $y\in M^B$ and $b\in B$. Let $y\in \mathcal U(M)\setminus\mathbb T1$. For $c\in B$, let $i_c:M\rightarrow M^c$ be the canonical isomorphism.
Let $y_n=i_{c_n}(y)\in\mathcal U(M^B)$.
As $y_n\in \mathcal U(M^{c_n}),\beta_b(y_n)\in \mathcal U(M^{bc_n})$, $\tau(y_n)=\tau(\beta_b(y_n))=\tau(y)$ and $bc_n\not=c_n$, for every $n$, we get that
$\|x_b\|_2^2=\langle x_b\beta_b(y_n),y_nx_b\rangle\rightarrow |\tau(y)|^2\|x_b\|_2^2.$ Since $|\tau(y)|<1$, this gives that $x_b=0$. Since this holds for every $b\in B\setminus\{e\}$, we derive that $x=x_e\in M^B$, which proves the claim. 

If $x\in N$ is central, then $x\in M^B$ by the claim.
Since $\beta_{c_n}(M^F)=M^{c_nF}$ and $\beta_{c_n}(x)=u_{c_n}xu_{c_n}^*=x$, for every $F\subset B$ and $n$, we derive that $\|x\|_2^2=\langle \beta_{c_n}(x),x\rangle\rightarrow |\tau(x)|^2$. Thus, $x\in\mathbb C1$, proving that $N$ is a factor. As $M^B\subset N$ is regular, the claim implies the moreover assertion, finishing the proof. 
\hfill$\blacksquare$

\begin{remark}\label{remark.relative.commutant}
Note that the proof of Lemma \ref{factor} shows that $(M^S)'\cap N\subset M^B$ for any infinite subset $S\subset B$.
\end{remark}

Our next goal is to give constructions of wreath-like product von Neumann algebras.

\begin{notation}\label{notatie}
Let  $(M,\tau)$ be a tracial von Neumann algebra and $B$ be a group. We denote by 
\begin{enumerate}
\item $\gamma:\mathcal U(M)^{(B)}\rightarrow\mathcal U(M^B)$ the homomorphism given by $\gamma((x_b)_{b\in B})=\otimes_{b\in B}x_b$.
\item $\eta:\mathcal U(M)^B\rightarrow\text{Aut}(M^B)$ the homomorphism given by  $\eta((y_b)_{b\in B})=\otimes_{b\in B}\text{Ad}(y_b)$.
\item $B\curvearrowright^{\sigma}\mathcal U(M)^B$ the shift action of $B$ (which preserves the subgroup $\mathcal U(M)^{(B)}<\mathcal U(M)^B$).
\item $B\curvearrowright^{\lambda} M^B$ the Bernoulli shift action given by $\lambda_b(x)=\otimes_{c\in B}x_{b^{-1}c}$, for $x=\otimes_{c\in B}x_c\in M^B$.
\end{enumerate}
\end{notation}

With this notation, we have:

\begin{lemma}\label{wrvN1}
 Let $(M,\tau)$ be a tracial von Neumann algebra and $B$ a group. Let $\xi:B\rightarrow\mathcal U(M)^B$ be a map such that $\xi_b\sigma_b(\xi_c)\xi_{bc}^{-1}\in\mathcal U(M)^{(B)}$, for every $b,c\in B$. Define  $\beta_b=\eta(\xi_b)\lambda_b\in\emph{Aut}(M^B)$ and $w_{b,c}=\gamma(\xi_b\sigma_b(\xi_c)\xi_{bc}^{-1})\in\mathcal U(M^B)$, for every $b,c\in B$. Then $(\beta,w)$ is a cocycle action of $B$ on $M^B$ and $M^B\rtimes_{\beta,w}B\in\mathcal W\mathcal R(M,B)$.
\end{lemma}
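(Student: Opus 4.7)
The plan is to verify the three cocycle-action axioms directly, and then to check the invariance $\beta_b(M^c) = M^{bc}$, which is the condition placing $M^B \rtimes_{\beta,w} B$ in $\mathcal W\mathcal R(M,B)$.

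First, I would record four compatibility identities relating $\sigma, \lambda, \eta, \gamma$, all immediate coordinate-wise checks, which form the backbone of every subsequent computation: (i) $\gamma$ restricted to $\mathcal{U}(M)^{(B)}$ is a group homomorphism, because different tensor factors commute; (ii) for $u \in \mathcal{U}(M)^{(B)}$, $\Ad(\gamma(u)) = \eta(u)$ on $M^B$, since conjugation by a finitely supported tensor acts coordinate-wise; (iii) $\lambda_b\,\eta(y)\,\lambda_b^{-1} = \eta(\sigma_b(y))$ for every $y \in \mathcal{U}(M)^B$; and (iv) $\lambda_b(\gamma(u)) = \gamma(\sigma_b(u))$ for $u \in \mathcal{U}(M)^{(B)}$. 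Together with $\lambda_b \lambda_c = \lambda_{bc}$, these reduce all needed manipulations to algebra in $\mathcal{U}(M)^B$.

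I would then verify the axioms in order, setting $v_{b,c} := \xi_b \sigma_b(\xi_c) \xi_{bc}^{-1} \in \mathcal{U}(M)^{(B)}$, so that $w_{b,c} = \gamma(v_{b,c})$. Axiom (1) is a one-line computation using (iii): $\beta_b \beta_c = \eta(\xi_b)\eta(\sigma_b(\xi_c)) \lambda_{bc} = \eta(v_{b,c}) \beta_{bc} = \Ad(w_{b,c}) \beta_{bc}$ by (ii). Axiom (2) reduces to the telescoping identity $v_{b,c} v_{bc,d} = \xi_b \sigma_b(v_{c,d}) \xi_b^{-1} \cdot v_{b,cd}$ in $\mathcal{U}(M)^{(B)}$, obtained from $\sigma_{bc} = \sigma_b \circ \sigma_c$ and the relation $\xi_c \sigma_c(\xi_d) = v_{c,d} \xi_{cd}$; applying $\gamma$ and invoking (i), (iv), and the definition of $\beta_b$ converts this into $w_{b,c} w_{bc,d} = \beta_b(w_{c,d}) w_{b,cd}$. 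Axiom (3) follows from the normalization $\xi_e = e$ (implicit, as in Lemma \ref{ex}), which gives $v_{b,e} = v_{e,b} = e$ and hence $w_{b,e} = w_{e,b} = 1$.

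For the final assertion, $\lambda_b$ permutes the tensor factors by $\lambda_b(M^c) = M^{bc}$, while $\eta(\xi_b)$ acts as a coordinate-wise inner automorphism and preserves each $M^d$; hence $\beta_b(M^c) = M^{bc}$, matching the definition of $\mathcal W\mathcal R(M,B)$. The one real bookkeeping subtlety — and the place where one might easily slip up — is making sure that products of $\xi_b$'s, which a priori live only in the unrestricted $\mathcal{U}(M)^B$, actually land in $\mathcal{U}(M)^{(B)}$ before $\gamma$ is applied; the hypothesis $\xi_b\sigma_b(\xi_c)\xi_{bc}^{-1}\in\mathcal{U}(M)^{(B)}$ is precisely what makes this automatic for the terms arising in the cocycle identity.
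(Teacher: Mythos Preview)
Your proposal is correct and follows essentially the same route as the paper: the identities you label (iii) and (iv) are exactly the paper's Lemma \ref{lambda}, your (ii) is the fact $\eta(x)=\Ad(\gamma(x))$ used there, and your verifications of axioms (1), (2) and of $\beta_b(M^c)=M^{bc}$ mirror the paper's computations (the paper computes $\beta_b(w_{c,d})$ directly rather than first telescoping in $\mathcal U(M)^{(B)}$, but this is only a difference in presentation). You are in fact slightly more careful than the paper in flagging that axiom (3) requires the normalization $\xi_e=e$, which the statement leaves implicit.
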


The proof of this lemma uses the following result, whose proof is straightforward.

\begin{lemma}\label{lambda}
$\lambda_b(\gamma(\zeta))=\gamma(\sigma_b(\zeta))$, $\eta(\sigma_b(\xi))=\lambda_b\eta(\xi)\lambda_b^{-1}$,  for all $b\in B$, $\zeta\in\mathcal U(M)^{(B)}$, $\xi\in\mathcal U(M)^B$.
\end{lemma}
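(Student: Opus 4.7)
The plan is to verify each of the two identities directly from the definitions given in Notation \ref{notatie}; the statement is essentially a bookkeeping lemma whose only content is that the shift on coordinates is compatible with the Bernoulli shift on tensor factors.

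For the first identity, I would take an arbitrary $\zeta=(\zeta_c)_{c\in B}\in\mathcal U(M)^{(B)}$ (so only finitely many coordinates differ from $1$, making the tensor product well-defined). Unfolding the definition of $\gamma$ gives $\gamma(\zeta)=\otimes_{c\in B}\zeta_c$. Applying $\lambda_b$ according to the rule $\lambda_b(\otimes_c x_c)=\otimes_c x_{b^{-1}c}$ yields $\lambda_b(\gamma(\zeta))=\otimes_{c\in B}\zeta_{b^{-1}c}$. On the other hand, $\sigma_b(\zeta)$ is by definition the tuple whose $c$-th coordinate is $\zeta_{b^{-1}c}$, so $\gamma(\sigma_b(\zeta))=\otimes_{c\in B}\zeta_{b^{-1}c}$, matching what we just computed. (One should note that $\sigma_b$ preserves the subgroup $\mathcal U(M)^{(B)}$, so applying $\gamma$ on the right-hand side makes sense.)

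For the second identity, both sides are $\tau$-preserving automorphisms of $M^B$, so it suffices to check they agree on elementary tensors $x=\otimes_{c\in B}x_c$ with $x_c=1$ for all but finitely many $c$, since such elements are weakly dense and the maps are weakly continuous. Taking $\xi=(\xi_c)_{c\in B}\in\mathcal U(M)^B$, I would compute the right-hand side step by step: first $\lambda_b^{-1}(x)=\otimes_{c\in B}x_{bc}$; then $\eta(\xi)(\lambda_b^{-1}(x))=\otimes_{c\in B}\xi_c\, x_{bc}\,\xi_c^*$; and finally applying $\lambda_b$ reindexes this to $\otimes_{c\in B}\xi_{b^{-1}c}\, x_{c}\,\xi_{b^{-1}c}^*$. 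Since $\sigma_b(\xi)_c=\xi_{b^{-1}c}$, the left-hand side $\eta(\sigma_b(\xi))(x)=\otimes_{c\in B}\operatorname{Ad}(\xi_{b^{-1}c})(x_c)$ matches term by term.

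There is no real obstacle here: the whole content is the change of variable $c\mapsto b^{-1}c$ applied in two slightly different guises (to the tuple $\zeta$ in the first identity, to the tensor factors in the second). The only mild subtlety worth flagging explicitly in the write-up is the density argument used to reduce the second identity to elementary tensors, and the observation that $\gamma$ is defined only on $\mathcal U(M)^{(B)}$ while $\eta$ makes sense on all of $\mathcal U(M)^B$, so one must check that $\sigma_b$ preserves $\mathcal U(M)^{(B)}$ before writing $\gamma(\sigma_b(\zeta))$ — which is immediate since $\sigma_b$ merely permutes the coordinates.
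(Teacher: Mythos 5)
Your verification is correct: both identities follow exactly as you compute, by the change of variable $c\mapsto b^{-1}c$ on elementary tensors together with the standard density/normality argument, and the observation that $\sigma_b$ preserves $\mathcal U(M)^{(B)}$ is indeed the only point needing mention. The paper omits the proof entirely (declaring it straightforward), and your write-up is precisely the routine computation it has in mind.
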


{\it Proof of Lemma \ref{wrvN1}.} Since $\beta_b(M^c)=M^{bc}$, for every $b,c\in B$, it suffices to check that $(\beta,w)$ is a cocycle action. 
Let $b,c,d\in B$.
Using Lemma \ref{lambda} and that $\eta(x)=\text{Ad}(\gamma(x))$, for every $x\in\mathcal U(M)^{(B)}$, we get that $$\beta_b\beta_c=\eta(\xi_b)(\lambda_b\eta(\xi_c)\lambda_b^{-1})\lambda_{bc}=\eta(\xi_b)\eta(\sigma_b(\xi_c))\lambda_{bc}=\text{Ad}(w_{b,c})\eta(\xi_{bc})\lambda_{bc}=\text{Ad}(w_{b,c})\beta_{bc}.$$

Moreover, as $\eta(y)(\gamma(x))=\gamma(yxy^{-1})$, for every $x\in\mathcal U(M)^{(B)}$ and $y\in\mathcal U(M)^B$, Lemma \ref{lambda} gives  $$\beta_b(w_{c,d})=\eta(\xi_b)(\lambda_b(\gamma(\xi_c\sigma_c(\xi_d)\xi_{cd}^{-1})))=\eta(\xi_b)(\gamma(\sigma_b(\xi_c\sigma_c(\xi_d)\xi_{cd}^{-1})))=\gamma(\xi_b\sigma_b(\xi_c)\sigma_{bc}(\xi_b)\sigma_b(\xi_{cd})^{-1}\xi_b^{-1}).$$  This fact implies that $w_{b,c}w_{bc,d}=\beta_b(w_{c,d})w_{b,cd}$ and hence $(\beta,w)$ is a cocycle action.
\hfill$\blacksquare$

We now arrive at the main result of this section.
 Let $A,B$ be groups and $G\in\mathcal W\mathcal R(A,B)$.  
 Given a tracial von Neumann algebra $(M,\tau)$, we show that any homomorphism $\pi:A\rightarrow\mathcal U(M)$ extends to a homomorphism $\widetilde\pi:G\rightarrow\mathcal U(N)$,  for some $N\in\mathcal W\mathcal R(M,B)$. 

To this end, using Lemma \ref{special}, we write $G=A^{(B)}\rtimes_{\alpha,v}B$, where $(\alpha,v)$ is a cocycle action of $B$ on $A^{(B)}$ given by $\alpha_b=\text{Ad}(\rho_b)\sigma_b$ and $v_{b,c}=\rho_b\sigma_b(\rho_c)\rho_{bc}^{-1}$, for some map $\rho:B\rightarrow A^B$.

\begin{proposition}\label{extend}
Let  $\pi:A\rightarrow\mathcal U(M)$ be a homomorphism, where $(M,\tau)$ is a tracial von Neumann algebra. Define $\xi:=\pi^B(\rho_b)\in\mathcal U(M)^B$, for every $b\in B$. 
Then $\xi_b\sigma_b(\xi_c)\xi_{bc}^{-1}\in\mathcal U(M)^{(B)}$, for every $b,c\in M$.
Define  $\beta_b=\eta(\xi_b)\lambda_b\in\emph{Aut}(M^B)$ and $w_{b,c}=\gamma(\xi_b\sigma_b(\xi_c)\xi_{bc}^{-1})\in\mathcal U(M^B)$, for every $b,c\in B$. 

Then $(\beta,w)$ is a cocycle action of $B$ on $M^B$,  $N:=M^B\rtimes_{\beta,w}B\in\mathcal W\mathcal R(M,B)$ and there is a homomorphism $\widetilde \pi:G\rightarrow\mathcal U(N)$ given by $\widetilde\pi(x)=\gamma(\pi^B(x))=\otimes_{b\in B}\pi(x_b)$ and $\widetilde\pi(e,c)=u_c$, for every $x=(x_b)_{b\in B}\in A^{(B)}$ and $c\in B$.
In particular, if $\pi(A)''=M$, then $\widetilde\pi(A^{(B)})''=M^B$ and $\widetilde\pi(G)''=N$.
\end{proposition}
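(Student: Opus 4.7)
The overall strategy is to split the proposition into its two parts. The first part (that $(\beta,w)$ is a cocycle action and $N\in\mathcal W\mathcal R(M,B)$) will follow by direct application of Lemma \ref{wrvN1}. The second part (the existence and surjectivity properties of $\widetilde\pi$) will follow from Lemma \ref{extension} applied to the homomorphism $\Pi:=\gamma\circ\pi^B\colon A^{(B)}\to\mathcal U(M^B)$, with the group-level cocycle action $(\alpha,v)$ on $A^{(B)}$ being the one supplied by Lemma \ref{special} and the von Neumann algebra-level cocycle action $(\beta,w)$ on $M^B$ being the one in the statement.

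First I would record the key identity $\xi_b\sigma_b(\xi_c)\xi_{bc}^{-1}=\pi^B(\rho_b\sigma_b(\rho_c)\rho_{bc}^{-1})=\pi^B(v_{b,c})$, which holds because $\pi^B$ is a homomorphism that commutes with the shift action $\sigma$. Since Lemma \ref{special} yields $v_{b,c}\in A^{(B)}$ and $\pi^B$ carries $A^{(B)}$ into $\mathcal U(M)^{(B)}$, this gives $\xi_b\sigma_b(\xi_c)\xi_{bc}^{-1}\in\mathcal U(M)^{(B)}$, and Lemma \ref{wrvN1} immediately delivers the first part of the statement.

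The main obstacle is verifying the two compatibility hypotheses of Lemma \ref{extension} for $\Pi$. The identity $w_{b,c}=\Pi(v_{b,c})$ is automatic from the observation above, since $\Pi(v_{b,c})=\gamma(\pi^B(v_{b,c}))=w_{b,c}$. The identity $\Pi\circ\alpha_b=\beta_b\circ\Pi$ is a short computation that exploits the two formulas of Lemma \ref{lambda} together with the obvious relation $\eta(y)(\gamma(z))=\gamma(yzy^{-1})$ valid for $y\in\mathcal U(M)^B$ and $z\in\mathcal U(M)^{(B)}$: these combine to give $\beta_b(\Pi(x))=\eta(\xi_b)(\gamma(\sigma_b(\pi^B(x))))=\gamma(\xi_b\sigma_b(\pi^B(x))\xi_b^{-1})$, which matches $\Pi(\alpha_b(x))=\gamma(\pi^B(\rho_b\sigma_b(x)\rho_b^{-1}))$ once one unpacks $\alpha_b=\text{Ad}(\rho_b)\sigma_b$. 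A small point of care is that $\rho_b$ lies only in $A^B$, not in $A^{(B)}$, but conjugating an element of $A^{(B)}$ by $\rho_b$ acts coordinatewise and preserves the support, so $\gamma$ is always applied to finitely supported elements.

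Once these compatibilities are established, Lemma \ref{extension} produces the homomorphism $\widetilde\pi\colon G\to\mathcal U(N)$ with the stated formulas $\widetilde\pi(x,c)=\Pi(x)u_c=\gamma(\pi^B(x))u_c$. For the in-particular assertion, assume $\pi(A)''=M$. For each $c\in B$, varying $a\in A$ in the $c$th coordinate (and placing $e$ elsewhere) embeds $\pi(A)$ into $\widetilde\pi(A^{(B)})$ as a subset of the tensor factor $M^c\subset M^B$, so the weak closure of $\widetilde\pi(A^{(B)})$ contains each $M^c$ and therefore equals $M^B=\overline{\otimes}_{b\in B}M$. Adjoining the unitaries $u_c=\widetilde\pi(e,c)$ for $c\in B$ then exhausts $N$, completing the plan.
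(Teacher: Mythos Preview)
Your proposal is correct and follows essentially the same approach as the paper: both reduce the first part to Lemma \ref{wrvN1} via the identity $\xi_b\sigma_b(\xi_c)\xi_{bc}^{-1}=\pi^B(v_{b,c})$, and both obtain $\widetilde\pi$ by verifying the two compatibility conditions of Lemma \ref{extension} for $\Pi=\gamma\circ\pi^B$. The only cosmetic difference is that you route the check of $\Pi\circ\alpha_b=\beta_b\circ\Pi$ through Lemma \ref{lambda} and the relation $\eta(y)(\gamma(z))=\gamma(yzy^{-1})$, whereas the paper writes out the same identity coordinatewise; your added remark about $\rho_b\in A^B$ and the ``in particular'' clause are welcome extra detail that the paper leaves implicit.
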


{\it Proof.} Since $\pi^B\circ\sigma_b=\sigma_b\circ\pi^B$, we get that $\xi_b\sigma_b(\xi_c)\xi_{bc}^{-1}\in\mathcal U(M)^{(B)}$, for every $b,c\in M$.
Lemma \ref{wrvN1} then implies that $(\beta,w)$ is a cocycle action of $B$ on $M^B$ and $N:=M^B\rtimes_{\beta,w}B\in\mathcal W\mathcal R(M,B)$.
Define $\widetilde\pi:A^{(B)}\rightarrow\mathcal U(M^B)$ by letting $\widetilde\pi(x)=\gamma(\pi^B(x))=\otimes_{b\in B}\pi(x_b)$, for every $x=(x_b)_{b\in B}\in A^{(B)}$.
Then $w_{b,c}=\gamma(\xi_b\sigma_b(\xi_c)\xi_{bc}^{-1})=(\gamma\circ\pi^B)(\rho_b\sigma_b(\rho_c)\rho_{bc}^{-1})=\widetilde\pi(v_{b,c})$, for every $b,c\in B$. Moreover, $(\widetilde\pi\circ\alpha_b)(x)=\otimes_{c\in C}\text{Ad}(\pi((\rho_b)_c))(\pi(x_{b^{-1}c}))=(\beta_b\circ\widetilde\pi)(x)$, for every $b\in B$ and $x=(x_c)_{c\in B}\in A^{(B)}$. Hence $\widetilde\pi\circ\alpha_b=\beta_b\circ\widetilde\pi$, for every $b\in B$. Altogether, Lemma \ref{extension} gives that $\widetilde\pi$ extends to a homomorphism $\widetilde\pi:G\rightarrow\mathcal U(N)$ such that $\widetilde\pi(e,b)=u_b$, for every $b\in B$. The last assertion is now immediate.
\hfill$\blacksquare$

\begin{notation}\label{not}
To emphasize the dependence on $\pi$, we hereafter write $(\beta^\pi,w^\pi)$ instead of $(\beta,w)$.
\end{notation}

\section{Rigidity for wreath-like product von Neumann algebras}
The following is the main technical result of this paper. 

\begin{theorem}\label{out}
Assume that $G\in\mathcal W\mathcal R(A,B)$ has property (T) and no nontrivial characters, where $A$ is a nontrivial countable group and $B$ is an icc hyperbolic group with $\emph{Out}(B)=\{e\}$.

For $i\in\{1,2\}$, let $\pi_i:A\rightarrow\mathcal U(M_i)$ be a homomorphism with $\pi_i(A)''=M_i$, where $M_i$ is a II$_1$ factor. 
Let $N_i=M_i^B\rtimes_{\beta^{\pi_i},w^{\pi_i}}B\in\mathcal W\mathcal R(M_i,B)$ be as defined in Proposition \ref{extend} and Notation \ref{not}.

Then $N_{i}$ is a II$_1$ factor with property (T) such that $\mathcal F(N_{i})=\{1\}$ and $\emph{Out}(N_{i})=\{e\}$.

Moreover, if $\theta:N_{1}\rightarrow pN_{2}p$ is a $*$-isomorphism, where  $p\in N_{2}$ is a projection, then $p=1$ and there is $u\in \mathcal U(N_{2})$ such that $\theta(\widetilde\pi_1(g))=u\widetilde\pi_2(g)u^*$, for every $g\in G$. Furthermore, there is a $*$-isomorphism $\theta_0:M_1\rightarrow M_2$ such that $\theta_0(\pi_1(g))=\pi_2(g)$, for all $g\in A$.

\end{theorem}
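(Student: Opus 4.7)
The plan is to split the proof into two essentially independent parts. First, I verify the easier assertions and reduce $\text{Out}(N_i)=\{e\}$ and $\mathcal F(N_i)=\{1\}$ to the ``moreover'' statement. Second, I establish the moreover statement using Popa's deformation/rigidity theory, following and extending the W*-superrigidity arguments of \cite{CIOS21}.

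Factoriality of $N_i$ is immediate from Lemma \ref{factor} since $B$ is icc, hence infinite. Property (T) of $N_i$ follows from $N_i=\widetilde\pi_i(G)''$ (Proposition \ref{extend}), property (T) of $G$, and the Connes--Jones criterion recalled in Subsection 2.2. For the triviality of $\text{Out}(N_i)$, any $\theta\in\text{Aut}(N_i)$, viewed as an isomorphism $N_i\to N_i$, by the moreover statement agrees with $\text{Ad}(u)$ on the generating set $\widetilde\pi_i(G)$ for some $u\in\mathcal U(N_i)$, so $\theta=\text{Ad}(u)$. For the triviality of $\mathcal F(N_i)$, any $t\in\mathcal F(N_i)$ yields an isomorphism $N_i\cong pN_ip$ for some projection $p$ of trace $1/t$; the moreover statement then forces $p=1$, so $t=1$.

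For the moreover statement, fix an isomorphism $\theta:N_1\to pN_2p$. The distinguished subalgebras are the group part $\text{L}(B)\subset N_i$ generated by the canonical unitaries $\{u_b\}_{b\in B}$, and the Bernoulli-type base $M_i^B\subset N_i$. The argument proceeds in four stages. Stage one locates $\theta(\text{L}(B))$ inside $N_2$ via $\text{L}(B)$: combining property (T) of $G$ (which passes to $N_1=\widetilde\pi_1(G)''$) with an $s$-malleable deformation of $N_i$ obtained from a Gaussian dilation of the tensor structure of $M_i^B$ yields, by a spectral gap argument, an intertwining $\theta(\text{L}(B))\prec_{N_2}\text{L}(B)$ in Popa's sense. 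Stage two upgrades this intertwining to a unitary conjugacy $v\theta(\text{L}(B))v^*=\text{L}(B)$, forcing $p=1$ along the way, using that $B$ is icc hyperbolic together with normalizer-control results for group von Neumann algebras of hyperbolic groups, in the style of Popa--Vaes and Chifan--Ioana--Osin--Sun. Stage three matches the bases: the irreducibility of $M_i^B\subset N_i$ from Lemma \ref{factor} together with Remark \ref{remark.relative.commutant} forces $v\theta(M_1^B)v^*=M_2^B$, and the assumption $\text{Out}(B)=\{e\}$ lets us absorb the induced automorphism of $B$ into $v$ and reduce to $v\theta(\widetilde\pi_1(g))v^*=\chi(g)\widetilde\pi_2(g)$ for a map $\chi:G\to\mathbb T$ which is forced to be a character of $G$. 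Stage four invokes the no-nontrivial-characters hypothesis to conclude $\chi\equiv 1$, yielding the desired $u\in\mathcal U(N_2)$ with $\theta(\widetilde\pi_1(g))=u\widetilde\pi_2(g)u^*$ for every $g\in G$. Finally, restricting to $(a,e)\in A^{(B)}\subset G$ gives $\theta(\pi_1(a))=u\pi_2(a)u^*\in\pi_2(A)''=M_2$, so $\theta_0(x):=u^*\theta(x)u$ defines a $*$-isomorphism $M_1\to M_2$ with $\theta_0\circ\pi_1=\pi_2$.

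The main obstacle is stage one: the malleable deformation must be engineered to handle the nontrivial cocycle $w^{\pi_i}$ (absent in the untwisted Bernoulli setting) and the fact that the base $M_i^B$ is a tensor product of arbitrary II$_1$ factors, not the abelian or group-algebra base treated in \cite{CIOS21}. Once this intertwining is obtained, stages two through four are adaptations of established rigidity techniques, with the no-characters hypothesis playing its standard role in stage four of removing the residual scalar obstruction that remains after matching both components.
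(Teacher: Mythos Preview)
Your reduction of the first paragraph to the moreover statement is correct and matches the paper. The overall architecture of your moreover argument, however, diverges from the paper's and contains a genuine gap.

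The paper does not intertwine the copies of $\text{L}(B)$ first. Instead it intertwines the \emph{bases}: using the comultiplication $\Delta_i:N_i\to N_i\overline\otimes\text{L}(B)$, $xu_b\mapsto xu_b\otimes u_b$, together with \cite{PV12} (as packaged in \cite[Lemma~8.5]{CIOS21}), it gets $\theta(M_1^B)\prec^s pM_2^Bp$ and conversely. Since $M_i^B\subset N_i$ are regular irreducible subfactors with quotient $B$ icc, \cite[Lemma~8.4]{IPP05} upgrades this to a unitary conjugacy $\theta(M_1^B)=pM_2^Bp$ after an inner perturbation. Only then is $\text{Out}(B)=\{e\}$ used to align the $u_b$'s. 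Your proposed route through an $s$-malleable deformation and $\theta(\text{L}(B))\prec\text{L}(B)$ is not the paper's, and your Stage~3 claim that $v\theta(M_1^B)v^*=M_2^B$ follows from $v\theta(\text{L}(B))v^*=\text{L}(B)$ via irreducibility and Remark~\ref{remark.relative.commutant} is not justified: knowing the image of $\text{L}(B)$ does not locate the base, since $M_i^B$ does not sit inside the commutant or normalizer of $\text{L}(B)$ in any useful way.

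The decisive gap is your Stages~3--4. Even granting $\theta(M_1^B)=pM_2^Bp$ and $\theta(u_b)\in M_2^Bu_b$, what one obtains is $\theta(\widetilde\pi_1(g))=\eta_g\widetilde\pi_2(g)$ for a \emph{generalized $1$-cocycle} $(\eta_g)_{g\in G}$ with values in $M_2^B$ and support projection $p$; there is no reason whatsoever for $\eta_g$ to be scalar. The heart of the proof is \cite[Theorem~7.10]{CIOS21}, a cocycle superrigidity theorem for actions built over the transitive $G$-set $B$ (one checks $\kappa_g(M_2^c)=M_2^{\varepsilon(g)c}$), which uses the no-characters hypothesis to force $p=1$ and $\eta_g=u\kappa_g(u)^*$ for some unitary $u\in M_2^B$. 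In particular $p=1$ is obtained here, not in your Stage~2, and the no-characters assumption does real work on an $M_2^B$-valued cocycle, not merely on a residual scalar $\chi:G\to\mathbb T$. Without invoking this cocycle rigidity result (or proving an equivalent), your argument does not close.
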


{\it Proof.}
The proof follows closely the strategy of the proof of \cite[Theorem 8.4]{CIOS21}. If $i\in\{1,2\}$, then Lemma \ref{factor} implies that  $N_{i}$ is a II$_1$ factor. Moreover, since $G$ has property (T) and the homomorphism $\widetilde\pi_i:G\rightarrow\mathcal U(N_{i})$ satisfies $\widetilde\pi_i(G)''=N_{i}$, we get that $N_{i}$ has property (T).
We only need to justify the moreover assertion since this implies that $\mathcal F(N_{i})=\{1\}$ and $\text{Out}(N_{i})=\{e\}$.

Let $\theta:N_{1}\rightarrow pN_{2}p$ be a $*$-isomorphism.
Since $M_2^B$ is a factor, we may assume that $p\in M_2^B$.
 We claim that
 \begin{equation}\label{inter}\text{$\theta(M_1^B)\prec_{pN_{2}p}^{\text{s}}pM_2^Bp$ and $pM_2^Bp\prec^{\text{s}}_{pN_{2}p}\theta(M_1^B)$.}\end{equation}

Since $N_{i}$ have property (T), $M_i$ is a factor and thus $M_i^B\subset N_{i}$ is an irreducible regular subfactor by Lemma \ref{factor}, for every $i\in\{1,2\}$, and $B$ is hyperbolic, \eqref{inter} follows from \cite[Lemma 8.5]{CIOS21}.  The proof of this lemma, which relies on \cite{PV12} (as used in \cite[Theorem 7.15]{CIOS21}), applies verbatim by using that for every $i\in\{1,2\}$  we have a $*$-homomorphism $\Delta_i:N_{i}\rightarrow N_{i}\overline{\otimes}\text{L}(B)$ given by $\Delta_i(xu_b)=xu_b\otimes u_b$, for $x\in M_i^B, b\in B$.

Since $M_1^B\subset N_{1}, pM_2^Bp\subset pN_{2}p$ are regular irreducible subfactors and the countable groups $\mathcal N_{N_{1}}(M_1^B)/\mathcal U(M_1^B)\cong\mathcal N_{pN_{2}p}(pM_2^Bp)/\mathcal U(pM_2^Bp)\cong B$ are icc,  \eqref{inter} together with \cite[Lemma 8.4]{IPP05} (see also \cite[Theorem 7.4]{CIOS21}) imply that there is $u\in\mathcal U(pN_2 p)$ such that $u\theta(M_1^B)u^*=pM_2^Bp$. After replacing $\theta$ by $\text{Ad}(u)\circ\theta$, we may thus assume that \begin{equation}\label{core}\theta(M_1^B)=pM_2^Bp.\end{equation}

Since $\mathcal N_{N_{1}}(M_1^B)/\mathcal U(M_1^B)\cong\mathcal N_{pN_{2}p}(pM_2^Bp)/\mathcal U(pM_2^Bp)\cong B$, there is an automorphism $\delta$ of $B$  such that $\theta(u_b)\in M_2^Bu_{\delta(b)}$, for every $b\in B$.
Since $\text{Out}(B)=\{e\}$, there is $c\in B$ such that $\delta(b)=cbc^{-1}$, for every $b\in B$. Thus, after replacing $\theta$ by $\text{Ad}(u_c^*)\circ\theta$, we get that \begin{equation}\label{discrete}\text{$\theta(u_b)\in M_2^Bu_b$, for every $b\in B$.}\end{equation}

Recall that  $G=A^{(B)}\rtimes_{\alpha,v}B$. By Proposition \ref{extend} we have that $\widetilde\pi_1(e,b)=\widetilde\pi_2(e,b)=u_b$, for every $b\in B$. 
If $a\in A^{(B)}$, then $\widetilde\pi_1(a,e)\in M_1^B$ and $\widetilde\pi_2(a,e)\in M_2^B$ and so $\theta(\widetilde\pi_1(a,e))\in M_2^B$ by \eqref{core}. By combining these facts with \eqref{discrete}, we deduce that \begin{equation}\label{cocycle} \text{$\theta(\widetilde\pi_1(g))\in M_2^B\widetilde\pi_2(g)$, for every $g\in G$.}\end{equation}

Consider the trace preserving action $G\curvearrowright^\kappa M_2^B$ given by $\kappa_g=\text{Ad}(\widetilde\pi_2(g))$ for $g\in G$. By \eqref{cocycle},  for any $g\in G$, there is $\eta_g\in M_2^B$ such that $\theta(\widetilde\pi_1(g))=\eta_g\widetilde\pi_2(g)$. Then $(\eta_g)_{g\in G}$ is a generalized $1$-cocycle for $\kappa$ with support projection $p$: $\eta_g\eta_g^*=p,\eta_g^*\eta_g=\lambda_g(p)$ and $\eta_{gh}=\eta_g\kappa_g(\eta_h)$, for every $g,h\in G$.

Next, Proposition \ref{extend} gives that $\widetilde\pi_2(x)=\otimes_{b\in B}\pi_2(x_b)$, thus $\kappa_{(x,e)}(M_2^c)=M_2^c$, for $x=(x_b)_{b\in B}\in A^{(B)}$ and $c\in B$.
If $b\in B$, then $\widetilde\pi_2(e,b)=u_b$, hence $\kappa_{(e,b)}=\text{Ad}(u_b)=\beta^{\pi_2}_b$ and thus $\kappa_{(e,b)}(M_2^c)=M_2^{bc}$, for every $b,c\in B$. Altogether, we get $\kappa_{(x,b)}(M_2^c)=M_2^{bc}$, for every $x\in A^{(B)}$ and $b,c\in B$. Equivalently, if $\varepsilon:G\rightarrow B$ is the quotient homomorphism, then $\kappa_g(M_2^c)=M_2^{\varepsilon(g)c}$, for every $g\in G$ and $c\in B$. Thus, $G\curvearrowright^\kappa M_2^B$
is built over the action $G\curvearrowright B$ given by $g\cdot b=\varepsilon(g)b$, for $g\in G, b\in B$, in the sense of \cite[Definition 2.5]{KV15}.

Since $G$ has no nontrivial characters, by applying \cite[Theorem 7.10]{CIOS21}, we get that $p=1$ and there is $u\in\mathcal U(M_2^B)$ such that $\eta_g=u\kappa_g(u)^*$. Thus, we derive that 
\begin{equation}\label{conjug}
\text{$\theta(\widetilde\pi_1(g))=u\widetilde\pi_2(g)u^*$, for every $g\in G$.}
\end{equation}

By Proposition \ref{extend} we get that $\widetilde\pi_i(g)=\pi_i(g)\in M_i^e$, for every $i\in\{1,2\}$ and $g\in (A)_e$.
Denote by $\theta_0$ the restriction of $\text{Ad}(u^*)\circ\theta$ to $\widetilde M_1^e$. Then \eqref{conjug} implies that $\theta_0(M_1^e)=M_2^e$ and identifying $M_i^e\equiv M_i$,  we have $\theta_0(\pi_1(g))=\pi_2(g)$, for every $g\in A$. This finishes the proof.
\hfill$\blacksquare$

\section{Embeddings into property (T) II$_1$ factors}
This section is devoted to the proofs of our main results.
To prove Theorems \ref{A} and \ref{B}, we will need the fact, observed in \cite[Remark 1.3]{GP98} and recorded below, that a separable McDuff II$_1$ factor can be generated by $3$ unitaries.
Although we will not use this, note that moreover any separable II$_1$ factor with property Gamma can be generated by $2$ unitaries by \cite[Theorem 6.2]{GP98}.

\begin{lemma}\emph{\cite{GP98}}\label{GP98}
Let $M$ be a separable II$_1$ factor and $R$ be the hyperfinite II$_1$ factor.
Let $(w_i)_{i\geq 1}\subset\mathcal U(M)$ be a sequence of unitaries which generate $M$.
 Let $u,v\in \mathcal U(R)$ be two unitaries which generate $R$  and $(p_i)_{i\geq 1}\subset R$ be nonzero projections such that $\sum_{i\geq 1}p_i=1$.

Then the unitaries $1\otimes u,1\otimes v,\sum_{i\geq 1}w_i\otimes p_i$ generate $M\overline{\otimes}R$. \end{lemma}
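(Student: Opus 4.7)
Let $N \subset M \overline{\otimes} R$ denote the von Neumann algebra generated by the three unitaries $1\otimes u$, $1\otimes v$, and $W := \sum_{i\geq 1} w_i \otimes p_i$. The plan is to show $N = M\overline{\otimes}R$ by first proving $1\otimes R \subset N$ and then $M \otimes 1 \subset N$.

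First, since $u$ and $v$ generate $R$, the inclusion $1\otimes R \subset N$ is immediate from the fact that $1\otimes u, 1\otimes v \in N$. In particular, $1 \otimes p_j \in N$ for every $j$. Note also that the assumption $\sum_i p_i = 1$ forces the $p_i$ to be pairwise orthogonal: for $i\ne j$, one has $p_i + p_j \leq \sum_k p_k = 1$, and the sum of two projections is bounded by $1$ only when they are orthogonal. Thus, multiplying $W$ on the right by $1\otimes p_j$ yields
\[
W(1\otimes p_j) \;=\; \sum_{i\geq 1} w_i \otimes p_i p_j \;=\; w_j \otimes p_j \;\in\; N.
\]

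The heart of the argument is to upgrade $w_j \otimes p_j \in N$ to $w_j \otimes 1 \in N$, using that $R$ is a II$_1$ factor and that $1\otimes R \subset N$. The key observation is that for every projection $q \in R$ with $\tau(q) \leq \tau(p_j)$, one can choose a subprojection $q' \leq p_j$ with $\tau(q') = \tau(q)$, and then a unitary $v \in R$ with $v q' v^* = q$ (using that any two projections of equal trace in a II$_1$ factor are unitarily conjugate). Since $1\otimes v \in N$ and $w_j \otimes q' = (w_j \otimes p_j)(1 \otimes q') \in N$, we deduce
\[
w_j \otimes q \;=\; (1\otimes v)(w_j \otimes q')(1\otimes v^*) \;\in\; N.
\]
For a general projection $q \in R$, decompose $q$ as a finite sum of mutually orthogonal projections each of trace $\leq \tau(p_j)$ (possible since $\tau(p_j) > 0$); summing the corresponding elements gives $w_j \otimes q \in N$ for every projection $q \in R$. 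By linearity and taking strong-operator limits of spectral decompositions, $w_j \otimes y \in N$ for every $y\in R$. Taking $y = 1$ yields $w_j \otimes 1 \in N$ for every $j$, and since the $w_j$ generate $M$, this gives $M \otimes 1 \subset N$.

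Combining $M\otimes 1 \subset N$ and $1\otimes R \subset N$ yields $N = M\overline{\otimes}R$, as desired. The only genuinely substantive step is the transport-and-compression argument in the middle paragraph; everything else is a direct use of the hypotheses and the factoriality of $R$.
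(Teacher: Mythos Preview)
The paper does not supply its own proof of this lemma; it merely records the statement and attributes it to \cite[Remark 1.3]{GP98}. Your argument is correct and is essentially the standard one: once $1\otimes R\subset N$ and $w_j\otimes p_j\in N$, the factoriality of $R$ lets you conjugate $p_j$ onto any projection of the same (or smaller) trace, and summing finitely many such pieces gives $w_j\otimes q\in N$ for every projection $q$, hence $w_j\otimes 1\in N$.

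One cosmetic point: in the transport step you introduce a conjugating unitary and call it $v$, which collides with the fixed generator $v\in\mathcal U(R)$ from the hypotheses; rename it (say $z$) to avoid ambiguity. Otherwise the write-up is clean.
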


\subsection{Proof of Theorem \ref{A}} 
Let $(M,\tau)$ be a separable tracial von Neumann algebra.  Since $M$ embeds into a separable II$_1$ factor (e.g., we can take $M*\text{L}(\mathbb F_2)$),  part (2) of Theorem \ref{A} follows from Theorem \ref{C} proven below. Thus, it remains to prove part (1) of Theorem \ref{A}.

To this end, let $\widetilde M:=(M*\text{L}(\mathbb F_2))\overline{\otimes}R$. Since $M*\text{L}(\mathbb F_2)$ is a separable II$_1$ factor, by Lemma \ref{GP98} we find a homomorphism
$\pi:\mathbb F_3\rightarrow\mathcal U(\widetilde M)$  such that $\pi(\mathbb F_3)''=\widetilde M$.

Let $H$ be an acylindrically hyperbolic group with property (T). By Theorem \ref{wreath}(2), $H$ admits a quotient $K\in\mathcal W\mathcal R(\mathbb F_3,C)$, for some group $C$. Let $r:H\rightarrow K$ be the onto homomorphism.
 By Proposition \ref{extend}, there is a cocycle action $(\beta,w)$ of $C$ on $\widetilde M^C$ such that $Q:=\widetilde M^C\rtimes_{\beta,w}C$ belongs to $\mathcal W\mathcal R(\widetilde M,C)$ and we have a homomorphism $\widetilde\pi:K\rightarrow\mathcal U(Q)$ with $\widetilde\pi(K)''=Q$.
By Lemma \ref{factor}, $Q$ is a II$_1$ factor. Moreover, the homomorphism $\widetilde\pi\circ r:H\rightarrow\mathcal U(Q)$ satisfies $(\widetilde\pi\circ r)(H)''=\widetilde\pi(K)''=Q$. Therefore, if $H$ has property (T), so does $Q$. Since $M$ embeds into $\widetilde M$ and thus into $Q$, this finishes the proof of part (1) of Theorem \ref{A}.\hfill$\blacksquare$

\subsection{Proof of Corollary \ref{B}}
Let $(M,\tau)$ be a tracial von Neumann algebra.
If $M$ is separable, then it embeds into a II$_1$ factor with property (T) by Theorem \ref{A}. 
Conversely, assume that $M$ embeds into a property (T) II$_1$ factor $N$. Since $N$ has property (T), it is finitely generated by \cite[Theorem 4.4.1]{Po86} (see also \cite[Lemma 2.8]{HJKE21}) and thus separable. Hence, $M$ is separable as well.
\hfill$\blacksquare$

\subsection{Proof of Theorem \ref{C}}
By Theorem \ref{wreath}(1), there is a property (T) group $G\in\mathcal W(\mathbb F_3,B)$ with no nontrivial characters, for some icc hyperbolic group $B$ with $\text{Out}(B)=\{e\}$. 
 Let $R$ be the hyperfinite II$_1$ factor, $u,v\in \mathcal U(R)$ be two generating unitaries  and $(p_i)_{i\geq 1}\subset R$ be nonzero projections such that $\sum_{i\geq 1}p_i=1$.  

Let $M$ be a separable II$_1$ factor and $s\in \mathbb T$. Let $(w_i^{(M,s)})_{i\geq 1}\subset\mathcal U(M)$ be a sequence of unitaries which generate $M$ such that $\tau(w_1^{(M,s)})=s$. Let $a,b,c$ be free generators of $\mathbb F_3$.
Define a homomorphism  $\pi_{(M,s)}:\mathbb F_3\rightarrow\mathcal U(M\overline{\otimes}R)$ by letting $$\text{$\pi_{(M,s)}(a)=1\otimes u,\pi_{(M,s)}(b)=1\otimes v$ \;\;and \;\;$\pi_{(M,s)}(c)=\sum_{i\geq 1}w_i^{(M,s)}\otimes p_i$.}$$ By Lemma \ref{GP98} we have that $\pi_{(M,s)}(\mathbb F_3)''=M\overline{\otimes}R$.

Applying Proposition \ref{extend} and Theorem \ref{out} to $A=\mathbb F_3$ and $\pi_{(M,s)}$ gives a II$_1$ factor $P_{(M,s)}:=N_{\pi_{(M,s)}}$ which has property (T) and satisfies $\text{Out}(P_{(M,s)})=\{e\}$ and $\mathcal F(P_{(M,s)})=\{1\}$. Since $P_{(M,s)}$ belongs to  $\mathcal W\mathcal R(M\overline{\otimes}R,B)$, it contains $M\overline{\otimes}R$ and hence $M$.

Assume that $P_{(M,s)}$ and $P_{(N,t)}$ are stably isomorphic, for some separable II$_1$ factors $M,N$ and $s,t\in\mathbb T$. Theorem \ref{out} implies that there is a $*$-isomorphism $\theta_0:M\overline{\otimes}R\rightarrow N\overline{\otimes}R$ such that $\theta_0(\pi_{(M,s)}(g))=\pi_{(N,t)}(g)$, for every $g\in\mathbb F_3$.
Thus, $\theta_0(1\otimes u)=1\otimes u$ and $\theta_0(1\otimes v)=1\otimes v$. Hence $\theta_0(1\otimes x)=1\otimes x$, for every $x\in R$, and there is a $*$-isomorphism $\theta_1:M\rightarrow N$ such that $\theta_0=\theta_1\otimes\text{Id}_R$.
From this we deduce that $\sum_{i\geq 1}\theta_1(w_i^{(M,s)})\otimes p_i=\theta_0(\sum_{i\geq 1} w_i^{(M,s)}\otimes p_i)=\sum_{i\geq 1}w_i^{(N,t)}\otimes p_i$ and therefore $\theta_1(w_i^{(M,s)})=w_i^{(N,t)}$, for every $i\geq 1$. In particular $s=\tau(w_1^{(M,s)})=\tau(w_1^{(N,t)})=t$. This implies the conclusion of Theorem \ref{C}.
 \hfill$\blacksquare$

The final goal of this section is to prove Theorem \ref{D}. To this end, we will need the following analogue of Lemma \ref{GP98} for equivalence relations:

\begin{lemma} \label{genR}
Let $\mathcal R$ be an ergodic countable p.m.p. equivalence relation on a probability space $(X,\mu)$.  Let $\mathcal T$ be an ergodic hyperfinite p.m.p. equivalence relation on a non-atomic probability space $(Y,\nu)$. 

\begin{enumerate}
\item There exist $\varphi,\psi\in [\mathcal T]$ such that $\{u_\varphi,u_\psi\}''=\emph{L}(\mathcal T)$. 
\item There exist $\alpha,\beta,\gamma\in [\mathcal R\times\mathcal T]$ such that $\{u_\alpha,u_\beta,u_\gamma\}''=\emph{L}(\mathcal R\times\mathcal T)$.
\end{enumerate}
\end{lemma}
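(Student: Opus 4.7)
The plan is: prove~(1) via the density fact $\{u_\theta:\theta\in[\mathcal S]\}''=\text{L}(\mathcal S)$ combined with topological $2$-generation of the full group due to Kittrell--Tsankov, and deduce~(2) from~(1) by applying Lemma~\ref{GP98} and realizing its three unitaries as $u_{(\cdot)}$'s of concrete elements of $[\mathcal R\times\mathcal T]$.

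The key auxiliary step is: for every ergodic countable p.m.p.\ equivalence relation $\mathcal S$ on a non-atomic probability space $(Z,\zeta)$, $\{u_\theta:\theta\in[\mathcal S]\}''=\text{L}(\mathcal S)$. By the Feldman--Moore construction it suffices to show $\text{L}^\infty(Z)\subset\{u_\theta:\theta\in[\mathcal S]\}''$. Fix a Borel $A\subset Z$ with $0<\zeta(A)<1$. The restriction $\mathcal S|_{A^c}$ is ergodic on the non-atomic space $A^c$, hence contains an ergodic aperiodic transformation $T\in[\mathcal S|_{A^c}]$; extending by the identity on $A$ yields $\tilde T\in[\mathcal S]$. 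In the block decomposition $1=1_A+1_{A^c}$ one has $u_{\tilde T}=1_A+u_T$ where $u_T=1_{A^c}u_T 1_{A^c}$, so $1_A\cdot u_T=u_T\cdot 1_A=0$ and therefore $u_{\tilde T}^n=1_A+u_T^n$. Aperiodicity of $T$ makes $u_T$ a Haar unitary in the corner $1_{A^c}\text{L}(\mathcal S)1_{A^c}$, so $u_T^n\to 0$ in the weak operator topology; hence $u_{\tilde T}^n\to 1_A$ weakly, placing $1_A\in\{u_{\tilde T}\}''\subset\{u_\theta:\theta\in[\mathcal S]\}''$. Varying $A$ gives the claim.

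Given this, for part~(1) pick $\varphi,\psi\in[\mathcal T]$ topologically generating $[\mathcal T]$ in the uniform metric (Kittrell--Tsankov). The map $\theta\mapsto u_\theta$ is continuous from the uniform topology to the $\|\cdot\|_2$-topology, since $\|u_\theta-u_{\theta'}\|_2^2=2(1-\nu(\{\theta=\theta'\}))$. So every $u_\theta$ for $\theta\in[\mathcal T]$ lies in the weak operator closure of $\{u_\rho:\rho\in\langle\varphi,\psi\rangle\}\subset\{u_\varphi,u_\psi\}''$, and combined with the auxiliary fact this yields $\{u_\varphi,u_\psi\}''=\text{L}(\mathcal T)$.

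For part~(2), the same argument applied to $\mathcal R$ (also ergodic) produces $\theta_1,\theta_2\in[\mathcal R]$ with $\{u_{\theta_1},u_{\theta_2}\}''=\text{L}(\mathcal R)$. Fix a Borel partition $\{B_i\}_{i\ge 1}$ of $Y$ into non-null sets and set $w_1:=u_{\theta_1}$, $w_2:=u_{\theta_2}$, $w_i:=1$ for $i\ge 3$, and $p_i:=1_{B_i}\in\text{L}^\infty(Y)\subset\text{L}(\mathcal T)$. Lemma~\ref{GP98} with $M:=\text{L}(\mathcal R)$, $R:=\text{L}(\mathcal T)$, $u:=u_\varphi$, $v:=u_\psi$ then shows that $1\otimes u_\varphi$, $1\otimes u_\psi$, and $\sum_{i\ge 1}w_i\otimes p_i$ generate $\text{L}(\mathcal R)\overline{\otimes}\text{L}(\mathcal T)=\text{L}(\mathcal R\times\mathcal T)$. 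Defining
\[
\alpha(x,y):=(x,\varphi(y)),\quad \beta(x,y):=(x,\psi(y)),\quad \gamma(x,y):=(\theta_{i(y)}(x),y),
\]
where $i(y)$ is the unique index with $y\in B_{i(y)}$, a direct check of the adjoint actions on $\text{L}^\infty(X\times Y)$ gives $u_\alpha=1\otimes u_\varphi$, $u_\beta=1\otimes u_\psi$, and $u_\gamma=\sum_i u_{\theta_i}\otimes 1_{B_i}$, completing the proof. The hardest step to pin down is the auxiliary density lemma, since any finite linear combination of $u_\theta$'s has a ``constant total row-sum'' in the Fourier expansion over a generating single transformation that prevents direct access to a characteristic function $1_A$; the weak-limit trick $u_{\tilde T}^n\to 1_A$ using Haar unitarity in a corner is what bypasses this obstruction.
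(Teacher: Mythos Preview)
Your argument for part~(1) is correct, but it routes through Kittrell--Tsankov's theorem that the full group of a hyperfinite ergodic relation is topologically $2$-generated. The paper instead gives a fully self-contained construction: it realizes $\mathcal T$ as the orbit relation of the dyadic odometer $\varphi$ on $\{0,1\}^{\mathbb N}$, defines $\psi$ by gluing the powers $\varphi^{2^n}$ along an explicit partition $\{A_n\}_{n\geq 1}$, and proves by induction on $N$ that each projection $1_{A_N}$ lies in $\{u_\varphi,u_\psi\}''$ (using ergodicity of $\varphi^{2^N}$ on $\cup_{n>N}A_n$ to average away the higher terms). Your route is conceptually cleaner and shorter; the paper's is elementary and avoids the external dependence.

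Part~(2), however, has a genuine gap. You write that ``the same argument applied to $\mathcal R$ (also ergodic) produces $\theta_1,\theta_2\in[\mathcal R]$ with $\{u_{\theta_1},u_{\theta_2}\}''=\text{L}(\mathcal R)$.'' But the Kittrell--Tsankov input you used for~(1) gives topological $2$-generation of $[\mathcal S]$ only for hyperfinite $\mathcal S$ (more generally, when the cost is at most~$1$); for arbitrary ergodic $\mathcal R$ the topological rank of $[\mathcal R]$ grows with the cost and can be any integer $\geq 2$ or infinite. So two $\theta_i$'s do not suffice in general.

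The repair is immediate within your own framework: your auxiliary density lemma together with separability of $[\mathcal R]$ in the uniform metric yields a \emph{countable} sequence $(\theta_i)_{i\geq 1}\subset[\mathcal R]$ with $\{u_{\theta_i}\}_{i\geq 1}''=\text{L}(\mathcal R)$, and Lemma~\ref{GP98} only needs a countable generating sequence $(w_i)$. Taking $w_i=u_{\theta_i}$ for all $i$ (rather than padding with identities after two terms), your definition of $\gamma$ already gives $u_\gamma=\sum_i u_{\theta_i}\otimes 1_{B_i}$ and the rest goes through unchanged. The paper handles this step in essentially the same spirit: it passes to an ergodic hyperfinite subrelation $\mathcal R_0\subset\mathcal R$, applies~(1) there to obtain $\zeta_1,\zeta_2$ with $\text{L}^\infty(X)\subset\{u_{\zeta_1},u_{\zeta_2}\}''$, and then appends a countable $\|\cdot\|_2$-dense family $\{u_{\zeta_i}\}_{i\geq 3}$ to generate all of $\text{L}(\mathcal R)$.
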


{\it Proof.} (1)
Let $(Y,\nu)=\prod_{n\geq 1}\big(\{0,1\},\frac{1}{2}(\delta_0+\delta_1)\big)$ and identify $\mathcal T$ with the orbit equivalence relation induced by the dyadic odometer $\varphi:Y\rightarrow Y$ given by addition by $\underline{1}=(1,0,\cdots,0,\cdots)$:
$$\varphi(1,\cdots,1,0,x_{k+1},x_{k+2},\cdots)=(0,\cdots, 0,1,x_{k+1},x_{k+2},\cdots).$$
Let $A_1=\{(x_k)_{k\geq 1}\in Y\mid x_1=1\}$ and $A_n=\{(x_k)_{k\geq 1}\in Y\mid x_1=\cdots=x_{n-1}=0,x_n=1\}$, for $n\geq 2$.
Then $\{A_n\}_{n\geq 1}$ is a measurable partition of $Y$ and $\varphi^{2^n}(A_n)=A_n$, for every $n\geq 1$. We define $\psi\in [\mathcal T]$ by letting $\psi_{|A_n}={\varphi^{2^n}}_{|A_n}$, for every $n\geq 1$. Denote $u=u_{\varphi},v=u_{\psi}\in\text{L}(\mathcal T)$, $p_0=1$ and $p_n=1_{A_n}\in\text{L}^{\infty}(Y)$, for every $n\geq 1$.
Also, denote $M=\{u,v\}''$. 

Since $\text{L}(\mathcal T)$ is generated by $u$ and $\{p_n\}_{n\geq 1}$, in order to prove that $M=\text{L}(\mathcal T)$, it suffices to argue that $p_n\in M$, for every $n\geq 0$. We will  prove this assertion by induction on $n$. 

Assume that $p_0,\cdots,p_{N-1}\in M$, for $N\geq 1$.
As $u\in M$ and $v=\sum_{n\geq 1}p_nu^{2^n}\in M$, we get that \begin{equation}\label{w}w:=\sum_{n\geq N}p_nu^{2^n}\in M.\end{equation} Denote $q_N:=\sum_{n\geq N+1}p_n=1_{\cup_{n\geq N+1}A_n}$.
We note that $\varphi^{2^N}$ leaves invariant and acts ergodically on the set $\cup_{n\geq N+1}A_n=\{(x_k)_{k\geq 1}\in Y\mid x_1=\cdots=x_N=0\}$.  
Thus, we have that \begin{equation}\label{ergodic}\text{$\lim\limits_{K\rightarrow\infty}\frac{1}{K}\sum_{k=1}^K\text{Ad}(u^{k2^N})(p_n)=\frac{\tau(p_n)}{\tau(q_N)}q_N=2^{N-n}q_N$ \;\; in $\|\cdot\|_2$, \;\; for every $n\geq N+1$.}
\end{equation}
Since $\frac{1}{K}\sum_{k=1}^K\text{Ad}(u^{k2^N})(w)\in M$, for every $K\geq 1$, and $\text{Ad}(u^{2^N})(p_N)=p_N$,  combining \eqref{w} and \eqref{ergodic} gives that $p_Nu^{2^N}+\sum_{n\geq N+1}2^{N-n}q_Nu^{2^n}\in M.$
Since $q_N=1-\sum_{n=1}^Np_n$ and $p_1,\cdots,p_{N-1}\in M$, we further derive that 
\begin{equation}\label{p_N}
p_N\big(u^{2^N}-\sum_{n\geq N+1}2^{N-n}u^{2^n}\big)\in M.
\end{equation}
Finally, note that $u^{2^N}-\sum_{n\geq N+1}2^{N-n}u^{2^n}\in M$ has right support equal to $1$. This is because there is a $*$-isomorphism $\{u\}''\cong\text{L}^{\infty}(\mathbb T)$ which sends $u$ to the identity function $z$ and the equation $z^{2^N}-\sum_{n\geq N+1}2^{N-n}z^{2^n}=0$, for $z\in\mathbb T$, forces $z^{2^N}=1$ and thus only has finitely many solutions.
In combination with \eqref{p_N}, this implies that $p_N\in M$, which finishes the proof of (1).

(2) By part (1), there are $\varphi,\psi\in [\mathcal T]$ with $\{u_\varphi,u_\psi\}''=\text{L}(\mathcal T)$. 
Let $\alpha=\text{Id}_X\times\varphi,\beta=\text{Id}_X\times\psi\in [\mathcal R\times\mathcal T]$.

Let $\mathcal R_0\subset\mathcal R$ be a hyperfinite ergodic subequivalence relation. By (1), there are $\zeta_1,\zeta_2\in[\mathcal R_0]\subset[\mathcal R]$ with $\{u_{\zeta_1},u_{\zeta_2}\}''=\text{L}(\mathcal R_0)$ and so $\text{L}^{\infty}(X)\subset\{u_{\zeta_1},u_{\zeta_2}\}''$. Let $\{\zeta_i\}_{i\geq 3}\subset [\mathcal R]$ be a sequence such that  $\{u_{\zeta_i}\}_{i\geq 3}\subset \{u_\zeta\}_{\zeta\in [\mathcal R]}$ is $\|\cdot\|_2$-dense. Then $\big(\text{L}^{\infty}(X)\cup\{u_\zeta\}_{\zeta\in [\mathcal R]}\big)''\subset\{u_{\zeta_i}\}_{i\geq 1}''$, thus $\{u_{\zeta_i}\}_{i\geq 1}''=\text{L}(\mathcal R)$.

Next, let $\{Y_i\}_{i\geq 1}$ be a measurable partition of $Y$ consisting of sets of positive measure. Define $\gamma\in [\mathcal R\times\mathcal T]$ by letting $\gamma_{|(X\times Y_i)}=\zeta_i\times\text{Id}_{Y_i}$, for every $i\geq 1$. Then $u_{\gamma}=\sum_{i\geq 1}u_{\zeta_i}\otimes\text{1}_{Y_i}$. 

Finally, let $N=\{u_\alpha,u_\beta,u_\gamma\}''$. 
Since $\{u_\alpha,u_\beta\}''=\mathbb C1\otimes\text{L}(\mathcal T)$, we get that $\mathbb C1\otimes\text{L}(\mathcal T)\subset N$. This further implies that $\{u_{\zeta_i}\otimes 1\}_{i\geq 1}\subset N$. Since $\{u_{\zeta_i}\}_{i\geq 1}''=\text{L}(\mathcal R)$, we also get that $\text{L}(\mathcal R)\otimes\mathbb C1\subset N$. Altogether, it follows that $N=\text{L}(\mathcal R\times\mathcal T)$, which proves (2).
\hfill$\blacksquare$

\subsection*{Proof of Theorem \ref{D}}
Let $\mathcal R$ be a countable p.m.p. equivalence relation on a probability space $(X,\mu)$. Let $\mathcal P$ be a countable ergodic p.m.p. equivalence relation on $(X,\mu)$ which contains $\mathcal R$. Let $\mathcal T$ be an ergodic hyperfinite p.m.p. equivalence relation on a probability space $(Z,\nu)$.
Since $\mathcal R\times\Delta_Z\subset\mathcal P\times\mathcal T$, it suffices to prove the conclusion of Theorem \ref{D} for $\mathcal P\times\mathcal T$ instead of $\mathcal R$. By Lemma \ref{genR}, after replacing
$\mathcal R$ by $\mathcal P\times\mathcal T$, we may assume that there is a homomorphism $\pi:A\rightarrow [\mathcal R]$ such that $\{u_{\pi(g)}\}_{g\in A}''=\text{L}(\mathcal R)$, where $A=\mathbb F_3$.

 In the rest of the proof, we will construct an equivalence relation $\mathcal S$ on $(X^B,\mu^B)$ which satisfies the conclusion.
First, by Theorem \ref{wreath}(1), there is a property (T) group $G\in\mathcal W(A,B)$ with no nontrivial characters, for some icc hyperbolic group $B$ with $\text{Out}(B)=\{e\}$.  Using Lemma \ref{special}, we write $G=A^{(B)}\rtimes_{\alpha,v}B$, where $(\alpha,v)$ is a cocycle action of $B$ on $A^{(B)}$ given by $\alpha_b=\text{Ad}(\rho_b)\sigma_b$ and $v_{b,c}=\rho_b\sigma_b(\rho_c)\rho_{bc}^{-1}$, for some map $\rho:B\rightarrow A^B$, where $\sigma:B\rightarrow\text{Aut}(A^B)$ denotes the shift action.

Define the equivalence relation $\mathcal R^{(B)}$ on $(X^B,\mu^B)$ by  $((x_b)_{b\in B},(y_b)_{b\in B})\in\mathcal R^{(B)}$ if $(x_b,y_b)\in\mathcal R$, for every $b\in B$, and the set $\{b\in B\mid x_b\not=y_b\}$ is finite.
Also, define $\kappa:[\mathcal R]^B\rightarrow\text{Aut}(\mathcal R^{(B)})$ by letting $\kappa(\varphi)(x)=(\varphi_b(x_b))_{b\in B}$, for every $\varphi=(\varphi_b)_{b\in B}\in [\mathcal R]^B$ and $x=(x_b)_{b\in B}\in X^B$.
Note that $\kappa([\mathcal R]^{(B)})\subset [\mathcal R^{(B)}]$. 
We denote by $B\curvearrowright^\lambda(X^B,\mu^B)$ the Bernoulli shift action and note that $(\lambda_b)_{b\in B}\subset\text{Aut}(\mathcal R^{(B)})$.
Define $\tau:B\rightarrow\text{Aut}(\mathcal R^{(B)})$ and $\omega:B\times B\rightarrow [\mathcal R^{(B)}]$ by letting
$$\text{$\tau_b=\kappa(\pi^B(\rho_b))\lambda_b$ and $\omega_{b,c}=\kappa(\pi^B(v_{b,c}))$, for every $b,c\in B$.}$$
Since $\lambda_b\kappa(\pi^B(x))\lambda_b^{-1}=\kappa(\pi^B(\sigma_b(x)))$, for every $b\in B$ and $x\in A^B$, it follows that $(\tau,\omega)$ is a cocycle action of $B$ on $\mathcal R^{(B)}$. 
If $b\in B\setminus\{e\}$ and we let $\varphi=\pi^B(\rho_b)\in [\mathcal R]^B$, then $\tau_b(x)=(\varphi_c(x_{b^{-1}c}))_{c\in B}$, for every $x=(x_c)_{c\in B}X^B$.
Thus, if $(\tau_b(x),x)\in\mathcal R^{(B)}$, then the set $\{c\in B\mid\varphi_c(x_{b^{-1}c})\not=x_c\}$ is finite.
Since $B$ is infinite, we get that $\mu^B(\{x\in X^B\mid (\tau_b(x),x)\in\mathcal R^{(B)}\})=0$, for every $b\in B\setminus\{e\}$.

Let $\mathcal S$ be the smallest equivalence relation on $(X^B,\mu^B)$ which contains $\mathcal R^{(B)}$ and the graph of $\tau_b$, for every $b\in B$.
Let $u:[\mathcal R^{(B)}]\rightarrow \mathcal U(\text{L}(\mathcal R^{(B)}))$ and $i:\text{Aut}(\mathcal R^{(B)})\rightarrow\text{Aut}(\text{L}(\mathcal R^{(B)}))$ be the homomorphisms defined in Subsection \ref{ER}.  By Lemma \ref{cocycleER}, $(\beta,w):=(i\circ\tau,u\circ\omega)$ is a cocycle action of $B$ on $\text{L}(\mathcal R^{(B)})$ and $\text{L}(S)\cong\text{L}(\mathcal R^{(B)})\rtimes_{\beta,w}B$.

Let $\xi_b=(u\circ\pi)^B(\rho_b)\in\mathcal U(\text{L}(\mathcal R))^B$, for $b\in B$. Then $\xi_b\sigma_b(\xi_c)\xi_{bc}^{-1}\in\mathcal U(\text{L}(\mathcal R))^{(B)}$,
  $\beta_b=\eta(\xi_b)\lambda_b$ and $w_{b,c}=\gamma(\xi_b\sigma_b(\xi_c)\xi_{bc}^{-1})$, for every $b,c\in B$, where $\eta,\gamma$ are defined as in Notation \ref{notatie} for $M=\text{L}(\mathcal R)$.
Since $(u\circ\pi)(A)''=\text{L}(\mathcal R)$, 
 Proposition \ref{extend} and Theorem \ref{out} imply that $\text{L}(\mathcal S)$ has property (T), $\text{Out}(\text{L}(\mathcal S))=\{e\}$ and $\mathcal F(\text{L}(\mathcal S))=\{1\}$.
Since $\mathcal S$ contains $\mathcal R^{(B)}$, it also contains $\mathcal R\times\Delta_{X^{B\setminus\{e\}}}$, where we identify $X^e$ with $X$. This finishes the proof.
\hfill$\blacksquare$

\section{Structural properties of infinitely generic II$_1$ factors}\label{structure}

In this section, we apply Theorem \ref{A} to obtain two new structural properties for the class of infinitely generic II$_1$ factors introduced in  \cite[Propositions 5.7, 5.10 and 5.14]{FGHS16} (see also  \cite[Fact 6.3.14]{AGKE20}). Throughout this section, as in \cite{Go20,AGKE20}, we assume the Continuum Hypothesis.

\begin{proposition}\emph{\cite{FGHS16}}\label{infgen}
There is a class of separable II$_1$ factors $\mathcal G$ satisfying the following:
\begin{enumerate}
    \item $\mathcal G$ is embedding universal: every separable II$_1$ factor embeds into an element of $\mathcal G$,
    \item  any embedding $\pi: Q_1 \hookrightarrow Q_2$, for some $Q_1,Q_2\in  \mathcal G$ is elementary, i.e., it extends to an isomorphism $Q_1^\omega\cong Q_2^\omega$, and
    \item $\mathcal G$ is the maximum class with properties (1) and (2).
\end{enumerate}
 The elements of $\mathcal G$ are called \emph{infinitely generic II$_1$ factors}. 
\end{proposition}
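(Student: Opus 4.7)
The plan is to adapt Robinson's theory of infinite forcing to the continuous model theory of tracial von Neumann algebras, following the framework of \cite{FGHS16}. Throughout, all structures are separable II$_1$ factors, viewed as models in the continuous language of tracial von Neumann algebras; the Continuum Hypothesis is invoked so that ultrapowers $M^\omega$ are countably saturated of density character $\aleph_1$.

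First, I would define an infinite forcing relation $M \Vdash \varphi(\bar a)$ by induction on the complexity of positive bounded formulas $\varphi$ with parameters $\bar a$ from a separable II$_1$ factor $M$: atomic and quantifier-free clauses are the obvious ones, while the key clause $M \Vdash \inf_x \varphi(x,\bar a) = 0$ is formulated so that every separable II$_1$ factor extension $N \supseteq M$ admits a further separable extension containing approximate witnesses. Then I would define $\mathcal G$ to be the class of separable II$_1$ factors $M$ in which forcing coincides with truth, i.e., $M \Vdash \varphi(\bar a) = 0$ if and only if $\varphi(\bar a)^M = 0$ for every $\bar a$ and $\varphi$. Such $M$ are exactly the infinitely generic II$_1$ factors.

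For the embedding universality property (1), starting from any separable II$_1$ factor $M_0$, I would construct a chain $(M_\alpha)_{\alpha < \omega_1}$ of separable II$_1$ factors by transfinite recursion where, at each stage, one enumerates and realizes all existential formulas that are forced over $M_\alpha$, by passing to a suitable separable extension. A standard bookkeeping argument then produces a factor in $\mathcal G$ containing $M_0$; CH keeps the density character at $\aleph_1$, and freely amalgamating with $\text{L}(\mathbb F_2)$ at each stage ensures the II$_1$ factor property is preserved. For (2), given $\pi: Q_1 \hookrightarrow Q_2$ with $Q_1, Q_2 \in \mathcal G$, a back-and-forth argument between $Q_1^\omega$ and $Q_2^\omega$ produces a $*$-isomorphism extending $\pi$: the two ultrapowers are countably saturated of density character $\aleph_1$ by CH, and membership in $\mathcal G$ guarantees that every type realized in some extension is already forced, hence realized in the saturated ultrapower, making the back-and-forth steps executable. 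For (3), if $\mathcal G'$ also satisfies (1) and (2) and $M \in \mathcal G'$, I would embed $M$ into some $N \in \mathcal G$ by (1) applied to $\mathcal G$, and use (2) applied to $\mathcal G'$ to promote this to an elementary embedding; comparing forcing with truth then shows $M \in \mathcal G$, so $\mathcal G' \subseteq \mathcal G$.

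The main obstacle is the careful setup of the continuous-logic forcing relation together with the chain construction in (1): one has to verify that at each of the $\omega_1$-many stages one can realize all forced existentials while remaining both separable and a II$_1$ factor, and that the resulting limit is truly infinitely generic. Once this machinery is in place, (2) and (3) follow from the usual Robinson-forcing template adapted to the continuous setting, as worked out in \cite{FGHS16}.
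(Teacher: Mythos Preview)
The paper does not prove this proposition; it is quoted from \cite{FGHS16} (specifically Propositions 5.7, 5.10 and 5.14 there) and used as a black box. Your sketch is in the spirit of the argument in \cite{FGHS16}, which indeed builds $\mathcal G$ via Robinson's infinite forcing adapted to continuous model theory.

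That said, there are two slips in your outline. For (1), the chain must have countable length, not $\omega_1$: the members of $\mathcal G$ are required to be \emph{separable} II$_1$ factors, and a union over $\omega_1$ stages will typically have density character $\aleph_1$, not $\aleph_0$. The construction in \cite{FGHS16} uses a countable chain, realizing at each stage the countably many forced existential conditions over a countable dense set, so that the inductive limit remains separable; CH plays no role in this part. For (3), you cannot apply property (2) of $\mathcal G'$ directly to the embedding $M\hookrightarrow N$, since $N\in\mathcal G$ need not lie in $\mathcal G'$. The standard fix is to alternate: embed $N$ further into some $M'\in\mathcal G'$ using (1) for $\mathcal G'$; then the composite $M\hookrightarrow M'$ is elementary by (2) for $\mathcal G'$, hence so is the initial segment $M\hookrightarrow N$, and one concludes $M\in\mathcal G$ from the characterization of $\mathcal G$ as the class where forcing coincides with truth.
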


\subsection{Embeddings into ultraproducts}
A well-known question of Popa asks if every $R^\omega$-embeddable separable II$_1$ factor $M$ admits an embedding $\pi: M \hookrightarrow R^\omega$ such that the relative commutant $\pi(M)'\cap R^\omega$ is a factor. While this question has been answered positively in some instances (e.g., when $M = R$ \cite{DL69}, $M={\rm L}({\rm SL}_3(\mathbb Z))$ \cite{Po13}, and $M$ is any II$_1$ factor elementarily equivalent to $R$ \cite{AGKE20}), it remains wide open in general. In particular, it is open for property (T) II$_1$ factors.


A variation of Popa's question, where $R$ is replaced by  infinitely generic II$_1$ factors, was considered recently by Goldbring in \cite{Go20}.
 Specifically,  \cite[Theorem 2.18]{Go20} shows that if $Q\in \mathcal G$, then any  II$_1$ factor $M$ with property (T) admits an embedding in $Q^\omega$ with factorial relative commutant.  

By combining Theorem \ref{A} with properties of wreath-like product von Neumann algebras we extend \cite[Theorem 2.18]{Go20}  to all full separable II$_1$ factors $M$. More generally, we have:

\begin{theorem}\label{Go20} Let $Q$ be any infinitely generic II$_1$ factor. Then any separable II$_1$ factor $P_0$ such that $P_0'\cap P_0^\omega$ is a (possibly trivial) factor admits an embedding $\pi: P_0 \hookrightarrow Q^\omega$ such that  $\pi(P_0)'\cap Q^\omega$ is a factor.
\end{theorem}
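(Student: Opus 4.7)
The plan is to reduce Theorem \ref{Go20} to \cite[Theorem 2.18]{Go20}---which handles the case of property (T) factors $P_0$---by invoking the embedding universality of property (T) II$_1$ factors from Theorem \ref{A}, and then controlling the relative commutant using the wreath-like product structure underlying that theorem.

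First I would produce a property (T) II$_1$ factor $P$ containing $P_0$, following the construction in the proof of Theorem \ref{A}. Concretely, set $M := P_0 \bar\otimes R$ where $R$ is the hyperfinite II$_1$ factor; by Lemma \ref{GP98} the factor $M$ is generated by three unitaries, so there exists a homomorphism $\pi_0 : \mathbb F_3 \to \mathcal U(M)$ with $\pi_0(\mathbb F_3)'' = M$. Fix a property (T) group $G \in \mathcal W\mathcal R(\mathbb F_3, B)$ supplied by Theorem \ref{wreath}(1). Applying Proposition \ref{extend} yields a property (T) II$_1$ factor $P = M^B \rtimes_{\beta^{\pi_0}, w^{\pi_0}} B \in \mathcal W\mathcal R(M, B)$ containing $P_0$ via $P_0 \equiv P_0 \bar\otimes 1 \subset M = M^e \subset M^B \subset P$. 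Then, appealing to \cite[Theorem 2.18]{Go20} since $P$ has property (T) and $Q$ is infinitely generic, I obtain an embedding $\tilde\pi : P \hookrightarrow Q^\omega$ with $\mathcal C := \tilde\pi(P)' \cap Q^\omega$ a II$_1$ factor. Set $\pi := \tilde\pi|_{P_0}$ and $\mathcal N := \pi(P_0)' \cap Q^\omega$.

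The remaining and decisive step is to show $\mathcal N$ is a factor. Observe that $\mathcal N$ contains two commuting subalgebras: $\mathcal C$ (a factor by Goldbring's theorem) and $\tilde\pi(P_0' \cap P)$. For the second, I would first verify $P_0' \cap P$ is itself a factor. A Fourier argument as in the proof of Lemma \ref{factor}, exploiting the outerness of the $B$-action on $M^B$, reduces this to computing $P_0' \cap M^B$, which equals $R \bar\otimes M^{B \setminus \{e\}}$ by the tensor factorization $M = P_0 \bar\otimes R$ and hence is a factor. The key remaining identity to establish is
\[
\mathcal N \;=\; \tilde\pi(P_0' \cap P) \vee \mathcal C,
\]
after which factoriality of $\mathcal N$ follows since its two commuting factorial generators sit in ``generic'' position inside $Q^\omega$ (their intersection collapses to $\mathbb C$ because $\tilde\pi(P_0' \cap P) \subset \tilde\pi(P)$ commutes with $\mathcal C$).

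The main obstacle is proving this last decomposition, and it is here that the hypothesis ``$P_0' \cap P_0^\omega$ is a factor'' must be used essentially: without it, central sequences of $P_0$ in $Q^\omega$ could contribute central elements to $\mathcal N$ that are not absorbed by either $\mathcal C$ or $\tilde\pi(P_0' \cap P)$. I would approach this via a type-realization argument in continuous model theory: since $Q$ is infinitely generic, every consistent type over $\pi(P_0) \cup \mathcal C$ is realized in $Q^\omega$, and the hypothesis that $P_0' \cap P_0^\omega$ is a factor ensures that the type corresponding to a hypothetical element of $Z(\mathcal N)$ not lying in $\tilde\pi(P_0' \cap P) \vee \mathcal C$ is inconsistent. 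Alternatively, one could attempt a direct ultrapower computation: diagonalizing the embedding $P_0 \hookrightarrow Q$ to an inclusion $P_0^\omega \hookrightarrow Q^\omega$ yields an embedding $P_0' \cap P_0^\omega \hookrightarrow \mathcal N$, and one would leverage this together with the factoriality granted by Goldbring's theorem on $P$ to transfer factoriality from $P_0' \cap P_0^\omega$ to $\mathcal N$.
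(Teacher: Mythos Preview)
Your proposal has a genuine gap at the decisive step: the claimed decomposition $\mathcal N = \tilde\pi(P_0' \cap P) \vee \mathcal C$ is not established, and there is no reason to expect it. For a chain $P_0 \subset P \subset Q^\omega$, the relative commutant $P_0' \cap Q^\omega$ is typically much larger than the join of $P_0' \cap P$ and $P' \cap Q^\omega$; elements of $Q^\omega$ commuting with $P_0$ can be entangled with $P$ in ways captured by neither piece. Your two suggested remedies---a type-realization argument and a direct ultrapower computation---are left vague, and neither makes essential contact with the wreath-like structure of $P$ or with the hypothesis on $P_0' \cap P_0^\omega$ in a way that would force such a splitting. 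Even granting the decomposition, the inference ``two commuting factors with trivial intersection generate a factor'' is not automatic either.

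The paper avoids describing $\mathcal N$ altogether. It uses the \emph{diagonal} embedding $P_0 \subset Q \subset Q^\omega$, with $Q \in \mathcal G$ chosen to contain the property (T) factor $N = (P_0 \bar\otimes R)^B \rtimes B$ (this is legitimate by \cite[Lemma 2.12]{Go20}). Property (T) of $N$ gives $w$-spectral gap, so $N' \cap Q^\omega = (N' \cap Q)^\omega$; existential closedness of $Q$ gives $(N' \cap Q)' \cap Q = N$; hence $(N' \cap Q^\omega)' \cap Q^\omega = N^\omega$. Now one bounds the \emph{bicommutant} of $P_0$ rather than its commutant: since $P_0' \cap Q^\omega$ contains both $N' \cap Q^\omega$ and $\bigl(R \,\bar\otimes\, (P_0 \bar\otimes R)^{B\setminus\{e\}}\bigr)^\omega$, taking commutants and using Remark~\ref{remark.relative.commutant} gives $(P_0' \cap Q^\omega)' \cap Q^\omega \subset P_0^\omega$. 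Thus $\mathcal Z(P_0' \cap Q^\omega) \subset P_0' \cap P_0^\omega$, and the hypothesis finishes. The idea you are missing is to control $\mathcal Z(\mathcal N)$ via the bicommutant rather than via a tensor-like splitting of $\mathcal N$; this is exactly where the diagonal embedding (not an arbitrary embedding supplied by \cite[Theorem 2.18]{Go20}) and spectral gap enter.
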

{\it Proof.} 
By \cite[Lemma 2.12]{Go20}, given $Q_1,Q_2 \in \mathcal G$, a separable II$_1$ factor $M$ admits an embedding in $Q^\omega_1$ with factorial relative commutant if and only if $M$ admits an embedding in $Q^\omega_2$ with factorial relative commutant.
Thus, we only need to show that we can find $Q\in \mathcal G$ such that $P_0\subset Q$ and the diagonal embedding $P_0 \subset  Q^\omega$ satisfies that  $P_0'\cap Q^\omega$ is a factor.

Let $P =P_0\overline \otimes R$. By the proof of Theorem
\ref{A} there exists a II$_1$ factor $N= P^B \rtimes_{\alpha,v}B \in \mathcal W\mathcal R(P, B)$ with property (T),  where $B$ is an icc hyperbolic group. 

As $\mathcal G$ is embedding universal, one can find $Q \in \mathcal G$ with $N\subset Q$. Since $N$ has property (T), it has $w$-spectral gap in the sense of \cite{Po09} in any extension (in fact this characterizes property (T), see \cite{Ta22}) and thus \ $N'\cap Q^\omega= (N'\cap Q)^\omega$. As $Q$ is existentially closed by \cite[Proposition 5.16]{Go18} we also have $(N'\cap Q )'\cap Q=N$. Altogether, these facts imply  

\begin{equation}\label{sgap}(N'\cap Q^\omega)'\cap Q^\omega= ((N'\cap Q)^\omega)'\cap Q^\omega= ((N'\cap Q )'\cap Q)^\omega=N^\omega.\end{equation}

We identify $P^e$ with $P$. 
Since $P_0 \subset P\subset N\subset Q\subset Q^\omega$, passing to relative commutants gives that $P_0'\cap Q^\omega \supset (N'\cap Q^\omega) \vee (P^{B\setminus \{e\}}\overline \otimes R)^\omega$. Taking relative commutants again and using \eqref{sgap} we get

\begin{equation*} \begin{split}(P_0'\cap Q^\omega)'\cap Q^\omega \subset ((N'\cap Q^\omega) \vee (P^{B\setminus \{e\}}\overline \otimes R)^\omega)'\cap Q^\omega &=(N'\cap Q^\omega)'\cap Q^\omega \cap ((P^{B\setminus \{e\}} \overline \otimes R)^\omega)'\\& = (N \cap (P^{B\setminus \{e\}}\overline \otimes R)')^\omega.\end{split}\end{equation*}
Since  $N \cap (P^{B\setminus \{e\}}\overline \otimes R)'=P_0$ (see Remark \ref{remark.relative.commutant}), we deduce that  $(P_0'\cap Q^\omega)'\cap Q^\omega \subset P_0^\omega$ and thus $ \mathcal Z(P'_0\cap Q^\omega)\subset P_0'\cap P_0^\omega\subset P_0'\cap Q^\omega$. This entails that $\mathcal Z(P_0'\cap Q^\omega)\subset\mathcal Z(P_0'\cap P_0^\omega)$ and thus  $\mathcal Z(P'_0\cap Q^\omega)=\mathbb C1$, which finishes the proof. \hfill$\blacksquare$
\vskip 0.1in

\subsection{Super McDuff II$_1$ factors} 

A II$_1$ factor $M$ is called \emph{super McDuff} if its central sequence von Neumann algebra, $M'\cap M^\omega$, is a II$_1$ factor. Examples of super McDuff factors include the hyperfinite  II$_1$ factor, $R$, the tensor product  $N \overline{\otimes } R$, where $N$ is a full II$_1$ factor, and the infinite tensor product $\overline {\otimes}_{n\in \mathbb N} N_n$, where $N_n$ are full II$_1$ factors (see \cite[Section 6]{AGKE20}).   In the context of studying super McDuff factors from a model theoretic perspective, it was asked in \cite[Question 6.3.3]{AGKE20} whether there exists an existentially closed II$_1$ factor that is super McDuff. 

A separable II$_1$ factor $M$ is called {\it existentially closed} if for any separable tracial von Neumann algebra $N$ containing $M$, there is an embedding $\pi:N\hookrightarrow M^\omega$ whose restriction to $M$ is the diagonal embedding $M\hookrightarrow M^\omega$ \cite{FGHS16}. By \cite[Proposition 5.11]{FGHS16}, infinitely generic II$_1$ factors are existentially closed.

In this subsection, we provide a positive answer to \cite[Question 6.3.3]{AGKE20}  by showing that all infinitely generic II$_1$ factors are super McDuff (see Theorem \ref{igsMc}). To establish this, we first use Theorem \ref{A} to show the class of infinitely generic II$_1$ factors that are super McDuff is embedding universal. 

\begin{proposition}\label{sMc}
Any separable II$_1$ factor $P$ embeds into an infinitely generic II$_1$ factor $M$  which is super McDuff.
\end{proposition}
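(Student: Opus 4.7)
The plan is to combine Theorem \ref{A} with the commutant-computation strategy used in the proof of Theorem \ref{Go20}. More precisely, the idea is to embed $P$ into a property (T) factor $N$ with a wreath-like decomposition (via Theorem \ref{A}), then embed $N$ into some $M\in\mathcal{G}$ via embedding universality, and finally show $M$ itself is super McDuff by identifying commutants inside $M^\omega$.

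\textbf{Construction.} Set $\widetilde P=P\overline{\otimes}R$. Following the proof of Theorem \ref{A}, obtain a II$_1$ factor
$N\in\mathcal{W}\mathcal{R}(\widetilde P,B)$, $N=\widetilde P^B\rtimes_{\beta,w} B$,
with property (T), where $B$ is an icc hyperbolic group with property (T) (as provided by Theorem \ref{wreath}(1)). In particular, $P\subseteq\widetilde P=\widetilde P^e\subseteq N$. By embedding universality of $\mathcal{G}$ (Proposition \ref{infgen}(1)), pick $M\in\mathcal{G}$ with $N\subseteq M$; it remains to show that $M'\cap M^\omega$ is a factor.

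\textbf{Main computation.} Since $N$ has property (T), spectral gap gives $N'\cap M^\omega=(N'\cap M)^\omega$, and in particular $M'\cap M^\omega\subseteq (N'\cap M)^\omega$. Since every infinitely generic factor is existentially closed (as recalled in the proof of Theorem \ref{Go20}), one has $(N'\cap M)'\cap M=N$, hence
\[
\bigl((N'\cap M)^\omega\bigr)'\cap M^\omega=\bigl((N'\cap M)'\cap M\bigr)^\omega=N^\omega.
\]
The heart of the argument is to establish the containment
\[
(M'\cap M^\omega)'\cap M^\omega \subseteq N^\omega.
\]
Granting this, any $z\in Z(M'\cap M^\omega)$ lies in $M'\cap M^\omega\subseteq (N'\cap M)^\omega$ and also, since it commutes with $M'\cap M^\omega$, in $(M'\cap M^\omega)'\cap M^\omega\subseteq N^\omega$. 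Therefore
\[
z\in (N'\cap M)^\omega\cap N^\omega=\bigl((N'\cap M)\cap N\bigr)^\omega=(N'\cap N)^\omega=\mathbb C,
\]
using that $N$ is a factor. Hence $Z(M'\cap M^\omega)=\mathbb C$ and $M$ is super McDuff.

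\textbf{Main obstacle.} The key technical step is the containment $(M'\cap M^\omega)'\cap M^\omega \subseteq N^\omega$, which amounts to showing that $M'\cap M^\omega$ generates $(N'\cap M)^\omega$ as a von Neumann algebra in $M^\omega$ up to commutants. To accomplish this one would exploit the wreath-like decomposition $N=\widetilde P^B\rtimes B$: the shift action of the infinite group $B$ produces many independent commuting copies $\widetilde P^b$ of $\widetilde P$ inside $N\subset M$, while the hyperfinite factor $R$ sitting in each $\widetilde P^b=P\overline{\otimes}R$ supplies an abundance of asymptotically central elements of $M$ localized at the coordinates $b\in B$. The identification of $(M'\cap M^\omega)'\cap M^\omega$ should then proceed in close parallel to the commutant calculation in the proof of Theorem \ref{Go20}, where the intersection with $(\widetilde P^{B\setminus\{e\}})^\omega$ combined with Remark \ref{remark.relative.commutant} is used to cut down to the target algebra.
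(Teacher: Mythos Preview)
Your proof has a genuine gap at the step you yourself flag as the ``main obstacle'': the containment $(M'\cap M^\omega)'\cap M^\omega\subseteq N^\omega$ is in fact \emph{false}. Indeed, the diagonal copy of $M$ is always contained in $(M'\cap M^\omega)'\cap M^\omega$ (elements of $M$ commute with all central sequences), whereas $M\cap N^\omega=N$ inside $M^\omega$ (since $N$ is $\|\cdot\|_2$-closed in $M$). Because every infinitely generic factor is McDuff while $N$ has property~(T) and is therefore full, we have $M\neq N$, hence $M\not\subset N^\omega$ and the desired inclusion cannot hold. The analogy with the proof of Theorem~\ref{Go20} breaks down for a structural reason: there one computes the commutant of $P_0'\cap Q^\omega$ for a \emph{small, explicitly placed} subfactor $P_0\subset N$, and one enlarges $P_0'\cap Q^\omega$ by adjoining $(\widetilde P^{B\setminus\{e\}}\overline\otimes R)^\omega$, which visibly commutes with $P_0$. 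In your situation you would need elements that are asymptotically central in \emph{all of} $M$, and the copies of $R$ sitting inside $\widetilde P^b\subset N\subset M$ have no reason whatsoever to be central in the ambient, essentially arbitrary, factor $M\in\mathcal G$.

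The paper circumvents this by \emph{not} choosing an arbitrary $M\in\mathcal G$ over a single $N$. Instead it builds an interleaved tower
\[
P\subset Q_1\subset N_1\subset Q_2\subset N_2\subset\cdots
\]
with $Q_n\in\mathcal G$ and $N_n$ property~(T), and takes $M$ to be the inductive limit. Then $M\in\mathcal G$ as an increasing union of the $Q_n$ (via \cite[Lemma~6.3.16]{AGKE20}), and---crucially---$M$ is simultaneously exhausted by the property~(T) subfactors $N_n$. For each fixed $n$ the computation you already know (spectral gap plus $(N_n'\cap M)'\cap M=N_n$ from existential closedness) shows that $N_n'\cap M^\omega$ is a factor; since $M'\cap M^\omega=\bigcap_n(N_n'\cap M^\omega)$, one then invokes \cite[Proposition~6.3.12]{AGKE20} to conclude that $M'\cap M^\omega$ is a factor. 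The idea your argument is missing is precisely this inductive-limit construction, which guarantees that $M$ is exhausted by property~(T) subfactors rather than merely containing one.
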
  
\begin{proof} Pick $Q_1\in \mathcal G $ such that $P\subset Q_1$. By Theorem \ref{A} there exists a property (T) II$_1$ factor $N_1$ such that   $Q_1\subset N_1$. Since $\mathcal G$ is embedding universal there exists a II$_1$ factor $Q_2\in \mathcal G$ such that $N_1 \subset Q_2$. By induction, one can find an increasing  sequence $(N_n)_{n\in \mathbb N} $ of property (T) II$_1$ factors and an increasing sequence $(Q_n)_{n\in \mathbb N}$ of infinitely generic II$_1$ factors satisfying    
\begin{equation}\label{sequence}
    P\subset Q_1\subset N_1\subset Q_2\subset N_2\subset \cdots \subset Q_n\subset N_n\subset \cdots 
\end{equation}
Let $M$ be the inductive limit II$_1$ factor arising from the sequence \eqref{sequence}. Observe that by construction we have $M= \overline {\cup_n N_n}^{\text{wot}}= \overline{\cup_n Q_n}^{\text{wot}}$. Since for all $n$ we have $Q_n\in \mathcal G$, using \cite[Lemma 6.3.16]{AGKE20} it follows that $M\in \mathcal G$.  

In the remaining part we argue that $M$ is super McDuff. 
As $N_n$ has property (T), we have $N_n'\cap M^\omega =(N_n'\cap M)^\omega$ and by \cite[Proposition 5.16]{Go18} we also have $(N_n '\cap M)'\cap M=N_n$. Using these relations  and \cite[Lemma 2.4]{BCI15} we derive that \begin{equation*}\mathcal Z(N'_n\cap M^\omega)=\mathcal Z((N_n'\cap M)^\omega)=(\mathcal Z(N_n'\cap M))^\omega= ( ((N'_n\cap M)'\cap M)\cap N_n' )^\omega= (\mathcal Z (N_n))^\omega=\mathbb C 1.\end{equation*} Thus, $N_n'\cap M^\omega$ is a factor for all $n\in \mathbb N$. Using \cite[Proposition 6.3.12]{AGKE20} we conclude  that $M'\cap M^\omega$ is a factor. \end{proof}

Combining Proposition \ref{sMc} with model theoretic methods from  \cite{GH16} we obtain the following.   

\begin{theorem}\label{igsMc} Any infinitely generic II$_1$ factor is super McDuff.

\end{theorem}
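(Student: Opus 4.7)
The plan is to combine Proposition \ref{sMc} (which guarantees at least one super McDuff infinitely generic II$_1$ factor) with two further ingredients: the elementary equivalence of all elements of $\mathcal G$, and a transfer principle from \cite{GH16} stating that super McDuffness is preserved under elementary equivalence. The first ingredient is immediate from the defining properties of $\mathcal G$: given $Q_1, Q_2 \in \mathcal G$, embedding universality (Proposition \ref{infgen}(1)) places the free product $Q_1 * Q_2$ into some $Q \in \mathcal G$, and by Proposition \ref{infgen}(2) the two inclusions $Q_i \hookrightarrow Q$ are elementary, so $Q_1 \equiv Q \equiv Q_2$. Hence all members of $\mathcal G$ share the same continuous first-order theory.

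Granting the existence of some super McDuff $M_0 \in \mathcal G$ from Proposition \ref{sMc}, it would remain to conclude that every other $Q \in \mathcal G$ is super McDuff by transferring the property along the elementary equivalence $M_0 \equiv Q$. I would invoke the axiomatization from \cite{GH16}, where super McDuffness is recognized as a $\forall\exists$-axiomatizable property in the continuous first-order logic of tracial von Neumann algebras. The underlying idea is that factoriality of $M' \cap M^\omega$ translates, via $\aleph_1$-saturation of the ultrapower, into an approximate scheme of the following form: for every finite $F \subset M$ and every $\epsilon > 0$, any two projections of equal trace that $\epsilon$-commute with $F$ are almost conjugate by a partial isometry which also $\delta$-commutes with $F$ (for some $\delta = \delta(F,\epsilon)$). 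Such a scheme depends only on $\mathrm{Th}(M)$, so from $M_0 \equiv Q$ one would deduce that $Q$ inherits super McDuffness, completing the argument.

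The main obstacle is precisely this transfer step. Super McDuffness is ostensibly a property of the central sequence algebra $M' \cap M^\omega$ rather than of $M$ itself, so its invariance under elementary equivalence is not a formal consequence of the definitions and requires model-theoretic work. This is exactly the content supplied by \cite{GH16}; once it is in hand, the remaining two ingredients (the elementary equivalence of members of $\mathcal G$, and the existence of a super McDuff element via Proposition \ref{sMc}) are immediate, and the conclusion that \emph{every} infinitely generic II$_1$ factor is super McDuff drops out.
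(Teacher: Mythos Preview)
Your proposal is correct and follows essentially the same strategy as the paper: produce a super McDuff member of $\mathcal G$ via Proposition \ref{sMc}, then transfer super McDuffness using \cite{GH16}. The paper streamlines your argument by applying Proposition \ref{sMc} directly to the given $Q$ (yielding an elementary embedding $Q\hookrightarrow P$ with $P\in\mathcal G$ super McDuff, so your free-product detour is unnecessary), and it invokes \cite[Proposition 4.12]{GH16} with the explicit hypothesis that $Q$ is McDuff---automatic here since infinitely generic factors are existentially closed---rather than claiming that super McDuffness is outright $\forall\exists$-axiomatizable.
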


\begin{proof} 
Let $Q$ be an infinitely generic II$_1$ factor. By Proposition \ref{sMc} there is a super McDuff,  infinitely generic II$_1$ factor $P$ together with an embedding  $\pi:Q\hookrightarrow P$. By Proposition \ref{infgen}(2), the embedding $\pi$ is elementary. Since $P$ is super McDuff and $Q$ is McDuff,  \cite[Proposition 4.12]{GH16} implies that $Q$ is super McDuff.
 \end{proof}







\begin{thebibliography}{ABC99}
\bibitem[AGKE20]{AGKE20}S. Atkinson, I. Goldbring, S. Kunawalkam-Elyavalli: {\it Factorial relative commutants and the generalized Jung property for II$_1$ factors}, Adv. Math. {\bf 396} (2022), Paper No. 108107, 53 pp.

\bibitem[BCI15]{BCI15} R. Boutonnet, I. Chifan, A. Ioana: {\it II$_1$ factors with nonisomorphic ultrapowers}, Duke Math. J. {\bf 166} (2017), no. 11, 2023-2051.

\bibitem[Be06]{Be06} B. Bekka: {\it Operator-algebraic superridigity for SL$_n(\mathbb Z)$, $n \geq 3$}, Invent. Math. {\bf 169} (2007), 401-425

\bibitem[BH19]{BH19} R. Boutonnet, C. Houdayer: {\it Stationary characters on lattices of semisimple Lie groups}, Publ. Math. Inst. Hautes \'{E}tudes Sci. {\bf 133} (2021), 1-46. 

\bibitem[Br82]{Br82} K.S. Brown: {\it Cohomology of groups}, Graduate Texts in Mathematics, 87. Springer-Verlag, New York-Berlin, 1982. x+306 pp.

\bibitem[CDHK20]{CDHK20} I. Chifan, S. Das, C. Houdayer, K. Khan: {\it Examples of property (T) factors with trivial
fundamental group}, to appear in Amer. J. Math., arXiv:2003.08857.

\bibitem[CIOS21]{CIOS21} I. Chifan, A. Ioana, D. Osin, B. Sun: {\it Wreath-like product groups and rigidity of their von Neumann algebras}, preprint arXiv:2111.04708.

\bibitem[CJ85]{CJ85} A. Connes, V. F. R. Jones: {\it Property (T) for von Neumann algebras}, Bull. Lond. Math. Soc. {\bf 17}
(1985), 57-62.

\bibitem[Co76]{Co76} A. Connes: {\it Classification of injective factors}, Ann. Math. {\bf 104} (1976) 73-115.

\bibitem[Co80]{Co80} A. Connes: {\it A factor of type II$_1$ with countable fundamental group}, J. Operator Theory
{\bf 4} (1980), no. 1, 151-153.

\bibitem[De96]{De96} T. Delzant: {\it Sous-groupes distingu\'{e}s et quotients des groupes hyperboliques}. (French) (Distinguished subgroups and quotients of hyperbolic groups), Duke Math. J. {\bf 83} (1996), no. 3, 661-682.


\bibitem[DGO11]{DGO11} F. Dahmani, V. Guirardel, D. Osin: {\it Hyperbolically embedded subgroups and rotating families in groups acting on hyperbolic spaces}, Memoirs Amer. Math. Soc. {\bf 245} (2017), no. 1156.

\bibitem[DL69]{DL69} J. Dixmier, E. C. Lance: {\it Deux nouveaux facteurs de type II$_1$}, Invent. Math. {\bf 7} (1969),
226-234.


\bibitem[FGHS16]{FGHS16} I. Farah, I. Goldbring, B. Hart, and D. Sherman: {\it Existentially closed II$_1$ factors}, Fund. Math. {\bf 233} (2016), 173-196.

\bibitem[FM77]{FM77} J. Feldman, C.C. Moore: {\it Ergodic Equivalence Relations, Cohomology, and Von Neumann Algebras}, I, II. Trans. Amer. Math. Soc. {\bf 234} (1977), 289-324, 325-359.


\bibitem[GH16]{GH16} I. Goldbring, B. Hart: {\it On the theories of McDuff's II$_1$ factors}, Int. Math. Res. Not. IMRN 2017, no. 18, 5609-5628

\bibitem[Go18]{Go18} I. Goldbring: {\it Spectral gap and definability,} to appear in Beyond First-order Model Theory, Volume 2, preprint arXiv:1805.02752.


\bibitem[Go20]{Go20} I. Goldbring: {\it On Popa's factorial commutant theorem,} Proc. Amer. Math. Soc. {\bf 148} (2020), 5007-5012.


\bibitem[GP98]{GP98} L. Ge, S. Popa: {\it On some decomposition properties for factors of type II$_1$}, Duke Math. J. {\bf 94} (1998), no. 1, 79-101.


\bibitem[HJKE21]{HJKE21} B. Hayes, D. Jekel, S. Kunnawalkam Elayavalli: {\it Property (T) and strong 1-boundedness for von Neumann algebras}, arXiv:2107.03278.



\bibitem[Io12]{Io12} A. Ioana: {\it Cartan subalgebras of amalgamated free product II$_1$ factors}, With an appendix by Adrian Ioana
and Stefaan Vaes, Ann. Sci. \'{E}c. Norm. Sup\'{e}r. (4) {\bf 48} (2015), no. 1, 71-130.

\bibitem[Io18]{Io18} A. Ioana: {\it Rigidity for von Neumann algebras}, Proc. Int. Cong. of Math. 2018 Rio de Janeiro, Vol. 2 (1635-1668).

\bibitem[IPP05]{IPP05} A. Ioana, J. Peterson, S. Popa: {\it Amalgamated free products of weakly rigid factors and
calculation of their symmetry groups}, Acta Math. {\bf 200} (2008), no. 1, 85-153.


\bibitem[JNVWY20]{JNVWY20}  Z. Ji, A. Natarajan, T. Vidick, J. Wright, H. Yuen: {\it MIP*=RE}, preprint arXiv:2001.04383.

\bibitem[Ka67]{Ka67} D.A. Kazhdan: {\it Connection of the dual space of a group with the structure of its closed
subgroups}, Functional Analysis and Application {\bf 1} (1967), 63-65.

\bibitem[KV15]{KV15} A. S. Krogager, S. Vaes: {\it A class of II$_1$ factors with exactly two group measure space
decompositions}, J. Math. Pures Appl. {\bf 108} (2017), no. 1, 88-110.

\bibitem[MvN43]{MvN43} F. J. Murray, J. von Neumann: {\it Rings of operators} IV. Ann. of Math. {\bf 44} (1943), 716-808.

\bibitem[Ol95]{Ol95} A. Yu. Olshanskii: {\it SQ-universality of hyperbolic groups}. (Russian) Mat. Sb. {\bf 186} (1995), no. 8, 119-132; translation in Sb. Math. {\bf 186} (1995), no. 8, 1199-1211.

\bibitem[Oz02]{Oz02} N. Ozawa: {\it There is no separable universal II$_1$ factor}, Proc. Amer. Math. Soc., {\bf 132} (2004), 487-490.

\bibitem[Pe06]{Pe06} J. Peterson: {\it L$^2$-rigidity in von Neumann algebras}, Invent. Math. {\bf 175} (2009), no. 2, 417-433.

\bibitem[Pe14]{Pe14} J. Peterson: {\it Character rigidity for lattices in higher-rank groups}, preprint 2014.

\bibitem[Po86]{Po86} S. Popa: {\it Correspondences}, INCREST Preprint, 56/1986.

\bibitem[Po01]{Po01} S. Popa: {\it On a class of type II$_1$ factors with Betti numbers invariants}, Ann. of Math. {\bf 163}
(2006), no. 3, 809-899.

\bibitem[Po06]{Po06} S. Popa: {\it Deformation and rigidity for group actions and von Neumann algebras}, International
Congress of Mathematicians. Vol. I, 445-477, Eur. Math. Soc., Z\"{u}rich, 2007.

\bibitem[Po09]{Po09} S. Popa: {\it On the classification of inductive limits of II$_1$ factors with spectral gap}, Trans. Amer. Math. Soc. {\bf 364} (2012), no. 6, 2987–3000.


\bibitem[Po13]{Po13} S. Popa: {\it Independence properties in subalgebras of ultraproduct II$_1$ factors}, J. Funct. Anal. {\bf 266} (2014), 5818-5846.

\bibitem[PV12]{PV12} S. Popa, S. Vaes: {\it Unique Cartan decomposition for II$_1$ factors arising from arbitrary actions of
hyperbolic groups}, J. Reine Angew. Math. {\bf 694} (2014), 215-239.


\bibitem[PV21]{PV21} S. Popa, S. Vaes: {\it W$^*$-rigidity paradigms for embeddings of II$_1$ factors}, preprint arXiv:2102.01664. 

\bibitem[Va10]{Va10} S. Vaes: {\it Rigidity for von Neumann algebras and their invariants}, Proceedings of the ICM (Hyderabad, India, 2010), Vol. III, Hindustan Book Agency (2010), 1624-1650.

\bibitem[Ta22]{Ta22} H. Tan: {\it Spectral gap characterizations of property (T) for II$_1$ factors}, preprint arXiv:2202.06089.

\bibitem[Zi81]{Zi81} R. Zimmer: {\it On the cohomology of ergodic actions of semisimple Lie groups and discrete subgroups}, Amer.
J. Math. {\bf 103} (1981), no. 5, 937-951.

\end{thebibliography}
\end{document}